\documentclass[11pt]{amsart}
 
\usepackage{tikz}
\usetikzlibrary{cd}
\usetikzlibrary{decorations.pathreplacing,decorations.markings}

\usepackage{amsfonts,amssymb,amsmath}

\usepackage{pinlabel,url}
\usepackage[all]{xy}
\usepackage{tikz}
\usepackage{graphicx}
\usepackage{xcolor}
\usepackage{color}
\usepackage[normalem]{ulem}
\usepackage[colorlinks,citecolor=blue]{hyperref}
\usepackage[sorting=nty,style=alphabetic,url=false,maxnames=100]{biblatex}
\usepackage[nameinlink]{cleveref}

\theoremstyle{theorem}
\newtheorem{theorem}{Theorem}[section]

\newtheorem{lemma}[theorem]{Lemma}
\newtheorem{proposition}[theorem]{Proposition}
\newtheorem{corollary}[theorem]{Corollary}
\newtheorem{claim}[theorem]{Claim}

\theoremstyle{definition}

\newtheorem*{remark}{Remark}

\newtheorem{definition}[theorem]{Definition}

\newtheorem{question}[theorem]{Question}

\newtheorem{innercustomthm}{Theorem}
\newenvironment{customthm}[1]
  {\renewcommand\theinnercustomthm{#1}\innercustomthm}
  {\endinnercustomthm}

\newtheorem{innercustomcor}{Corollary}
\newenvironment{customcor}[1]
  {\renewcommand\theinnercustomcor{#1}\innercustomcor}
  {\endinnercustomcor}

\numberwithin{equation}{section}

\newcommand{\Mod}{{\rm Mod}}
\newcommand{\PMod}{{\rm PMod}}

\newcommand{\Map}{{\rm Map}}
\newcommand{\PMap}{{\rm PMap}}

\newcommand{\T}{\mathcal{T}}

\newcommand{\CL}{\mathcal{L}}

\newcommand{\hyp}{{\mathbb{H}}}
\newcommand{\SH}{{\mathcal{SH}}}
\newcommand{\id}{{\rm id}}

\newcommand{\cF}{{\mathcal F}}

\newcommand{\lcm}{\rm{lcm}}
\newcommand{\N}{\mathbb{N}}
\newcommand{\esup}{{\rm esssup}}
\newcommand{\R}{\mathbb{R}}

\newcommand{\Bel}{{\rm Bel}}
\newcommand{\SHo}{{{\mathcal H}}}
\newcommand{\PSHo}{\mathcal{PH}}

\newcommand{\Imag}{{\rm{Im}}}
\newcommand{\E}{\mathcal{E}}

\newcommand{\cR}{\mathcal{R}}
\newcommand{\inte}{\operatorname{int}}

\renewcommand{\Im}{\operatorname{Im}}

\newcounter{ccomments}

\newcounter{gcomments}

\newcounter{jcomments}

\title[WP Geometry and end-periodic surfaces]{Asymptotically conformal and asymptotically rigid mapping class groups}

\author[J. Aramayona]{Javier Aramayona}
\address{Javier Aramayona: Instituto de Ciencias Matem\'aticas, ICMAT (CSIC-UAM-UC3M-UCM). Nicol\'as Cabrera, 13--15. 28049, Madrid, Spain}
\email{javier.aramayona@icmat.es}
\author[G. Domat]{George Domat}
\address{George Domat: Department of Mathematics, University of Michigan. 530 Church Street, Ann Arbor, MI 48109-1043, USA}
\email{domatg@umich.edu}
\author[C. J. Leininger]{Christopher J Leininger}
\address{Christopher J. Leininger: Department of Mathematics, University of Notre Dame, 255 Hurley Bldg
Notre Dame, IN 46556 USA}
\email{cleining@nd.edu}
\thanks{
JA was supported by grant PID2024--155800NB--C31, and acknowledges financial support from the Spanish Ministry of Science and Innovation, through the ``Severo Ochoa Programme for Centres of Excellence in R\&D (CEX2023-001347-S)”. GD  was partially supported by NSF grant DMS–-2303262. CJL was supported by NSF grants DMS--2305286 and DMS--2639932, and DMS--1928930 while in residence at the Simons Laufer Mathematical Sciences Institute in Berkeley, California, during the Spring 2026 Semester.
}

\date{\today}

\begin{document}

\begin{abstract}
We give conditions ensuring that an asymptotically rigid mapping class group, specifically a surface Houghton group $\mathcal H (S)$, has finite index in the asymptotically conformal modular group $\Mod_0(X)$, where $X$ is a hyperbolic structure on $S$. These include geometric conditions on the pieces of the underlying rigid structure, as well as the existence in $\SHo(S)$ of an end-periodic homeomorphism which is asymptotically conformal. As a consequence, if $S$ has $n\ge 3$ ends, then $\Mod_0(X)$ has type $F_{n-1}$ but not $FP_n$. We also establish analogous results for $L^p$ modular groups.
\end{abstract}
\maketitle

\addtocontents{toc}{\protect\setcounter{tocdepth}{0}}
\section{Introduction}

There are two groups naturally associated to a Riemann surface $X$: the {\em mapping class group} $\Map(X)$, the group of isotopy classes of homeomorphisms of $X$; and the {\em Teichm\"{u}ller modular group} $\Mod(X)$, consisting of those elements of $\Map(X)$ with a quasiconformal representative. When $X$ has finite type (that is, when its fundamental group is finitely generated), the two groups are well-known to coincide; however, when $X$ has infinite topological type, $\Mod(X)$ is an infinite-index subgroup of $\Map(X)$.

The main purpose of this paper is to prove that prescribing the ``behavior at infinity" of elements of $\Map(X)$ and $\Mod(X)$, in a suitable sense in each case — combinatorial for the former, analytic for the latter — often gives rise to commensurable subgroups. Before stating our results, we give an abridged introduction of the groups in question, postponing the formal definitions until \Cref{sec:background}. 

Let $S$ be a surface with finitely many ends, all of them non-planar. A {\em rigid structure} $\mathcal R$ on $S$ is, in essence, a decomposition of $S$ into {\em pieces} homeomorphic to a fixed compact surface $P$, together with a collection of {\em marking homeomorphisms} identifying each piece with $P$. The {\em surface Houghton group} $\mathcal H(S,\mathcal R)$ is the subgroup of $\Map(S)$ whose elements eventually send pieces to pieces {\em rigidly}, that is, compatibly with the markings outside some compact subsurface.

Now endow S with a hyperbolic structure $X$ — in particular, a Riemann surface structure — and let $\Mod_0(X) \le \Mod(X)$ denote the subgroup of asymptotically conformal classes, that is, those admitting a representative whose Beltrami coefficient vanishes at infinity.

\medskip

\noindent{\bf Asymptotically conformal vs asymptotically rigid.}
 Our first theorem asserts that, whenever the pieces of the rigid structure have nice geometry with respect to 
$X$, these two restrictions give rise to essentially the same group.  See \Cref{ssec:bddpieces} for the definition of {\em bounded pieces}.

\begin{customthm}{A}
    \label{thm:mainbddpieces}
    Let $S$ be a surface with finitely many ends, each of which is accumulated by genus. If $\mathcal{R}$ is a rigid structure on $S$, $X$ is a hyperbolic Riemann surface structure on $S$ so that the pieces of $\mathcal{R}$ are bounded, and $\mathcal{H}(S,\mathcal{R}) < \Mod_0(X)$, then $[\Mod_0(X): \mathcal{H}(S,\mathcal{R})] < \infty$. 
\end{customthm}

We note that, while $\mathcal{H}(S,\mathcal{R})$ is countable, $\Mod_0(X)$ is in general uncountable, which shows that some condition on the geometry of the pieces of the rigid structure is necessary in the above theorem.  

\medskip

We will also prove a {\em $p$-integrable} version of \Cref{thm:mainbddpieces}, for $p\ge 2$. Here, $\Mod^p(X)$ is the subgroup of $\Mod(X)$ whose elements have $p$-integrable Beltrami coefficient. We will show: 

\begin{customthm}{B}
    \label{cor:mainlp}
    Let $S$ be a surface with finitely many ends, each of which is accumulated by genus, $\mathcal{R}$ be a rigid structure on $S$, $X$ a hyperbolic Riemann surface structure on $S$, and $p \geq 2$. If the pieces of $\mathcal{R}$ are bounded in $X$ and $\mathcal{H}(S,\mathcal{R}) < \Mod^p(X)$, then $[\Mod^p(X):\mathcal{H}(S,\mathcal{R})]<\infty$. 
    
\end{customthm}

\medskip

\noindent{\bf Asymptotically conformal vs eventually isometric.} We also will show that a stronger condition on the geometry of $X$ yields the sharper result that asymptotically conformal homeomorphisms are in fact {\em eventual isometries}. We note that in this setting the containment $\mathcal{H}(S,\mathcal{R})<\Mod_0(X)$ is satisfied automatically, and can therefore be dropped from the hypotheses.  See \Cref{sec:compatiblestructures} for the definition of {\em compatible structure}.

\begin{customthm}{C}
    \label{thm:maintight}
    Let $S$ be a surface with finitely many ends, each of which is accumulated by genus. Let $\mathcal{R}$ be a rigid structure on $S$ and $X$ an $\mathcal{R}$-compatible hyperbolic Riemann surface structure on $S$. If $f \in \Mod_0(X)$, then there exists a $K< S$ so that $f$ is an isometry on $S \setminus K$. 
\end{customthm}

\medskip

\noindent{\bf Finiteness properties.} Combining our results with the previous work of \cite{AraBuxKimLei} we can compute the finiteness properties of $\Mod_0(X)$ and $\Mod^p(X)$ in the cases above. In particular, in the cases above these groups are \emph{finitely generated} and once the surface has at least three ends, they are also \emph{finitely presented}. A priori, one would expect $\Mod_0(X)$ and $\Mod^p(X)$ to be uncountable groups in general. Indeed, once either group contains a multi-twist about infinitely many disjoint curves, they are uncountable. 

In fact, Epstein \cite{epstein2000} posed the question of whether the (reduced) modular group can ever by countable for an infinite-type surface. Matsuzaki \cite{Matsuzaki2005} answered this in the affirmative by constructing an example where $\Mod(X)$ is countable for $X$ homeomorphic to the sphere minus a Cantor set. This was then extended to all infinite-type surfaces of finite genus in \cite{NinoHernandez2024}. 

\begin{customcor}{D}
    \label{cor:mainfinitenessprop}
    Let $S$ be the surface with a finite number  $n\ge 2$ of ends, each of which is accumulated by genus, $\mathcal{R}$ be a rigid structure on $S$, and $X$ a hyperbolic Riemann surface structure on $S$. Let $G$ be a finite index subgroup of either $\Mod_0(X)$ or $\Mod^p(X)$ for $p\geq 2$. If the pieces of $\mathcal{R}$ are bounded in $X$ and $\mathcal{H}(S,\mathcal{R}) < G$, then $G$ is of type $F_{n-1}$ but not $FP_n$.
\end{customcor}

\medskip

\noindent{\bf Mapping tori and end-periodic homeomorphisms} As noted in \cite{AraBuxKimLei}, every end periodic homeomorphism represents a mapping class that is conjugate into a surface Houghton group (c.f.~\cite[Corollary 2.9]{EndPeriodic1}).  End-periodic homeomorphisms have a Nielsen-Thurston like classification and arise naturally in the study of depth-one taut foliations and transverse pseudo-Anosov flows \cite{FenleyCT, Fenley-depth-one,CC-book,LandryMinskyTaylor2023}.

The results in \cite{EndPeriodic1,EndPeriodic2} are of particular relevance to this paper. In the finite-type surface setting, Brock \cite{Brock-convex-vol,Brock-mappingtorus-vol} first showed that given a pseudo-Anosov mapping class, $f \in \Map(S)$, the hyperbolic volume of the corresponding mapping torus, $M_f$, is coarsely comparable to the translation length of $f$ acting on either $\T(S)$ equipped with the Weil-Petersson metric or the pants graph of $S$, these spaces being quasi-isometric; see also Agol \cite{Ago03}. Now, when $S$ is an infinite-type surface with finitely many ends, all non-planar, the authors in \cite{EndPeriodic1,EndPeriodic2} prove that given a strongly irreducible, end-periodic mapping class $f \in \Map(S)$, one obtains a similar relationship between the hyperbolic volume of a natural compactification of the mapping torus, $\overline{M_f}$, and the translation length of $f$ on a suitable pants graph for $S$. This begs the following question.

\begin{question}
    Let $S$ be a surface with finitely many ends, all non-planar, and $f \in \Map(S)$ a strongly irreducible, end-periodic mapping class. Can one obtain upper and lower bounds on the hyperbolic volume of a natural compactification of the mapping torus, $\overline{M_f}$, in terms of the Weil-Petersson translation length of $f$ acting on some version of Teichm\"{u}ller space corresponding to $S$?
\end{question}

Since end-periodic homeomorphisms are all conjugate into some surface Houghton group $\SHo(S,\cR)$, we propose interpreting this question in the language of these groups, and the various Teichm\"uller spaces considered in this paper.
In particular, the square--integrable Teichm\"{u}ller space can be equipped with a form of Weil-Petersson metric \cite{Yanagishita2017}. So, \Cref{cor:mainlp} gives a natural setting in which to study the Weil-Petersson translation length of an end-periodic mapping class. 

One potential drawback to applying \Cref{cor:mainlp} is that one must verify that $\SHo(S,\cR)$ acts. However, we show that simply verifying that a single end-periodic mapping class acts is sufficient for the entire surface Houghton group to act, at least up to finite index.

\begin{customthm}{G}\label{thm:mainep}
 Let $S$ be the surface with a finite number  $n\ge 2$ of ends, each of which is accumulated by genus, $\mathcal{R}$ be a rigid structure on $S$, and $X$ a hyperbolic Riemann surface structure on $S$ so that the pieces of $\cR$ are bounded. If $f \in \SHo(S,\cR)$ is an end periodic homeomorphism with either $f \in \Mod_0(X)$ (or $f \in \Mod^p(X)$ for $p\geq 2$),  then $\Gamma =\Mod_0(X) \cap \SHo(S,\cR)$ (resp. $\Mod^p(X) \cap \SHo(S,\cR)$) is finite-index in both $\Mod_0(X)$ (resp. $\Mod^p(X)$) and $\SHo(S,\cR)$. In particular, $\Mod_0(X)$ (resp. $\Mod^p(X)$) has type $F_{n-1}$ and not type $FP_n$.    
\end{customthm}

\noindent{\bf Further questions.} Here we collect a number of additional questions related to $\T_0(X)$, $\Mod_0(X)$, and surface Houghton groups. The first is to find a sharp version of \Cref{thm:mainbddpieces}. 

\begin{question}
    Given a rigid structure $\cR$ on $S$, what are necessary and sufficient conditions on a hyperbolic structure $X$ on $S$ to ensure that $\SHo(S,\cR)$ and $\Mod_0(X)$ are commensurable? In particular, if $\SHo(S,\cR) < \Mod_0(X)$, is $X$ having bounded geometry sufficient?  Or if $X$ has a bounded length pants decomposition?     
\end{question}

We note that some condition on the geometry of $X$ is necessary. By carefully controlling the rate at which certain curves shrink as one moves out an end, one can produce examples where $\SHo(S,\cR)$ acts, but $\Mod_0(X)$ is uncountable. See \Cref{S:unbounded pieces} and \Cref{Q:unbounded pieces} for a concrete example for which we know that $\SHo(S,\cR)$ acts on $\T_0(X)$, but do not know if it is commensurable with $\Mod_0(X)$. 

One can also flip this around and ask the following related question.  Again, the examples in \Cref{S:unbounded pieces} suggest that the answer to this question is likely to be subtle.

\begin{question}
    Suppose that for some rigid structure, $\cR$, and some hyperbolic Riemann surface structure, $X$, $\SHo(S,\cR) < \Mod_0(X)$. Does this force $X$ to have certain geometric features? What if $[\Mod_0(X):\SHo(S,\cR)] < \infty$?
\end{question}

Finally, in this paper we have restricted attention to surfaces with finitely many ends and their associated surface Houghton groups. However, one can consider more exotic surfaces and their corresponding asymptotically rigid mapping class groups. For example, in \cite{ABFPW2024}, the authors consider asymptotically rigid mapping class groups of \emph{Cantor surfaces}, e.g. surfaces with a Cantor set of ends, all non-planar.  Examples of hyperbolic structures $X$ for which the associated asymptotically rigid mapping class groups act were studied by \cite{dFGH2004}.  These examples fail to have finite index; however a variation of asymptotically rigid mapping class groups (for instance, suitable subgroups of the {\em block mapping class groups} with finite local groups from \cite{AACKW2025}) may in fact be finite index in these examples.

\begin{question}
    Let $S$ be a surface with a Cantor set of ends, all non-planar. Does there exist a variant of \Cref{thm:mainbddpieces} for a version of asymptotically rigid mapping class groups of $S$?
\end{question}

\subsection{Outline}

    In \Cref{sec:background} we recall relevant background and fix notation on Teichm\"{u}ller spaces, modular groups, and surface Houghton groups. In \Cref{sec:compatiblestructures} we introduce compatible hyperbolic structures and give the proof of \Cref{thm:maintight}. As a consequence we obtain \Cref{thm:mainbddpieces} in the case when $X$ is a \emph{compatible} hyperbolic surface. In this way, \Cref{thm:maintight} can be seen as something of a ``warm-up'' for the proofs to come. 

    Next, in \Cref{sec:genhypbp} we introduce the notion of $X$ having \emph{bounded pieces} as a weakening of $X$ being a compatible hyperbolic structure. Roughly speaking, $X$ having a compatible structure guarantees that the pieces that make up the rigid structure on $X$ are actually isometric. Meanwhile, bounded pieces weakens this significantly to only require that each piece has bounded diameter in $X$. This section also serves as the main technical machinery of the paper. We prove that while individual pieces in $X$ may have poorly behaved \emph{intrinsic} geometry, they can be enveloped in slightly larger subsurfaces, we call \emph{envelopes}, whose intrinsic geometry and topology is controlled. This is done in \Cref{ssec:envelopes}. Here we also include some additional results that explore the geometry obtained by having bounded pieces. Some of these results are not strictly necessary for the proof of \Cref{thm:mainbddpieces}, but they may prove useful for other applications.
    We then prove \Cref{lem:identity propogates} which allows us to check whether a mapping class is the identity on a neighborhood of an end by reducing to checking that it is the identity on a single piece.

    With the tools of the previous section to hand, we obtain a fairly short proof of \Cref{thm:mainbddpieces} in \Cref{sec:bpandsho}. This section also includes the proof of \Cref{thm:mainep}. Next, in \Cref{sec:lp} and following \cite{Yanagishita2014}, we introduce $L^p$ quasiconformal deformation spaces and their associated $L^p$ modular groups. Making use of some results in \cite{Yanagishita2014} we can upgrade the results of the previous sections in order to obtain \Cref{cor:mainlp}. Finally in \Cref{sec:examples} we give a menagerie of examples of hyperbolic surfaces in order to elucidate the different conditions we require of hyperbolic structures in \Cref{sec:compatiblestructures} and \Cref{sec:genhypbp}. We also give an example in \Cref{S:unbounded pieces} of a hyperbolic structure for which $\SHo(S,\cR)<\Mod_0(X)$ and yet we do not know if the conclusion of \Cref{thm:mainbddpieces} holds. 

\subsection*{Acknowledgements}

GD thanks Evangelos Nikitopoulos for patiently answering his complex analysis questions. GD and CL thank Ken Bromberg for helpful conversations on Teichm\"{u}ller geometry. 



\section{Background}\label{sec:background}

In this section, we provide a brief overview of the necessary background. Throughout this article, by a {\em (topological) surface} we mean a connected, second-countable, orientable 2-manifold, possibly with non-empty boundary, which consists of a finite number of circles. We recall (see \cite{AramayonaVlamis}, for instance) that the homeomorphism type of a surface is determined by its genus, number of boundary components, and the topology of its \emph{space of ends}. We will always assume that $S$ is a surface with a finite number $n$ of ends, each of which is accumulated by genus. 

\subsection{Teichm\"uller space}\label{ssec:teich}

There are many resources on the basics of Teichm\"{u}ller space, but we direct the interested reader to the books of Gardiner-Lakic \cite{GardLak} and Nag \cite{Nag1988} as they, in general, do not assume that surfaces are of finite type. Suppose $X = \hyp/\Gamma$ is a hyperbolic Riemann surface structure on a topological surface $S$, where $\Gamma$ is a Fuchsian group acting on the upper half-plane, $\hyp$.  More precisely, in this we implicitly assume that there is specific homeomorphism $S \cong X$ which we use to transfer the structure to $S$.  We may also simply refer to the hyperbolic Riemann surface $X$ when the specified homeomorphism with $S$ is unimportant.

We will consider $X$ as both a Riemann surface structure and a metric surface with its canonical hyperbolic metric; anytime we make reference to a metric on $X$ we are referring to this hyperbolic metric. Let $\CL^{\infty}(X)$ be the space of measurable $(-1,1)$-differentials on $X$ with finite $\CL^{\infty}$-norm. That is,
\begin{align*}
    ||\mu||_{\infty} = \esup_{z \in X} |\mu(z)| < \infty.
\end{align*}
We could equally well work with $\CL^\infty(\hyp,\Gamma)$, the space of $\Gamma$--invariant $(-1,1)$-differentials on $\hyp$; indeed, there is a canonical isomorphism $\CL^\infty(X) \to \CL^\infty(\hyp,\Gamma)$, by pull-back, and we often pass back and forth between these (and all other spaces of differentials) in this way.
 
We use $\Bel(X)  \cong \Bel(\hyp,\Gamma)$ to denote the space of {\em Beltrami coefficients} on $X$ (or measurable Beltrami coefficients for $\Gamma$); that is, the open unit ball in $\CL^{\infty}(X)$.  The Measurable Riemann Mapping Theorem implies that for every $\mu \in \Bel(\hyp,\Gamma)$, there is a quasiconformal self map $\widetilde f^\mu$ of $\hyp$ fixing $0$, $1$, and $\infty$, with Beltrami coefficient $\mu$ that conjugates $\Gamma$ to another Fuchsian group, $\Gamma^\mu$.  The map $\widetilde f^\mu$ descends to a quasiconformal map
\[ f^\mu \colon X \to X^\mu = \hyp/\Gamma^\mu,\]
with Beltrami coefficient $\mu \in \Bel(X)$.  In general, given a quasiconformal map $f \colon X \to X'$, we write $\mu_f$ for its Beltrami coefficient.

We assume $X$ is of the first kind (that is, the limit set of $\Gamma$ is the entire circle at infinity), and say two Beltrami differentials $\mu,\nu \in \Bel(X)$ are \emph{Teichm\"{u}ller equivalent} if the associated quasiconformal maps of $\hyp$ have extensions that satisfy $\widetilde f^{\mu}\vert_{\R}=\widetilde f^{\nu}\vert_{\R}$. The \emph{Teichm\"{u}ller space} (or \emph{quasi-conformal deformation space}) of $X$, denoted $\T(X)$, is the space of Teichm\"{u}ller equivalence classes of Beltrami coefficients.  Alternatively, we can consider points in $\T(X)$ as equivalence classes of quasi-conformal homeomorphism $f \colon X \to X'$ where $f \colon X \to X'$ is equivalent to $h \colon X \to X''$ if $fh^{-1} \colon X'' \to X'$ is homotopic to a conformal map.  The association $[\mu] \mapsto [f^\mu \colon X \to X^\mu]$ provides the required identification between these two perspectives.  See, for example, \cite[Section 2.4]{GardLak} for more details.  We write $[\mu] = [f^\mu \colon X \to X^\mu] = [f^\mu,X^\mu]$ to represent the same point, in different contexts.

If $X$ is a hyperbolic Riemann surface and $\gamma$ a closed curve on $X$ we let $\ell_X(\gamma)$ denote the length of the geodesic representative of the homotopy class of $\gamma$ in the hyperbolic metric on $X$. This is an invariant of the homotopy class of $\gamma$. 

\begin{definition} If $f \colon X \to X'$ is a homeomorphism and $L \geq 1$, we say that $f$ is {\em $L$--tight}, if for all closed geodesics $\gamma$ in $X$, we have
\[ \frac{1}{L} \leq \frac{\ell_{X'}(f(\gamma))}{\ell_X(\gamma)}\leq L.
\]
We write $\ell_Y(f(\gamma)) \stackrel{L}{\asymp} \ell_X(\gamma)$ to mean that both inequalities above hold.
\end{definition}

We will make ample use of the following lemma due to Wolpert \cite[Lemma 3.1]{Wolpert1979}, which allows one to relate quasiconformality and hyperbolic lengths of curves. 
\begin{lemma}\cite[Lemma 3.1]{Wolpert1979}\label{lem:wolpert}
    Let $X$ and $X'$ be hyperbolic Riemann surfaces. If $f:X \rightarrow X'$ is $K$-quasiconformal, then it is $K$--tight.
\end{lemma}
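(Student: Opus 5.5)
We will prove Wolpert's lemma: a $K$-quasiconformal map $f\colon X\to X'$ is $K$--tight. The plan is to reduce to annular covers and compare conformal moduli. By symmetry it suffices to prove the upper bound $\ell_{X'}(f(\gamma))\le K\,\ell_X(\gamma)$, because $f^{-1}$ is also $K$-quasiconformal. Applying the upper bound to $f^{-1}$ and the geodesic representative of $f(\gamma)$ then gives the lower bound.

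Fix a closed geodesic $\gamma$ on $X$ of length $\ell=\ell_X(\gamma)$, and let $A_\gamma=\hyp/\langle g\rangle$ be the annular cover of $X$ associated to $\gamma$, where $g\in\Gamma$ is the hyperbolic element representing $\gamma$.
\begin{enumerate}
\item Normalize $g(z)=e^{\ell}z$. The map $z\mapsto \log z$ followed by $w\mapsto 2\pi i w/\ell$ identifies $A_\gamma$ conformally with the round annulus $\{1<|\zeta|<e^{2\pi^2/\ell}\}$. Hence $\operatorname{mod}(A_\gamma)=\pi/\ell$, up to the fixed normalizing constant in the definition of modulus; only the inverse proportionality to $\ell$ matters.
\item Lift $f$ to $\widetilde f\colon\hyp\to\hyp$. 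It conjugates $g$ to $g'=\widetilde f g\widetilde f^{-1}\in\Gamma'$, the element representing $f(\gamma)$. Hence it descends to a $K$-quasiconformal homeomorphism $A_\gamma\to A_{f(\gamma)}$ between the annular covers.
\item Use the quasi-invariance of modulus for doubly connected domains (equivalently, of extremal length of the family of curves separating the boundary components) under $K$-quasiconformal maps:
\[\tfrac1K\operatorname{mod}(A_\gamma)\le \operatorname{mod}(A_{f(\gamma)})\le K\operatorname{mod}(A_\gamma).\]
Since $\operatorname{mod}=\pi/\ell$ in both covers, this yields $\ell_X(\gamma)/K\le \ell_{X'}(f(\gamma))\le K\,\ell_X(\gamma)$, which is exactly $K$--tightness.
\end{enumerate}

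The main obstacle is the annular covers themselves. Step 3 needs the conjugated element $g'$ to be hyperbolic, so that $A_{f(\gamma)}$ is a genuine round annulus of finite modulus. This holds because quasiconformal conjugacy preserves the type of an element, and a parabolic cover (a punctured disc) has infinite modulus, so it cannot be $K$-qc equivalent to an annulus of finite modulus. It is also worth noting that no compactness is used: only the modulus of a single annulus enters, so the argument is insensitive to $X$ being of infinite type.

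Should the modulus approach run into trouble, the alternative I would try first is Wolpert's original route. That route uses the collar/extremal length characterization $\operatorname{Ext}_X(\gamma)\asymp \ell_X(\gamma)$ together with the $K$-quasi-invariance of extremal length (Kerckhoff style). However, it yields constants $K^{\pm 1}$ only through the annular-cover computation above, so that computation is the heart of the argument.
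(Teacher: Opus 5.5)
Your proof is correct and is essentially Wolpert's own argument, which the paper cites rather than reproves: pass to the annular cover $\hyp/\langle g\rangle$, whose modulus is $\pi/\ell$, and use the $K$-quasi-invariance of modulus under the descended lift. Two small remarks: in step 1 you must still apply $\exp$ after $w\mapsto 2\pi i w/\ell$ to reach the round annulus, and your closing paragraph misattributes things, since this annular-cover comparison \emph{is} Wolpert's original route.
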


\begin{remark}
We note that while the property that $f$ is $K$-quasiconformal is a property of the map $f$, being $L$-tight is a property of the isotopy class of $f$. Thus, given an isotopy class of maps $[f \colon X \to X']$, we may refer to this isotopy class as being $L$--tight, and a representative homeomorphism $f \colon X \to X'$ being $K$--quasiconformal.
\end{remark}

\subsection{Modular groups}\label{ssec:modgrps}

Given a hyperbolic Riemann surface $X$, recall that $\Mod(X)$ is the Teichm\"uller modular group of $\T(X)$, consisting of mapping classes that are represented by quasiconformal homeomorphisms with respect to the structure $X$. 
A result of Markovic \cite{Markovic2003} implies that $\Mod(X)$ is exactly the biholomorphic automorphism group of $\T(X)$, except in some finite type cases that will not be relevant for the current paper.  Viewed as mapping classes of quasi-conformal homeomorphisms, $\Mod(X)$ acts on $\T(X)$ by
\[ g \cdot [f \colon X \to X'] = [f \circ g^{-1} \colon X \to X'], \]
which is well defined since $f \circ g^{-1}$ is quasi-conformal.

\subsection{Asymptotically conformal Teichm\"uller space}\label{ssec:acteich}

In \cite{EGL2000,EGL2004}, Earle-Gardiner-Lakic  defined a natural subspace of $\T(X)$ consisting of asymptotically conformal deformations of $X$ (see also \cite[Chapter 14]{GardLak}). 

\begin{definition}
    A quasiconformal map $f: X \rightarrow X'$ is \textbf{asymptotically conformal} if for every $\epsilon >0$, there exists a compact set $C\subset X$ such that $f\vert_{X \setminus C}$ is $(1+\epsilon)$-quasiconformal.
\end{definition}

The \emph{asymptotically conformal Teichm\"{u}ller space} (or \emph{asymptotically conformal deformation space}) of $X$, denoted $\T_{0}(X)$, is the closed subspace of $\T(X)$ consisting of classes $[f,X']$ that contain an asymptotically conformal representative. The fact that this is a \emph{closed} subspace is due to Gardiner-Sullivan \cite[Proposition 3.3]{GS1992}. In fact, $\T_{0}(X)$ is a closed complex submanifold of $\T(X)$ \cite{EGL2000} and is connected and complete with respect to the Teichm\"{u}ller metric \cite{EGL2004}. Earle-Gardiner-Lakic also prove similar results about the \emph{asymptotic Teichm\"{u}ller space} of $X$, which is a quotient of $\T(X)$ for which the fiber over the identity map is $\T_{0}(X)$.  We will not consider this latter space here.

Note that a map $f:X \rightarrow X'$ is asymptotically conformal if and only if its Beltrami coefficient, $\mu_{f}$, vanishes at infinity. That is, for every $\epsilon>0$, there exists a compact set $C \subset X$ such that $|\mu_{f}(z)| < \epsilon$ for all $z \in X \setminus C$. We will write $\Bel_{0}(X)$ for the subset consisting of Beltrami coefficients on $X$ that vanish at infinity. Thus a map $f:X \rightarrow X'$ is asymptotically conformal if and only if $\mu_{f} \in \Bel_{0}(X)$.

The composition of a $K$--quasiconformal with a $K'$--quasi-conformal map is $KK'$--quasiconformal.  The following is a straightforward consequence which we record for future use. 

\begin{lemma} \label{L:ACgroup}
\begin{enumerate}
    \item If $f: X \to X'$ is asymptotically conformal, then so is its inverse $f^{-1}: X'\to X$. 
    \item If $f_1:X_1 \to X_2$ and $f_2:X_2\to X_3$ are asymptotically conformal, then so is the composition $f_2\circ f_1: X_1 \to X_3$. \qed
\end{enumerate}
\end{lemma}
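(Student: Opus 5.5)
The plan is to argue directly from the definition of asymptotic conformality, using two standard facts about quasiconformal maps. First, a map is $K$--quasiconformal on an open set precisely when its dilatation satisfies $K_f(z)=\frac{1+|\mu_f(z)|}{1-|\mu_f(z)|}\le K$ almost everywhere there. Second, the inverse of a $K$--quasiconformal map is $K$--quasiconformal, and a composition of a $K$-- and a $K'$--quasiconformal map is $KK'$--quasiconformal; both hold locally on open sets. The only real care needed is in tracking the compact sets, since $f$ is a homeomorphism and compact sets on $X$ correspond to compact sets on $X'$.

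For (1), I fix $\epsilon>0$. Asymptotic conformality of $f$ gives a compact $C\subset X$ with $f|_{X\setminus C}$ being $(1+\epsilon)$--quasiconformal. The image $C'=f(C)$ is compact in $X'$, and since $f$ is a homeomorphism, $f^{-1}$ maps $X'\setminus C'$ onto $X\setminus C$. The restriction $f^{-1}|_{X'\setminus C'}$ is the inverse of the $(1+\epsilon)$--quasiconformal map $f|_{X\setminus C}$, so it is itself $(1+\epsilon)$--quasiconformal. Because $\epsilon$ was arbitrary, $f^{-1}$ is asymptotically conformal; it is globally quasiconformal since $f$ is.

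For (2), I again fix $\epsilon>0$ and choose $\delta>0$ with $(1+\delta)^2\le 1+\epsilon$. I take compact sets $C_1\subset X_1$ and $C_2\subset X_2$ such that $f_1$ is $(1+\delta)$--quasiconformal off $C_1$ and $f_2$ is $(1+\delta)$--quasiconformal off $C_2$. Then I set $C=C_1\cup f_1^{-1}(C_2)$, which is compact in $X_1$. On the open set $X_1\setminus C$, the map $f_1$ is $(1+\delta)$--quasiconformal and takes values in $X_2\setminus C_2$, where $f_2$ is $(1+\delta)$--quasiconformal. Hence $f_2\circ f_1$ is $(1+\delta)^2\le(1+\epsilon)$--quasiconformal on $X_1\setminus C$, which is what asymptotic conformality of the composition requires.

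There is no substantial obstacle here. The one point to handle carefully is that the compact set chosen for the composition must absorb the preimage of the bad set of the second map, which is exactly why $f_1^{-1}(C_2)$ is included in $C$.
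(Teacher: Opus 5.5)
Your proposal is correct and takes the same approach as the paper. The paper records this lemma as a straightforward consequence of the fact that a composition of a $K$-- and a $K'$--quasiconformal map is $KK'$--quasiconformal, with invariance of the dilatation under inversion used implicitly; you have simply supplied the compact-set bookkeeping ($f(C)$ for the inverse, $C_1\cup f_1^{-1}(C_2)$ for the composition) that the paper leaves unstated.
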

Asymptotically conformal homeomorphisms can be adjusted to better describe the behavior of hyperbolic metrics by the following theorem of Earle--Markovic--Saric \cite{EMS2002}.  We say that a biLipschitz homeomorphism $f \colon X \to X'$ between hyperbolic Riemann surfaces is an {\em asymptotic isometry} if for every $K > 1$ there exists a compact subset $C \subset X$ so that $f|_{X \setminus C}$ is $K$--biLipschitz.  An asymptotically isometric map is in particular asymptotically quasiconformal.

\begin{theorem}\cite[Theorem 3]{EMS2002}\label{thm:EMS}
   Let $X$ be a hyperbolic Riemann surface. Every point of $\T_0(X)$ can be represented by an asymptotic isometry. 
\end{theorem}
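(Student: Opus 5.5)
The plan is to take the Douady--Earle (barycentric) extension of the boundary values of an asymptotically conformal representative, and to show by a compactness argument that its derivative is uniformly close to an isometry far out in $X$.

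First, fix a point of $\T_0(X)$ and choose an asymptotically conformal representative $f\colon X\to X'$, which exists by definition. Lift $f$ to a $K$--quasiconformal map $\widetilde f\colon\hyp\to\hyp$ that conjugates $\Gamma$ to $\Gamma'$, and let $h=\widetilde f|_{\R}$ be its quasisymmetric boundary map. Replace $\widetilde f$ by the Douady--Earle extension $E(h)$. Conformal naturality, $E(A\circ h\circ B)=A\circ E(h)\circ B$ for M\"obius $A,B$, gives two things. The map $E(h)$ is $(\Gamma,\Gamma')$--equivariant, so it descends to a homeomorphism $F\colon X\to X'$. It also has the same boundary values as $\widetilde f$, so $F$ represents the same point of $\T(X)$. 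Douady--Earle further give that $E(h)$ is real-analytic and quasiconformal, with hyperbolic derivative bounded in terms of $K$ alone. Hence $F$ is biLipschitz, and it remains to prove the asymptotic statement.

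For the local estimate, take $z\in\hyp$, and choose M\"obius maps $B_z$ with $B_z(0)=z$ and $A_z$ with $A_z(E(h)(z))=0$; work in the disk model for this step. Normalize $h_z=A_z\circ h\circ B_z$ and $\widetilde f_z=A_z\circ\widetilde f\circ B_z$. By naturality, the hyperbolic derivative of $E(h)$ at $z$ equals that of $E(h_z)$ at $0$. The key claim is the following.
\begin{quote}
Given $\epsilon>0$, there is $\delta>0$ such that if $\widetilde f_z$ is globally $K$--quasiconformal and $(1+\delta)$--quasiconformal on the hyperbolic ball of radius $1/\delta$ about $0$, then the derivative of $E(h_z)$ at $0$ is $\epsilon$--close to that of a rotation.
\end{quote}
Argue by contradiction. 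A failing sequence of normalized $K$--quasiconformal maps has a subsequence converging uniformly on the closed disk, by compactness of normalized $K$--quasiconformal maps. The Beltrami coefficients tend to $0$ on exhausting balls, so the limit is conformal, i.e.\ a rotation. Then use that the Douady--Earle extension is continuous in the $C^1$ topology on compact sets with respect to uniform convergence of boundary maps; this is part of Douady--Earle's construction via the implicit function theorem. The extension of a rotation is that rotation, an isometry, which contradicts the assumed failure.

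To finish, let $\epsilon>0$ and take the $\delta$ from the claim. Since $\mu_f$ vanishes at infinity, there is a compact $C_1\subset X$ with $|\mu_f|$ small off $C_1$. Let $C$ be the closed $1/\delta$--neighborhood of $C_1$ in $X$, which is compact because the hyperbolic metric on $X$ is complete. For $z$ projecting outside $C$, the ball of radius $1/\delta$ about $z$ projects into $X\setminus C_1$, so the claim applies. Hence $dF$ is $(1+\epsilon)$--close to an isometry at every point of $X\setminus C$.

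The remaining issues are twofold. One is converting this infinitesimal bound into a $(1+\epsilon)$--biLipschitz bound for the path metric on $X\setminus C$. I would read the asymptotic isometry condition infinitesimally, or else enlarge $C$ so that near-geodesics between points far from $C$ avoid it. The other is checking that the compactness argument is uniform in $z$, which holds because $K$ is fixed. I expect the main obstacle to be the claim in the second paragraph, specifically the $C^1$ continuity of the barycentric extension under uniform convergence of boundary maps. I would verify this directly from the implicit-function-theorem definition of the barycenter.
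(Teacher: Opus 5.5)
Your proposal is correct and follows the route the paper indicates. The paper does not reprove this result; it cites Earle--Markovic--\v{S}ari\'c and notes that the representative is the descent of the Douady--Earle extension of the boundary values, which is exactly your $F$, and your argument (conformal naturality, compactness of barycenter-normalized $K$--quasisymmetric maps, and $C^1$ continuity of the barycentric extension on compacta) is the standard way to establish it. One remark needs correcting: a minimizing geodesic between two points far from $C$ cannot in general be made to avoid $C$ (take points in different ends), but enlarging $C$ to $N_{LR}(C)$ still gives the metric statement, because such a geodesic that meets $C$ has length at least $2R$ and travels at most $\operatorname{diam}(C)$ between its first and last visits to $C$, so the global $L$--biLipschitz bound there adds only an error that is small relative to the distance (and the same argument applied to $F^{-1}$ gives the lower bound).
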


In fact, the representative in the theorem is (the descent of) the Douady-Earle extension of the boundary values for an asymptotically conformal homeomorphism (\cite{DouadyEarle}). We will also use the following, which can be viewed as a strengthening of Wolpert's lemma for asymptotically conformal deformations.  This is an immediate consequence of Theorem~\ref{thm:EMS}, though it can also be proved by a similar argument to Wolpert's original one from \cite{Wolpert1979} together with the monotonicity of modulus for annuli.

\begin{corollary} \label{cor:nearly isometric}
    Let $X$ be a hyperbolic Riemann surface.  For any asymptotically conformal map $f:X \rightarrow X'$ and $L > 1$, there exists a compact $C \subset X$ such that, for all closed geodesics $\gamma \subset X \setminus C$,
    \[ \ell_{X'}(f(\gamma)) \stackrel{L}{\asymp} \ell_X(\gamma).\]
\end{corollary}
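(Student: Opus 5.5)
The plan is to deduce the corollary from \Cref{thm:EMS}. Given $L>1$, replace $f$ by the asymptotic isometry in its class, and then bound lengths of geodesics far from a compact set by integrating along curves in the region where this map is nearly isometric.

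First step: apply \Cref{thm:EMS} to the point $[f\colon X\to X']\in\T_0(X)$. This gives a biLipschitz asymptotic isometry $F\colon X\to X'$ that is homotopic to $f$ up to post-composition with a conformal map. Conformal maps between hyperbolic surfaces are isometries, so we may take $F$ homotopic to $f$ with target $X'$. Since $\ell_{X'}(f(\gamma))$ depends only on the homotopy class, it suffices to prove the estimate for $F$.

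Second step, the upper bound. Choose a compact $C_0\subset X$ such that $F|_{X\setminus C_0}$ is $L$--biLipschitz. Take $C=C_0$ (or a small compact neighborhood of it). If a closed geodesic $\gamma$ lies in $X\setminus C$, then $F(\gamma)$ is a closed curve of length at most $L\,\ell_X(\gamma)$. Its geodesic representative is no longer, so $\ell_{X'}(F(\gamma))\le L\,\ell_X(\gamma)$.

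Third step, the reverse inequality. This is the main obstacle, because the geodesic representative $\gamma'$ of $F(\gamma)$ in $X'$ need not stay inside $F(X\setminus C_0)$. So we cannot simply pull it back through the $L$--biLipschitz region. The plan is to use the symmetric statement for $F^{-1}$, which is also an asymptotic isometry, together with a thickening argument.

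Fix a compact $C_1'\subset X'$ such that $F^{-1}$ is $L$--biLipschitz off $C_1'$. Enlarge it to a compact $C'$ such that any closed geodesic in $X'$ meeting $C_1'$ and homotopic into $X'\setminus C'$ is impossible. For this we use that a closed geodesic entering $C_1'$ while being homotopic to a curve far away forces some thin collar or a long excursion. More precisely, we use convexity. Take $C'$ to be a compact essential subsurface with geodesic boundary containing $C_1'$, together with a collar neighbourhood. A closed geodesic homotopic into the complement of this subsurface cannot cross its geodesic boundary, by minimal position of geodesics, and hence stays outside it.

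Then set $C=C_0\cup F^{-1}(C')$, enlarged to a compact subsurface with geodesic boundary. Apply the same convexity reasoning in $X$ to geodesics $\gamma$ that lie outside $C$. For such $\gamma$, the geodesic $\gamma'$ is disjoint from $C_1'$. Hence $F^{-1}(\gamma')$ is a curve homotopic to $\gamma$ of length at most $L\,\ell_{X'}(\gamma')$, which gives $\ell_X(\gamma)\le L\,\ell_{X'}(F(\gamma))$.

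I expect the delicate point to be ensuring that exhausting compact subsurfaces with geodesic boundary exist around $C_1'$ and behave well under $F$. This holds for surfaces of the first kind with finitely many ends accumulated by genus. Alternatively, one could give the annulus-modulus argument hinted at in the text, in the spirit of \Cref{lem:wolpert}: compare the moduli of the covering annuli associated with $\gamma$, restricted to regions away from $C$, where the dilatation is at most $1+\epsilon$.
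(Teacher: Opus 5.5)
Your argument takes the paper's route. The paper gives no proof beyond calling the corollary an immediate consequence of \Cref{thm:EMS} (or of a Wolpert-type modulus argument), and your first two steps are what that sentence means. Where you go further is the reverse inequality. You are right that this is the one non-immediate point: the geodesic representative $\gamma'$ of $F(\gamma)$ need not stay where $F^{-1}$ is nearly isometric.

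Your convexity fix works, but it relies on a fact you assert rather than prove: that $C_1'$ lies in the interior of a compact subsurface of $X'$ with geodesic boundary. This holds in the paper's standing setting (first kind, all ends accumulated by genus, so no cusps). One argument: closed geodesics are dense for a Fuchsian group of the first kind, so finitely many of them cut small geodesic polygons around the points of $C_1'$ and fill a compact subsurface containing it. However, the fact fails once $X'$ has a cusp whose standard neighborhood $C_1'$ meets. So as written your proof does not cover the corollary in the generality stated. Enlarging $C$ in $X$ to a geodesic-boundary subsurface is also unnecessary.

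A cheaper completion uses the global biLipschitz constant $K_0$ that is built into the paper's definition of an asymptotic isometry. A lift of $F^{-1}(\gamma')$ to $\hyp$ is a $K_0$--quasi-geodesic invariant under the deck transformation of $\gamma$. By stability of quasi-geodesics, $F^{-1}(\gamma')$ therefore lies within some $M=M(K_0)$ of $\gamma$. Taking $C=\{x : d(x,C_0)\le M\}$ (compact by completeness) forces $\gamma'\subset F(X\setminus C_0)$, where $F^{-1}$ is $L$--Lipschitz. This gives $\ell_X(\gamma)\le L\,\ell_{X'}(\gamma')$ with no topological hypothesis.

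The modulus route you mention at the end has the same localization problem if you only use monotonicity and apply it to $f^{-1}$. The problem disappears if you instead use the weighted estimate $M(f\Gamma)\le\int K_f\rho^2$ on the annular cover of $\gamma$, for both the core-curve family and the radial family. Then only a wide collar of $\gamma$ itself has to avoid the region where $|\mu_f|$ is not small.
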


\begin{definition}
    For any essential subsurface $\Sigma \subset X$ with geodesic boundary and $L>1$, we say that $f:X \rightarrow X'$ is \emph{$L$-tight on $\Sigma$} if 
    \[ \ell_{X'}(f(\gamma)) \stackrel{L}{\asymp} \ell_X(\gamma)\]
    for all $\gamma$ closed geodesics contained in $\Sigma$. 
\end{definition}
Thus, \Cref{cor:nearly isometric} says that given an asymptotically conformal map, $f$, and any $L>1$, there exists some compact $C \subset X$, so that, $f$ is $L$-tight on $X \setminus C$. 

\subsection{Asymptotically conformal modular groups}\label{ssec:acmodgrps}

We write
\[ \Mod_{0}(X) < \Mod(X)\]
to denote the subgroup consisting of mapping classes represented by asymptotically conformal homeomorphisms with respect to the structure $X$. We will refer to this group as the \emph{asymptotically conformal modular group}. We use $\PMod_{0}(X)$ to refer to the subgroup of $\Mod_{0}(X)$ consisting of mapping classes that fix the ends of $S$ pointwise. Note that if $S$ has finitely many ends, then $\PMod_{0}(X)$ is a finite index subgroup of $\Mod_{0}(X)$.

\subsection{Surface Houghton Groups}\label{ssec:shogrps}
Here we will define rigid structures on surfaces with finitely many ends, all of them non-planar. In this specificity, these rigid structures give rise to \emph{surface Houghton groups} as defined in \cite{AraBuxKimLei,AraDomLei}. We remark that these definitions appear in various forms in \cite{ABFPW2024,AraBuxKimLei,AraDomLei}, which are in turn inspired by the homonymous notion due to Funar--Kapoudjian \cite{Funar-Kapoudjian, FunKap}. 

Fix a compact, connected, orientable surface $O$, which we call the  {\em core} surface, with at least one boundary component. Let $P$, referred to as the {\em model piece}, be a compact, connected surface with two boundary components $\partial_{+}P$ and $\partial_-P$, and referred to as the \emph{top} and \emph{bottom} boundaries, respectively. We fix a homeomorphism $\lambda:\partial_{-} P \rightarrow \partial_{+} P$. 

Let $S$ be a connected orientable surface with finitely many ends, each of which is non-planar. A \emph{rigid structure}, denoted $\cR$, on $S$ is a decomposition
\begin{align*}
    S = O \cup \bigcup_{j\in J} Y_{j}
\end{align*}
 of $S$ into compact subsurfaces, where $J$ is a countable index set, together with a set of fixed \emph{marking homeomorphisms} $\phi_{j}:Y_{j} \rightarrow P$, for each $j\in J$. We use these homeomorphisms to label the boundary components of each $Y_{j}$ as $\partial_{\pm}Y_{j} = \phi_{j}^{-1}(\partial_\pm P)$. We additionally require this decomposition to satisfy the following:
\begin{itemize}
    \item $\inte(Y_i) \cap \inte(Y_j)= \emptyset$ for $i\ne j$.
    \item For all $j\in J$, either $Y_{j} \cap O = \emptyset$ or $Y_{j} \cap O = \partial_{+}Y_{j}$ is a component of $\partial O$. 
    \item For all $i \neq j \in J$, either $Y_{i} \cap Y_{j} = \emptyset$ or $Y_{i} \cap Y_{j} = \partial_{+}Y_{j} = \partial_{-}Y_i$, and
    \begin{align*}
        \phi_{j}^{-1}\vert_{\partial_{+}P} \circ \lambda \circ \phi_{i}\vert_{\partial_{-}Y_{i}} = \id\vert_{\partial_{-}Y_{i}},
    \end{align*}
    after possibly swapping the roles of $i$ and $j$.
\end{itemize}

We refer to each $Y_j$ as a \emph{piece}. A \emph{suited subsurface} of $S$ is a connected union of $O$ and finitely many pieces. 

We say that a mapping class $h\in \Map(S)$ is \emph{asymptotically rigid} if it has a representative (abusing notation) $h: S \to S$ for which there there exists a suited surface $Z$, called a \emph{defining surface}, satisfying: 
\begin{itemize}
    \item $h(Z)$ is a suited subsurface of $S$; 
    \item $f$ is {\em rigid away from} $Z$, i.e. for every piece $Y_j$ contained in $S \setminus Z$, we have that $f(Y_i)$ is a piece $Y_j$, and $h_{|Y_i}\equiv \phi_j^{-1} \circ \phi_i$. 
\end{itemize}

The \emph{surface Houghton group} of $S$ with respect to the rigid structure $\cR$, denoted $\SHo(S,\cR)$, is the subgroup of $\Map(S)$ consisting of asymptotically rigid mapping classes. We only ever consider a single fixed rigid structure and surface Houghton group at a time and so we often omit $\cR$ and simply write $\SHo(S)$. We note that these groups are examples of a more general class of \emph{asymptotically rigid mapping class groups}, e.g. see \cite{FunKapSer2012,GenLonUre2022,GenLonUre2025,ABFPW2024}.

Surface Houghton groups were studied in \cite{AraBuxKimLei,AraDomLei}, mainly from the point of view of their finiteness properties. In particular, if $S$ is a surface with exactly $n$ ends, all accumulated by genus, then $\SHo(S)$ (independent of the rigid structure) is of type $F_{n-1}$ but not of type $FP_n$. The actual computation of these finiteness properties is from \cite[Theorem 1.1]{AraBuxKimLei} and the independence of rigid structure is \cite[Theorem 1.1]{AraDomLei}.

Next we introduce some useful terminology and constructions related to rigid structures and these groups. Let $S$ be the surface with exactly $n\ge 1$ ends, all accummulated by genus. A (closed) neighborhood of an end $\E$ that is the connected union of pieces will be called a {\em combinatorial neighborhood} of the end.  We will denote a combinatorial neighborhood by
\[ U = Y_1 \cup Y_2 \cup \ldots, \]
where $Y_i,Y_{i+1}$ are {\em adjacent} (i.e.~intersecting) pieces in the union.  In particular, $Y_1$ is not adjacent to any other piece and we call this the {\em boundary piece} of $U$, while every other piece is in the interior of $U$.
We sometimes say that the pieces $Y_1,Y_2,\ldots$ are {\em enumerated linearly} in $U$.

In \cite{aramayona2020first}, Aramayona-Patel-Vlamis construct a {\em flux homomorphism}
\[ \Phi \colon \PMap(S) \to {\mathbb Z}^n,\]
whose $k^{th}$ coordinate measures the {\em genus shift} out the $k^{th}$ end, $\E_k$. Here $\PMap(S)$ is the kernel of the action of $\Map(S)$ on the space of ends, i.e. the \emph{pure mapping class group}; observe that $\PMap(S)$ has finite index if $S$ has finitely many ends. We also let $\PSHo(S) = \SHo(S) \cap \PMap(S)$ be the \emph{pure surface Houghton group}. 
For any element $f \in \PSHo(S)$, every end has a combinatorial neighborhood so that $f$ shifts every piece away or towards the core by some integer $j_k(f)$ amount, depending only on the end $\E_k$ and the mapping class $f$ (with positive integers shifting away from the core, and negative integers shifting towards it).  If a piece defining $S$ has genus $g$, then 
\[ \Phi(f) = (gj_1(f),\ldots,gj_n(f)). \]

From the discussion in \cite[Section 6]{AraBuxKimLei}, using results in \cite{aramayona2020first}, one can obtain a fairly simple description of a generating set for $\SHo(S)$. First one takes a finite collection, $\Omega$, of elements in $\SHo(S)$ whose action on the end space of $S$ is transitive. Next, given any end of $S$, pick a combinatorial neighborhood $U=Y_1 \cup Y_2 \cup \cdots$ and take an element $F$ that acts on $U$ by a rigid map sending $Y_k$ to $Y_{k+1}$ for all $k \geq 1$. The group $\SHo(S)$ is generated by $\Omega$, $F$, and the set of compactly supported mapping classes. Note that this is an \emph{infinite} generating set, but this will be sufficient for our needs. If one wants to write down a finite generating set, whose existence is guaranteed by \cite{AraBuxKimLei}, one needs to use the fact that Dehn twists generate the compactly supported mapping classes.

\section{Compatible hyperbolic structures}\label{sec:compatiblestructures}

Let $\cR$ be a rigid structure on a surface $S$ and $X$ a hyperbolic Riemann surface structure on $S$ (recall that this means there is, in particular, an implicit homeomorphism from $S$ to $X$ we use to identify the two surfaces). 

We say that a combinatorial neighborhood $U = Y_1 \cup Y_2 \cup \cdots \subset X$ of an end $\E$ is \emph{$\cR$-compatible} if there is a fixed hyperbolic structure on the model piece $P$ such that $\lambda:\partial_- P \to \partial_+P$ is an isometry, and so that each marking homeomorphism $\phi_k \colon Y_k \to P$ is an isometry (where $Y_k$ is given the restriction of its hyperbolic metric from $X$).
Note that the intersection of any finite set of $\cR$-compatible neighborhoods (of the same end) is itself a $\cR$-compatible neighborhood.

We say that $X$ is \emph{$\cR$-compatible} if each end of $X$ has an $\cR$-compatible neighborhood with respect to the same fixed hyperbolic structure on $P$. When $\cR$ is understood we omit it and will refer to $X$ simply as a \emph{compatible hyperbolic structure} and such a $U$ as a \emph{compatible neighborhood} of the end $\E$. Note that $X$ being compatible is equivalent to saying that $\phi_j \colon Y_j \to P$ is an isometry for all but finitely many $j \in J$ (where the hyperbolic metric on $P$ and $\lambda$ have the properties above).

A homeomorphism $f: X \to X$ will be called {\em eventually isometric for $X$} (or just {\em eventually isometric}, if $X$ is understood), if it is quasi-conformal on $X$, and is an {\em isometry} outside some compact subset.  We also call an element of $\Mod(X)$ eventually isometric if it has an eventually isometric representative.

\begin{remark}
    We note that if $f \colon X \to X$ is an eventual isometry, then it is an asymptotic isometry, but not vice-versa.
\end{remark}

For the remainder of this section, we fix a rigid structure $\cR$ on a surface $S$ with finitely many ends, each of which is accumulated by genus, and omit any reference to  $\cR$ when understood. 

\begin{lemma} \label{lem:tight basics}
Suppose $X$ is a compatible hyperbolic structure.  Then $\SHo(S)<\Mod(X)$ and every element is eventually isometric. In particular, $\SHo(S)< \Mod_{0}(X)$.
\end{lemma}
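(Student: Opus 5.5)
Fix $h \in \SHo(S)$ and a defining suited surface $Z$ for a representative $h\colon S \to S$. The plan is to modify $h$ by an isotopy supported near the compact part so that the new representative is quasiconformal everywhere and an isometry off a compact set. Then $h$ lies in $\Mod(X)$, and since an isometry is conformal, the Beltrami coefficient vanishes off a compact set. This gives membership in $\Mod_0(X)$.

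First, I enlarge $Z$ to a suited subsurface $Z'$ by adding finitely many pieces, so that every piece not in $Z'$ lies in a compatible neighborhood, and likewise every piece not in $h(Z')$. This uses that only finitely many $\phi_j$ fail to be isometries. Rigidity away from $Z$ persists for $Z'$. For each piece $Y_i \subset S\setminus Z'$, $h(Y_i)=Y_j$ with $h|_{Y_i}=\phi_j^{-1}\circ\phi_i$, and both $\phi_i,\phi_j$ are isometries onto the fixed hyperbolic $P$. Hence $h|_{Y_i}$ is an isometry. On overlaps $\partial_-Y_i=\partial_+Y_{i'}$ the definitions agree, by the compatibility relation with $\lambda$, which is itself an isometry. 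So $h$ restricted to $S\setminus \inte(Z')$ is a piecewise isometry that matches along shared geodesic boundary curves, and is therefore a genuine isometry onto $S \setminus \inte(h(Z'))$.

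Second, I handle the compact part. The boundary curves of $Z'$ and $h(Z')$ are boundaries of compatible pieces. The restriction $h|_{\partial Z'}$ is an isometry between these boundary curves, and $h$ maps $Z'$ homeomorphically onto $h(Z')$. I would isotope $h|_{Z'}$, relative to $\partial Z'$, to a quasiconformal (indeed smooth, or piecewise-linear biLipschitz) homeomorphism $Z' \to h(Z')$ with the same boundary values. Between compact surfaces with boundary, any homeomorphism is isotopic rel boundary to a diffeomorphism once the boundary map is smooth, and a diffeomorphism of compact surfaces is biLipschitz and hence quasiconformal. The isometric boundary map on the geodesic boundaries is smooth, so this applies. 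Gluing gives a homeomorphism of $X$ isotopic to $h$ that is quasiconformal on the compact $Z'$ and isometric on the complement, so it is quasiconformal with dilatation bounded by that on $Z'$.

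The main obstacle is the gluing regularity. I would need to check that a map which is biLipschitz on $Z'$ and isometric outside, agreeing continuously along finitely many geodesic curves, is globally quasiconformal. I would handle this by observing that the glued map is globally biLipschitz, since the pieces are glued along curves and distances can be split along paths. Alternatively, quasiconformality is local and removable across analytic arcs. Either route gives a quasiconformal representative, and the eventual isometry statement follows.
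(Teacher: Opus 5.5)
Your proposal is correct and follows essentially the same route as the paper. You enlarge the defining surface so that every piece outside both $Z'$ and $h(Z')$ has an isometric marking, conclude that $h|_{Y_i}=\phi_j^{-1}\circ\phi_i$ is an isometry there, and isotope rel $\partial Z'$ to a diffeomorphism on the compact part; your biLipschitz/removability argument for gluing across $\partial Z'$ fills in a detail the paper leaves implicit.
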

\begin{proof}
Fix $f \in \SHo(S)$, there exists a (compact) suited subsurface $Z \subset X$ so that $f$ is rigid outside $Z$.  By enlarging $Z$ if necessary, we can further assume that the closure of the complement of $Z$ is a union of compatible neighborhoods of the ends.  Let $Z' \subset X$ be an even larger suited subsurface with $Z \subset Z' \cap f(Z')$.  Then for any piece $Y_i \subset \overline{X \setminus Z'} \subset \overline{X \setminus Z}$, $f(Y_i) = Y_j$ is a piece and $Y_j \subset \overline{X \setminus f(Z')} \subset \overline{X \setminus Z}$.  In particular, because $\overline{X \setminus Z}$ is a union of compatible neighborhoods and $f$ is rigid outside $Z$, we see that
\[ f|_{Y_i} = \phi_j^{-1} \circ \phi_i \colon Y_i \to Y_j.\]
Both $\phi_i$ and $\phi_j$ are isometries, and hence so is $f|_{Y_i}$.  Since $Y_i$ was an arbitrary piece outside $Z'$, $f$ is an isometry on $\overline{X \setminus Z'}$, as required.

For the second claim, we simply note that we may apply an isotopy to $f$ rel $\partial Z'$, supported on $Z'$ so that it is a diffeomorphism, hence quasiconformal on $Z'$.  Since $f$ is conformal outside $Z'$, it is asymptotically quasiconformal.  Thus $\SHo(S) < \Mod_0(X)$.  

\end{proof}

\begin{lemma} \label{lem:rigid canonical}
If $X$ and $X'$ are any two compatible hyperbolic Riemann surface structures on $S$, then $\T(X) = \T(X')$; More precisely, composing the implicit homeomorphisms $S \cong X$ and $S \cong X'$ determines a homeomorphism $X \cong X'$ that is quasiconformal.
\end{lemma}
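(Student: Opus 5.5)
The plan is to show that the identity map of $S$, viewed as a map $X \to X'$ via the implicit homeomorphisms, is isotopic to a quasiconformal homeomorphism $h \colon X \to X'$; since $\T(X)$ is defined via quasiconformal deformations, this identifies $\T(X)$ with $\T(X')$ by precomposition with $h$. The key idea is to use the rigid structure: outside a compact suited subsurface, both $X$ and $X'$ are built from isometric copies of hyperbolic structures on the model piece $P$, say $P_X$ and $P_{X'}$ (with $\lambda$ an isometry for each), glued according to the same combinatorics via the same markings $\phi_j$.

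First, choose a suited subsurface $Z \subset S$ large enough that the closure of $S \setminus Z$ is a union of combinatorial neighborhoods of the ends that are compatible for both $X$ and $X'$; this is possible since compatibility holds for all but finitely many pieces in each structure. Next, construct a diffeomorphism $\psi \colon P_X \to P_{X'}$ isotopic to the identity of $P$ such that $\psi$ commutes with $\lambda$ on the boundary, i.e.\ $\psi|_{\partial_+P} \circ \lambda = \lambda \circ \psi|_{\partial_-P}$. Concretely, first pick $\psi$ on $\partial_- P$ as a diffeomorphism between the two geodesic boundary circles (e.g.\ a constant-speed parametrization matching basepoints), define it on $\partial_+P$ by $\lambda \psi \lambda^{-1}$, and then extend over $P$ to a diffeomorphism isotopic to the identity; using collars one can also arrange that $\psi$ is a product map near the boundary, so that the glued map is smooth across boundary curves. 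Since $P$ is compact, $\psi$ is $K$-quasiconformal for some $K$.

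Then define $h$ on each piece $Y_j$ outside $Z$ by $h|_{Y_j} = \phi_j^{-1} \circ \psi \circ \phi_j$, where $\phi_j$ is regarded as an isometry from $Y_j \subset X$ to $P_X$ and from $Y_j \subset X'$ to $P_{X'}$. The gluing compatibility $\phi_j^{-1}\lambda\phi_i = \id$ on $\partial_- Y_i$ together with $\psi$ commuting with $\lambda$ ensures these definitions agree on shared boundary curves, so $h$ is a well-defined homeomorphism on $\overline{S \setminus Z}$ which is $K$-quasiconformal (quasiconformality is local and the boundary curves have measure zero). On the compact surface $Z$, extend $h$ to a diffeomorphism $Z_X \to Z_{X'}$ agreeing with the already defined map on $\partial Z$ and isotopic to the identity rel boundary; being a diffeomorphism of a compact surface it is quasiconformal. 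The resulting $h$ is globally quasiconformal with constant the maximum of the two, and it is isotopic to the identity of $S$ because each piecewise definition is isotopic to the identity rel the boundary data, so the isotopies glue.

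The main obstacle is the boundary-compatibility step: ensuring that the piecewise definitions glue to a single homeomorphism isotopic to the identity, which requires choosing $\psi$ equivariantly with respect to $\lambda$ and isotopic to the identity of $P$ through maps respecting that constraint. If direct extension is awkward, I would instead allow $\psi$ to twist and correct this by noting that $\psi$ on the boundary is determined up to isotopy of circles, and then use the freedom to modify $\psi$ near boundary collars by a rotation so that the resulting global map differs from the identity by no twisting on each piece.
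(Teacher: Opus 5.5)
Your proposal is correct and takes essentially the same approach as the paper. Both arguments adjust the identity of $P$ by an isotopy to a biLipschitz map between the two model structures, transplant it to all pieces outside a compact suited subsurface via the markings, and isotope the compact part to a diffeomorphism. You are more careful than the paper in requiring $\psi$ to commute with $\lambda$ and to be isotopic to the identity through $\lambda$-equivariant maps, which is exactly what makes the pieces glue and rules out a stray multitwist along the gluing curves. One minor correction: quasiconformality across the gluing curves is justified by your product-collar arrangement (or simply the uniform biLipschitz bounds), not by those curves having measure zero.
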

\begin{proof}
The compatibility of $X$ and $X'$ gives two hyperbolic structures on the model piece, $P$. After adjusting by an isotopy, we may assume that the identity on $P$ is $K$--biLipschitz with respect to these two structures, for some $K > 1$. Now by shrinking compatible neighborhoods of ends if necessary, we may assume that $X \cong X'$ sends compatible neighborhoods to compatible neighborhoods.  Moreover, this homeomorphism sends pieces to pieces by the identity (via the implicit homeomorphisms with $S$), and thus it follows that on the chosen compatible neighborhoods, the homeomorphism is $K$--biLipschitz.  Since the complement of these neighborhoods has compact closure, we can adjust it by an isotopy to be a diffeomorphism.  In particular, $X \cong X'$ is a biLipschitz homeomorphism, and in particular it is quasiconformal.

\end{proof}

Our goal is to show that if $X$ is a compatible hyperbolic structure, then $\SHo(S)$ is a finite-index subgroup of $\Mod_0 (X)$. In the presence of such a constrained hyperbolic structure, we actually show something stronger. 

\begin{theorem} \label{thm:rigid to tight}
If $X$ is a compatible hyperbolic structure and $f \in \Mod_{0}(X)$, then $f$ is eventually isometric for $X$.
\end{theorem}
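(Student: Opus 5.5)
We plan to prove \Cref{thm:rigid to tight} by a rigidity argument built on \Cref{cor:nearly isometric}, together with a compactness argument on the model piece $P$. First, pass to the finite-index subgroup $\PMod_0(X)$; it suffices to treat $f$ fixing each end, since the end permutation can be absorbed by composing with an element of $\SHo(S)$ (which is eventually isometric by \Cref{lem:tight basics}). Fix an end $\E$ with compatible neighborhood $U = Y_1\cup Y_2\cup\cdots$. Since $f$ is a homeomorphism preserving $\E$, for all large $k$ the curve $f(\partial_+ Y_k)$ lies deep in $U$. Choose a finite filling collection $\Gamma_P$ of closed geodesics on $P$ containing the boundary curves. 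Hyperbolic length functions of finitely many curves determine a point of Teichm\"uller space of $P$ only locally, so we will use a larger marking set $\Gamma_P$ whose lengths determine a marked structure on the union of two adjacent copies of $P$ rigidly, i.e.\ the length map is injective on the relevant Teichm\"uller space of $P\cup_\lambda P$.

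The key steps are as follows. \emph{Step 1.} By \Cref{cor:nearly isometric}, for any $L>1$ there is a compact $C$ with $\ell_X(f(\gamma))\stackrel{L}{\asymp}\ell_X(\gamma)$ for all closed geodesics $\gamma$ outside $C$. \emph{Step 2 (coarse shift).} Show that there is an integer $j$ so that for all large $k$, the geodesic $f(\partial_+Y_k)$ is homotopic to $\partial_+Y_{k+j}$. For this we would use that $f$ restricted to a neighborhood of $\E$ has a genus shift (flux) and that the boundary curves $\partial_\pm Y_k$ are, up to bounded length, the only short separating curves cutting off the end with the right genus. Since lengths are nearly preserved and $U$ is periodic, $f(\partial_+ Y_k)$ is a separating curve of length at most $L\ell(\partial P)$. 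Only finitely many such curves exist up to the shift symmetry, so after enlarging the compact set, $f$ maps the sequence of separating curves $\{\partial_+Y_k\}$ to curves of bounded combinatorial complexity relative to the pieces. \emph{Step 3 (limiting argument).} Suppose $f$ is not eventually homotopic to a rigid shift on $U$. Then infinitely many $k$ yield ``defects'' $f|_{Y_k}$, viewed via markings as a mapping class $Y_k\to$ (a bounded union of pieces). By the bounded complexity from Step 2 and periodicity, some defect type recurs infinitely often, giving a fixed mapping class $g$ of a fixed compact surface $Q$ built from pieces, $g$ not homotopic to the rigid map, with $\ell(g(\gamma))\stackrel{L_k}{\asymp}\ell(\gamma)$ for all $\gamma\in\Gamma_P$ and $L_k\to1$. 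Hence $\ell(g(\gamma))=\ell(\gamma)$ for the finitely many curves in the marking set on the fixed hyperbolic surface $Q$. By length-spectrum rigidity for the chosen filling set, $g$ is homotopic to an isometry of $Q$. We must then rule out nontrivial isometries or absorb them. \emph{Step 4.} Conclude that on $U$ minus finitely many pieces, $f$ is isotopic to $\phi_{k+j}^{-1}\circ\phi_k$ on each $Y_k$, possibly composed with a fixed isometry of $P$ compatible with $\lambda$, hence an isometry. Isotope $f$ on the remaining compact part to be quasiconformal, as in \Cref{lem:tight basics}.

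The main obstacle is Step 3: converting ``asymptotically length-preserving'' into ``eventually equal to an isometry'' requires an infinite-to-finite reduction. This reduction relies on the periodicity of the compatible structure, which turns the infinitely many local behaviours into finitely many types. Another delicate point is handling finite isometry groups of $P$ and twists along $\partial Y_k$. Twists are detected because lengths of curves crossing $\partial Y_k$ would grow, contradicting $L_k\to 1$. However, an infinite product of twists each of bounded size requires exactly this recurrence argument to exclude.
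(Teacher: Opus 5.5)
\textbf{Gap in Step 2.} The claim in Step 2 is false in general, and so is the target of Steps 3--4 (that $f$ is eventually $\phi_{k+j}^{-1}\circ\phi_k$ up to a fixed isometry of $P$). Eventually isometric elements of $\Mod_0(X)$ need not send pieces to pieces.

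For a counterexample, let $P=P'\cup P'$ be two isometric copies of a block $P'$ whose two boundary geodesics have equal length, glued by an isometry. Then every compatible neighborhood $U=Y_1\cup Y_2\cup\cdots$ is periodic with half the period of $\cR$. Suppose $S$ has at least two ends. Take the isometric shift of $U$ by one copy of $P'$, combine it with the inverse shift near a second end, and extend over the compact core (the genus counts match). This is an eventual isometry, hence lies in $\Mod_0(X)$. Yet it sends $\partial_+Y_k$ to the curve separating the two halves of a piece, which is not any $\partial_+Y_{k+j}$.

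Your justification, that piece boundaries are essentially the only short separating curves of the right genus, fails for the same reason: a periodic end has many bounded-length separating curves cutting off the end. Likewise, in Step 3 the isometric embedding of $P\cup_\lambda P$ produced by length rigidity need not have image a union of pieces. So there is nothing to ``rule out'', and the contradiction you aim for cannot be reached.

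\textbf{Repair, and comparison with the paper.} The argument can be fixed because the theorem asks only for an isometry, not rigidity.
\begin{enumerate}
\item Keep from Step 2 only the bounded-complexity statement. The images under $f$ of a connected filling set in $Y_k\cup Y_{k+1}$ have bounded length, so they lie in boundedly many consecutive pieces. Up to the isometric shift, their length tuples therefore range over a finite set.
\item Call $k$ a defect when $f|_{Y_k\cup Y_{k+1}}$ is not homotopic to an isometric embedding. Finiteness of the tuples, together with tightness constants $L_k\to 1$ from \Cref{cor:nearly isometric}, forces the image length tuple to equal the original one for all large $k$.
\item Injectivity of the length map then gives an isometric embedding on each such pair $Y_k\cup Y_{k+1}$.
\item These embeddings agree on overlaps, since homotopic isometric embeddings of a hyperbolic surface with geodesic boundary coincide. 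So they glue to an isometry on a neighborhood of the end.
\end{enumerate}

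This corrected argument is essentially the paper's, rephrased as a contradiction. The paper packages periodicity as \Cref{lem:discrete lengths}: the length spectrum of a compatible structure is discrete and closed. It chooses $K$ below the ratio gaps among lengths up to $R$. \Cref{cor:nearly isometric} then forces exact length equality for the shifted filling curves on every $Z_k=Y_k\cup Y_{k+1}$ outside a compact set. This gives an isometry on each $Z_k$ directly, without a contradiction argument and without ever needing to locate $f(Y_k)$.
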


We first need a preliminary lemma. Recall that the {\em (unmarked) length spectrum} of a hyperbolic surface $X$ is the set of the lengths of all closed geodesics on $X$. 

\begin{lemma} \label{lem:discrete lengths}
If $X$ is a compatible hyperbolic structure, then the length spectrum (without multiplicities) is a discrete, closed subset of $\mathbb R$.
\end{lemma}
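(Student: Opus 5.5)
Fix $L>0$; it suffices to show that only finitely many distinct values of the length spectrum lie in $[0,L]$, and that $0$ is not an accumulation point. I will show that the set of geodesic lengths below $L$ is finite.

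Write $X = Z \cup U_1 \cup \dots \cup U_n$, where $Z$ is a compact suited subsurface and each $U_k = Y^k_1\cup Y^k_2\cup\cdots$ is a compatible neighborhood of the end $\E_k$. All pieces $Y^k_i$ with $i\ge 1$ are isometric to $P$ via the markings. The boundary curves $\partial_\pm Y^k_i$ are all isometric to one fixed boundary curve of $P$. Since the geodesic representative of a boundary curve lies in the closure of either piece containing it, I expect these boundaries to be geodesics of a common length $b$. At worst, pass to geodesic representatives: compatibility plus the fact that the identification is an isometry on both sides should force the boundary curves to be geodesic.

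The key geometric input is a uniform collar. Since every piece is isometric to $P$, the injectivity radius is bounded below on $\bigcup U_k$ by some $r_0>0$, and it is also bounded below on the compact set $Z$. So $X$ has injectivity radius at least some $r>0$ everywhere, and every closed geodesic has length at least $2r$. This rules out accumulation at $0$.

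Now take a closed geodesic $\gamma$ with $\ell_X(\gamma)\le L$. It has diameter at most $L/2$. Each piece has diameter bounded by $D = \mathrm{diam}(P)$, and adjacent pieces meet along a curve. Hence $\gamma$ meets at most $N = N(L,D,b)$ consecutive pieces, or it is contained in a metric neighborhood of $Z$. Concretely, crossing an entire piece from $\partial_+$ to $\partial_-$ costs at least the distance between the two boundary components of $P$, which is a positive constant $c$. So $\gamma$ is contained in a union $W$ of $Z$-adjacent or consecutive pieces, with at most $L/c + 2$ pieces in total.

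Up to isometry there are only finitely many such subsurfaces $W$. Any union of $m$ consecutive pieces deep in some $U_k$ is isometric, via the markings, to the union $Y^k_1\cup\cdots\cup Y^k_m$, because the gluings are by $\lambda$, which is an isometry. The remaining subsurfaces are those meeting the compact $Z$ or the first few pieces, and there are finitely many of them. Each $W$ is compact, so it contains only finitely many closed geodesics of length at most $L$, by the standard discreteness of the length spectrum for compact (or finite-type) hyperbolic surfaces. An isometry of $W$ carries geodesics of $X$ lying in $W$ to geodesics of $X$, since geodesicity is local and $\gamma$ stays in the interior or along the geodesic boundary. Therefore the set of lengths below $L$ is contained in a finite union of finite sets.

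The main obstacle is the bookkeeping in the previous step: making precise that a short geodesic lies in a bounded number of consecutive pieces, and handling geodesics that run along or cross the boundary curves. I would handle this using the positive lower bound $c$ on the distance between $\partial_+P$ and $\partial_-P$. I would also use geodesic boundaries, which follow from compatibility, so that the isometries between the translated windows are genuine isometries of subsurfaces with geodesic boundary.
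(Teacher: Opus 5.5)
Your proposal is correct and is essentially the paper's argument: split off a compact core, use the positive crossing distance of a piece (the paper's uniform collar) to trap a geodesic of length at most $L$ in boundedly many consecutive pieces, and use the shift isometries $Y_1\cup\cdots\cup Y_m \to Y_{j+1}\cup\cdots\cup Y_{j+m}$ to reduce to finitely many compact windows. The detour about geodesic boundaries and the injectivity-radius bound are not needed. The window maps are Riemannian isometries, so they carry closed geodesics of $X$ in one window to closed geodesics of $X$ of the same length whatever the boundary looks like, and finiteness of lengths in each $[0,L]$ already rules out accumulation at $0$.
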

\begin{proof}
Suppose $U_1,\ldots,U_n$ are, respectively, compatible neighborhoods of the ends $\E_1,\ldots,\E_n$ of $X$ and let $Z = \overline{X\setminus (U_1 \cup \cdots \cup U_n)}$ be the complementary compact subsurface.  Given $R > 0$ and $j=1,\ldots,n$ let $U_j(R) \subset U_j$ be a compatible neighborhood such that the distance from $U_j(R)$ to $\partial U_j$ is at least $R$, and denote by $Z_R = \overline{X \setminus (U_1(R) \cup \cdots \cup U_2(R))}$ the complementary compact subsurface.

Now suppose $\gamma$ is a geodesic in $X$ of length at most $R$.  Then either $\gamma$ is contained in $Z_R$ or is contained in some $U_j$.  Since $U_j$ is obtained by isometrically gluing copies of the fixed hyperbolic structure on $P$ using the isometry $\lambda$, it follows that if $U_j = Y_1 \cup Y_2 \cup \ldots$, then for any $k,m > 0$, there is an isometry $Y_1\cup \ldots \cup Y_k \to Y_{m+1} \cup \ldots \cup Y_{m+k}$ in the restriction of the hyperbolic structures from $X$.  Now any length $R$ curve is contained in $k$ consecutive surfaces $Y_{m+1} \cup \ldots \cup Y_{m+k}$, where $k$ depends only on $R$ (to see this, note that the boundary curves of each $Y_k$ admits a uniform collar neighborhood, and to pass from a piece to an adjacent one, it must cross this collar).  In particular, $\ell_X(\gamma) = \ell_X(\gamma')$ for some $\gamma$ that is contained in $Y_1 \cup \ldots \cup Y_k$.  Since this surface is compact, there are only finitely many {\em lengths of} curves outside $Z_R$ of length at most $R$.

\end{proof}

We are now ready to prove \Cref{thm:rigid to tight}: 

\begin{proof}[Proof of \Cref{thm:rigid to tight}]
For each end $\mathcal E$ of $X$, choose a compatible neighborhood $U_{\E} = Y_1^\E \cup Y_2^\E \cup \ldots$.  Let $f_\E \in \SHo(S)$ be an element mapping $U_\E$ isometrically into itself, sending $Y_k^\E$ isometrically to $Y_{k+1}^\E$, for all $k \geq 1$.

Let $Z_k^\E = Y_k^\E \cup Y_{k+1}^\E$ be the connected union of two consecutive pieces in $U_\E$.  There are finitely many closed curves $\gamma_1^\E,\ldots,\gamma_d^\E$ in $Z_1^\E$ whose lengths determine the hyperbolic structure on $Z_1^\E$.  Then for all $k \geq 1$, the collection $f_\E^k(\gamma_1^\E),\ldots,f_\E^k(\gamma_d^\E)$ is a set of curves in $Z_k^\E$ whose lengths determine the hyperbolic structure on $Z_k^\E$.  Let $R$ be the maximum value of the finitely many {\em lengths} of these curves $\{\ell_X(f_\E^k(\gamma_i^\E) \mid \E,i,k\}$.
By \Cref{lem:discrete lengths}, the set of lengths of closed geodesics $X$ is a closed discrete set.  Suppose $R$ is the $m^{th}$ shortest length of a geodesic in $X$, and write
\[ r_0 < r_1 < \cdots < r_{m-1} = R < r_m.\]
(Thus, $r_m$ is the next shortest length of a geodesic in $X$ after $R$.)  
Now, let $K $ be any real number with
\[1 < K < \inf\left\{ \tfrac{r_j}{r_{j-1}} \mid j = 1,\ldots,m \right\}.\]

Suppose $f \in \Mod_{0}(X)$ is any element.  By \Cref{cor:nearly isometric} there is a compact set $C \subset X$ so that all closed geodesics $\gamma$ in $X \setminus C$ have
\[ \ell_X(\gamma) \stackrel{K}{\asymp} \ell_X(f(\gamma)).\]
In particular, if $\gamma$ is a geodesic in $X\setminus C$ and $\ell_X(\gamma) \leq R$, then $\ell_X(f \cdot \gamma) = \ell_X(\gamma)$.  Without loss of generality, we may assume
\[ \overline{X \setminus C} = U_{\E_1} \cup \ldots \cup U_{\E_n}.\]

It follows that for every end $\E$ and $Z_k^\E \subset U_\E$,
\[ \ell_X(f \cdot f_\E^k(\gamma_i^\E)) = \ell_X(f_\E^k(\gamma_k^\E)).\]
Since the lengths of $f_\E^k(\gamma_1^\E),\ldots,f_\E^k(\gamma_d^\E)$ determine the hyperbolic structure on $Z_k^\E$, the same is true of their $f$--images in $f(Z_k^\E)$. Thus $f$ can be isotoped so that it restricts to an isometry of $Z_k^\E$ for every end $\E$ and every $k \geq 1$.  Since the $Z_k^\E$ cover all of $U_\E$, $f$ is an isometry on each neighborhood $U_\E$, and hence $f$ is eventually isometric.
\end{proof}

\begin{corollary} \label{cor:rigid finite index}
If $X$ is a compatible hyperbolic structure, then
\[ [\Mod_{0}(X):\SHo(S)] < \infty.\]
\end{corollary}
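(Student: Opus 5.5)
By \Cref{lem:tight basics} we already have $\SHo(S) < \Mod_0(X)$, so only finiteness of the index remains. Pass first to the pure subgroups: $\PMod_0(X)$ has finite index in $\Mod_0(X)$, and $\PSHo(S)$ has finite index in $\SHo(S)$. It therefore suffices to show that $\PMod_0(X) \cap \SHo(S)$ has finite index in $\PMod_0(X)$. The idea is to show that every $f \in \PMod_0(X)$ is, after composing with an element of $\SHo(S)$, supported on a fixed compact set. We then argue that only finitely many cosets can arise.

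Fix compatible neighborhoods $U_{\E} = Y^\E_1 \cup Y^\E_2 \cup \cdots$ of each end. Let $f \in \PMod_0(X)$. By \Cref{thm:rigid to tight} we may take $f$ to be an isometry outside some compact set. Hence for each end $\E$, deep enough in $U_\E$, $f$ maps $Y^\E_k$ isometrically into $U_\E$.

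The key step is to show that this isometry is rigid after shifting. Concretely, we want an integer $j_\E$ and a further finite-order ambiguity such that $f(Y_k^\E) = Y^\E_{k+j_\E}$ and $f|_{Y_k^\E} = \phi_{k+j_\E}^{-1}\circ \iota \circ \phi_k$, where $\iota$ is an isometry of the fixed hyperbolic structure on $P$ compatible with $\lambda$.

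To get this, use that $f$ is an isometry of the end $U_\E$ minus a compact set, which is a periodic hyperbolic surface built from copies of $P$. The pieces are distinguished geometrically, for instance by the systole pattern or by the finitely many short curves in the compact region $Z^\E_k$ from \Cref{lem:discrete lengths}. An isometry must therefore permute the boundary curves $\partial_\pm Y_k$ up to a bounded ambiguity. I expect this to be the main obstacle: $f$ need not carry piece boundaries to piece boundaries on the nose. The fallback is the following. The isometry is determined by its restriction to one $Z_k^\E$, and the image $f(Z_k^\E)$ lies within bounded distance of some $Z_m^\E$. Since there are only finitely many isometric embeddings of a compact hyperbolic surface into $Z_{m-1}^\E\cup Z_m^\E \cup Z_{m+1}^\E$ up to the shift, the set of "germs at $\E$" of such isometries, modulo shift, is finite.

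Granting this, compose $f$ with $\prod_\E f_\E^{-j_\E}$, where the $f_\E \in \SHo(S)$ are the shift maps from the proof of \Cref{thm:rigid to tight}. The result $g$ is an isometry near each end that commutes with shifts, equivalently one determined by its germ. The germs of such $g$ form a finite set $G_\E$, namely the isometries of $P$ respecting $\lambda$, or more generally the finitely many germs found above. We thus obtain a map $\PMod_0(X) \to \prod_\E G_\E$ modulo shifts, which behaves as a homomorphism on germs, and whose fibers consist of elements that agree with rigid maps near every end. Any element whose germ is trivial lies in $\SHo(S)$: it is rigid outside a compact set after isotopy, since the remaining part is compactly supported. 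Hence $\PMod_0(X)\cap \SHo(S)$ contains the kernel of a homomorphism to a finite group, and has finite index. This gives $[\Mod_0(X):\SHo(S)]<\infty$.
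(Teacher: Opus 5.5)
Your proposal has a genuine gap. Everything after ``Granting this'' rests on the key step, which is false in general, and the fallback does not supply what your final paragraph uses. Write $\sigma_\E$ for the germ at $\E$ of the rigid shift.

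Two kinds of end isometries break the argument.
\begin{enumerate}
\item Suppose each piece is the union of two isometric halves, and the end is a periodic stack of these halves. Then the end has a half-shift isometry $\rho$ with $\rho^2=\sigma_\E$. For $n\ge 2$, an element equal to $\rho$ near one end and $\rho^{-1}$ near another is eventually isometric, hence lies in $\PMod_0(X)$. But its flux at that end is $\equiv g/2 \pmod g$, so no composition with elements of $\SHo(S)$ makes it carry pieces onto pieces.
\item Suppose an isometry $a$ of $P$ satisfies $a^{-1}|_{\partial_+P}\circ\lambda=\lambda\circ a|_{\partial_-P}$ with $a^2\neq\id$ (suitable order-$3$ symmetries do this). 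Then the map $\iota$ equal to $\phi_k^{-1}a^{(-1)^k}\phi_k$ on $Y_k$ is an isometry of the end preserving every piece, but it is not of your form with a fixed isometry of $P$. One checks $\sigma_\E\iota\sigma_\E^{-1}=\iota^{-1}\neq\iota$, so $\iota\sigma_\E\iota^{-1}=\sigma_\E\iota^{-2}\notin\langle\sigma_\E\rangle$.
\end{enumerate}
Hence $\langle\sigma_\E\rangle$ need not be normal in the group $\mathcal G_\E$ of germs at $\E$, and your residual $g$ need not commute with shifts. Your normalization is also not a homomorphism: whichever side you strip the shift from, one of $\iota\sigma_\E$, $\sigma_\E\iota$ is sent to $\iota^{-1}$ rather than $\iota$. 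So the claim that $\PSHo(S)$ contains the kernel of a homomorphism to a finite group is not established.

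There are two further problems. Your fallback shifts the domain $Z_k^\E$ and the target $Z_m^\E$ independently and never controls $m-k$. That yields finiteness only up to two-sided shifting (double cosets of $\langle\sigma_\E\rangle$), not finiteness of $\mathcal G_\E/\langle\sigma_\E\rangle$. Also, you cannot apply $f_\E^{-j_\E}$ end by end, because every element of $\PSHo(S)$ has total flux zero.

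The missing ingredient is the flux. This is how the paper makes the normalization uniform and avoids needing any homomorphism. Given $f_1,f_2,\ldots\in\PMod_0(X)$, precompose by elements of $\PSHo(S)$ so that all $\Phi(f_j)$ are bounded. By \Cref{thm:rigid to tight}, for any $N$ there is a compatible neighborhood $U_\E$ on which $f_1,\ldots,f_N$ are isometric embeddings. Bounded flux means each $f_j$ moves a piece $Y_\E\subset U_\E$ only a bounded amount; the paper arranges $f_j(Y_\E)\cap Y_\E\neq\emptyset$. By periodicity, only uniformly boundedly many isometric embeddings of $Y_\E$ land there. So for $N$ large two of them agree on $Y_\E$, hence on all of $U_\E$, since isometries agreeing on an open set agree on a connected domain. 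Arranging this at every end simultaneously, some $f_j^{-1}f_i$ is compactly supported, so $f_i,f_j$ lie in the same coset of $\PSHo(S)$.

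If you want to keep your germ viewpoint, the correct formulation is as follows. The germ map $\Psi\colon\PMod_0(X)\to\prod_\E\mathcal G_\E$ is a homomorphism with $\Psi^{-1}(\prod_\E\langle\sigma_\E\rangle)=\PSHo(S)$. So it suffices that $[\mathcal G_\E:\langle\sigma_\E\rangle]<\infty$, and no normality is needed. This follows by choosing in each coset $\gamma\langle\sigma_\E\rangle$ the representative with flux in $[0,g)$, which moves every deep piece to one meeting it. Then apply your finiteness of isometric embeddings at a common deep piece.
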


\begin{proof}
Since $S$ has finitely many ends, it suffices to prove that $\PSHo(S)$ has finite index in $\PMod_{0}(X)$.  Suppose $f_1,f_2,f_3,\ldots \in \PMod_{0}(X)$ is any infinite set of elements.  To prove the index is finite, we must show that there are two 
of these elements that lie in the same coset of $\PSHo(S)$.
Let $\Phi$ be the flux homomorphism described in \Cref{ssec:shogrps}.
By precomposing with elements of $\PSHo(S)$, we can assume that every component of $\Phi(f_j)$ is less than the genus of the piece, for all $j$.  In particular, for any piece $Y$ in a combinatorial neighborhood of an end, $f_j(Y) \cap Y \neq \emptyset$.

By \Cref{thm:rigid to tight}, every $f_j$ is eventually isometric.  Given an end $\E$ and positive integer $N$, there is a compatible neighborhood $U_\E$ of $\E$ so that $f_1,\ldots,f_N$ all restrict to isometric embeddings of $U_\E$ into $X$.  Fixing any piece $Y_\E \subset U_\E$, since $f_j(Y_\E) \cap Y_\E \neq \emptyset$, if $N$ is sufficiently large it follows that the there are $i \neq j$, with $i,j \leq N$, so that the restriction of $f_i$ and $f_j$ to $Y_\E$ agree.  But because these are isometric embeddings on $U_\E$, $f_i$ and $f_j$ agree on $U_\E$.  By taking $N$ even larger, we can assume this holds for some $i \neq j$ and {\em every} end.  But then $f_if_j^{-1}$ is compactly supported, hence in $\PSHo(S)$.
\end{proof}

\section{More general hyperbolic structures}\label{sec:genhypbp}

We continue to consider a fixed rigid structure $\cR$ on $S$, and in this section we introduce a weaker notion of having ``nice'' geometry with respect to $\cR$. We then prove a number of structural consequences of this definition that will be used to prove \Cref{thm:mainbddpieces} in \Cref{sec:bpandsho}.

\subsection{Bounded pieces}\label{ssec:bddpieces}
Let $X$ be a hyperbolic Riemann surface structure on $S$. Throughout this section, we will assume that the boundary components of the pieces of $\cR$ are geodesics with respect to $X$. 

Given a connected subsurface $\Sigma \subset X$ (with geodesic boundary), it will be important to clarify the distinction between the {\em intrinsic} geometry of $\Sigma$, by which we mean the geometry of the induced {\em path} hyperbolic metric on $\Sigma$, and the {\em extrinsic} geometry of $\Sigma$, by which we mean the {\em subspace} metric (which need not be a path metric as minimizing geodesic segments in $X$ between points in $\Sigma$ may not lie in $\Sigma$). 

\begin{definition}
    A hyperbolic structure $X$ on $S$ has \emph{$D$-bounded pieces} for some $D>0$ if the hyperbolic diameter of each piece in $S$, measured as a subset of $X$, is bounded above by $D$. 
    \end{definition}

Whenever the constant $D$ is not relevant to the discussion, we will omit it and simply say that $X$ has {\em bounded pieces}. 
We emphasize that having bounded pieces is an {\em extrinsic} geometric property.
In \Cref{sec:examples}, we will construct various examples of surfaces with bounded pieces.  Some of these examples have pieces with unbounded {\em intrinsic} diameter.

We will prove a few preliminary lemmas highlighting some basic consequences of having bounded pieces. First, recall that a hyperbolic surface $Z$ (possibly with non-empty boundary), has (intrinsic) {\em $B$-bounded geometry} if the following two conditions hold: 

\begin{itemize}
    \item Through every $z\in Z$ there exists a homotopically nontrival closed curve of length at most $B$; 
    \item The infimum of the lengths of closed geodesics is greater than $\frac{1}{B}$.
\end{itemize}

We remark that, in the case when the surface has no boundary or cusps, having $B$--bounded geometry for some $B$ is equivalent to requiring that the injectivity radius of $X$ is uniformly bounded above and below at every point.  When $B$ is unimportant, we say that $Z$ has bounded geometry.

\begin{lemma}\label{lem:bddpiecesbddgeometry}
 If $X$ has bounded pieces, then it has bounded geometry.
\end{lemma}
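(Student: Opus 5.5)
We need to verify the two conditions of $B$-bounded geometry for $X$: an upper bound on the length of a shortest essential loop through every point, and a positive lower bound on the lengths of all closed geodesics. Throughout we use that $X$ has $D$-bounded pieces and that the boundary curves of pieces are geodesics. We also use that $O$ is compact, so it contributes only finitely many curves and a compact region, which can be absorbed into the constants.

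\emph{Upper bound.} Let $z \in X$. If $z$ lies in the compact core $O$, a bound holds by compactness; we may also note that $O$ lies within bounded distance of an adjacent piece. Otherwise $z$ lies in some piece $Y_j$. Since $P$ has positive genus, or at least is not an annulus (the ends are accumulated by genus), $Y_j$ contains an essential nonperipheral closed curve $\alpha$. Such a curve can be realized as a loop based at $z$ consisting of segments of $X$-length at most $D$, since any two points of $Y_j$ are at $X$-distance at most $D$.

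The subtlety is that the $X$-geodesic segments between points of $Y_j$ may leave $Y_j$. So I would argue instead as follows. The geodesic boundary curve $\partial_+ Y_j$ has length at most something like $2D + $const? Not immediately, since a closed geodesic contained in a set of diameter $D$ can still be long.

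The easier route is a ball argument. The ball $B(z, D+1)$ contains the whole piece $Y_j$, and $Y_j$ contains a nontrivial topological subsurface (a genus handle). If the injectivity radius at $z$ exceeded $D+1$, the ball $B(z,D+1)$ would be an embedded disk. A disk cannot contain $Y_j$, because $Y_j$ has essential curves that are nontrivial in $\pi_1(X)$. Hence $\mathrm{inj}(z) \le D+1$, which gives an essential loop through $z$ of length at most $2D+2$.

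\emph{Lower bound.} This is the main obstacle. Suppose $\gamma$ is a closed geodesic of length $\epsilon$ very small. By the collar lemma, $\gamma$ has an embedded collar of width $w(\epsilon) \to \infty$ as $\epsilon \to 0$.

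First, $\gamma$ must intersect some piece or the core; if it meets the core, a compactness bound applies. The geodesic $\gamma$ is either a boundary curve of a piece, or it crosses or sits inside pieces. In either case there is a piece $Y$ meeting $\gamma$, together with points of $Y$ lying in the collar at distance comparable to $w(\epsilon)$ from $\gamma$. To see this, note that $Y$ contains essential curves not homotopic to $\gamma$. Such a curve cannot stay inside the thin part of the collar, because the only essential curves in the collar are powers of $\gamma$. Hence $Y$ must exit the collar, and its points reach distance at least about $w(\epsilon)$ from $\gamma \cap Y$.

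This contradicts $\mathrm{diam}_X(Y) \le D$ once $w(\epsilon) > D$. We must also handle the case where $\gamma$ is homotopic to $\partial_\pm Y$. In that case we use the adjacent piece sharing that boundary, which likewise contains nonperipheral curves and so also exits the collar. This yields $\epsilon \ge \epsilon_0(D) > 0$. Taking $B$ to be the larger of $2D+2$, $1/\epsilon_0$, and the core's constants completes the proof.
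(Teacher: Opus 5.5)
Your proposal is correct and is essentially the paper's argument. The upper bound comes from the fact that a ball of radius about $D$ about a point of a piece contains the whole piece and so cannot be an embedded disk; the lower bound comes from the collar lemma, since a piece meeting a very short geodesic must reach the edge of its wide collar, contradicting the extrinsic diameter bound $D$. Your connectedness formulation handles the paper's two cases (geodesic inside a piece versus crossing pieces) at once, so your separate treatment of $\gamma \simeq \partial_\pm Y$ is unnecessary, and the collar lemma does apply because sufficiently short closed geodesics are automatically simple.
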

\begin{proof} Suppose first that there were arbitrarily short geodesics $\{\alpha_k\}$ on $X$. These have collar neighborhoods $\{N(\alpha_k)\}$ with arbitrarily large diameter so that $N(\alpha_k)$ is disjoint from any simple closed geodesic that $\alpha_k$ fails to intersect.  If infinitely many $\alpha_k$ are contained in  pieces (possibly as a boundary components), then at least half the collars are contained in the pieces, violating the assumption of bounded pieces.  The alternative is that infinitely many $\alpha_k$ cut across pieces, again violating the assumption of bounded pieces. Thus there is a lower bound to the length of closed geodesics.

Next, since the diameter of every piece is uniformly bounded above by some $D>0$, the $D$--neighborhood of any point in a piece of $X$ cannot be simply connected. In particular, the injectivity radius at such a point must be less than $D$.  Since the complement of the union of all the pieces is compact, we obtain a uniform upper bound on the injectivity radius at any point of $X$, and we are done.  
\end{proof}

\subsection{Envelopes and compact exhaustions}\label{ssec:envelopes}

From now on we assume that $X$ has $D$--bounded pieces. Thus $X$ also has bounded geometry by \Cref{lem:bddpiecesbddgeometry}. In this subsection we will show that while pieces in $X$ may have poorly behaved intrinsic geometry, the extrinsic control coming from $X$ having bounded pieces allows one to ``envelop'' pairs of pieces by slightly larger subsurfaces with well controlled intrinsic geometry. Recall that distinct, intersecting pieces are called adjacent.

\begin{lemma}\label{lem:geombddsupsurface}
     Suppose that $X$ has $D$--bounded pieces.  Then there exist constants $\kappa = \kappa(D) < 0$ and $s = s(D) \geq D >0$ such that for any two adjacent pieces $Y$ and $Y'$, there exists an embedded subsurface $Z \subset X$ with the following properties: 
     \begin{enumerate}
         \item $Z$ has geodesic boundary;
         \item $\chi(Z) \geq \kappa$;
         \item $Z$ has (intrinsic) $s$-bounded geometry; 
         \item $Y \cup Y' \subset Z$;
         \item $Z$ is contained in the $s$--neighborhood of $Y \cup Y'$
     \end{enumerate}

\end{lemma}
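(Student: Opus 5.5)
Our plan is to build $Z$ as a metric neighborhood of $Y\cup Y'$ that is then ``filled'' and tightened to geodesic boundary, and to control all of its invariants using only the bounded geometry from \Cref{lem:bddpiecesbddgeometry}. Fix $B$ so that $X$ has $B$-bounded geometry. Then the injectivity radius of $X$ is bounded below by some $\epsilon=\epsilon(B)>0$ (no geodesics shorter than $1/B$, and $X$ has no cusps) and above by $B$. Set $N$ to be the closed $r$-neighborhood of $Y\cup Y'$ for a fixed $r=r(D,B)$, say $r=2B$. The set $Y\cup Y'$ has extrinsic diameter at most $2D$, so $N$ lies in a ball of radius $2D+r$ about a point $x_0\in Y$.

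\textbf{Step 1 (bounded topology).} A maximal $\epsilon$-separated net in $N$ has at most $M(D,B)$ points, by comparing areas of disjoint $\epsilon/2$-balls (these are embedded, with area bounded below by a function of $\epsilon$) inside a ball of radius $2D+r+\epsilon$. That ball has area at most that of a hyperbolic disk of the same radius, because the exponential map is a covering. Using the nerve of the $\epsilon$-balls, or a triangulation subordinate to it, we obtain a compact subsurface $N'\supset Y\cup Y'$, within a bounded distance of $Y\cup Y'$, with at most $C(D,B)$ cells. Hence $|\chi(N')|$ and the number of boundary components are bounded.

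\textbf{Step 2 (geodesic boundary).} Let $Z$ be the subsurface obtained from $N'$ by adding every complementary disk and every annulus component that is bounded by homotopic curves of $\partial N'$, and then isotoping $\partial Z$ to geodesics. The subsurface is essential because $Y\cup Y'$ is essential and contains genus, and $X$ has no cusps. This gives (1). Each boundary curve of $N'$ has length bounded by a function of $(D,B)$, so the geodesic representatives are short. Moreover, a geodesic lies within bounded distance of any curve homotopic to it of bounded length, by the thin-triangle and collar estimates in bounded geometry. This gives (5). The boundary of $Y\cup Y'$ is geodesic and lies in the interior of $N'$, so the tightening does not cut into $Y\cup Y'$; this gives (4). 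The Euler characteristic only increases when disks and annuli are filled, so (2) holds with $\kappa=-C$.

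\textbf{Step 3 (intrinsic bounded geometry).} Closed geodesics in $Z$ are geodesics of $X$, so they have length at least $1/B$. For the other condition we argue as follows. Every point $z$ of $Z$ lies at bounded $X$-distance from $Y\cup Y'$. The goal is a homotopically nontrivial loop in $Z$ through $z$ of length bounded in terms of $(D,B)$.

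This is the main obstacle. The extrinsic short loop at $z$ might leave $Z$, and near a long thin part of $Z$ it might be essential in $X$ but not in $Z$. We would handle it as follows. First, use the collar lemma to ensure that points of $Z$ near $\partial Z$ have a short boundary curve nearby. Second, for deep points, choose $r$ large relative to $B$ so that the $B$-ball about $z$ lies in $N'$ before tightening. The short essential loop in $X$ then lies in $N'$, and it remains essential in $Z$, because the only curves we killed bound disks in $X$. We would then check that the geodesic tightening moves these loops only a bounded amount. Finally, set $s$ to be the maximum of $D$ and all the constants produced above.
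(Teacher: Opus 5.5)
Your argument works, but it follows a different route from the paper.

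\textbf{The paper's route.} The paper argues in the universal cover. It takes a component $\widetilde Z_0$ of the preimage of the ball $N_D(x)$ about a point $x\in Y\cap Y'$, and lets $G<\Gamma$ be its stabilizer. This $G$ is generated by the boundedly many $g_i$ with $N_D(\widetilde x)\cap g_iN_D(\widetilde x)\neq\emptyset$. It then defines $Z$ as the image of the convex core of $\hyp/G$, after a limit-set linking argument shows that its interior embeds in $X$. Geodesic boundary is then automatic. Property (2) follows from $\chi(Z)=1-\mathrm{rk}(G)$, (4) from convexity of the lift of $Y\cup Y'$, and (3) and (5) from $G$ being generated by elements moving $\widetilde x$ at most $2D$.

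\textbf{Your route.} You stay in $X$. You bound the complexity of a thickening using a net and the nerve lemma, fill in complementary disks and annuli, and tighten. The filled annuli correspond exactly to the paper's identification of boundary components of $Z_1$ with the same image.

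\textbf{Comparison.} The paper gets (1), (2) and (4) essentially for free but must prove embeddedness. Your version needs more bookkeeping, but it gives an explicit bound $L_0$ on the lengths of the components of $\partial Z$. That is the kind of control used later in \Cref{lem:good gut-offs}.

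\textbf{A point to add in (5).} You need not only that each boundary geodesic $\gamma$ stays near the curve $c$ it replaces, but also that the region swept out between them does. This follows in the universal cover. The lift of $c$ lies in the $d$-neighborhood of the axis of $\gamma$, where $d$ depends only on the length bound for $\partial N'$ and on $B$. The region between them then also lies in that convex neighborhood.

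\textbf{Step 3.} This step is more complicated than it needs to be, and as written it aims at the wrong set. You need a short loop in $Z$, not in $N'$, so choosing $r$ large relative to $B$ does not help. Since $Z$ has geodesic boundary, it is $\pi_1$-injective. So a loop in $Z$ that is essential in $X$ is automatically essential in $Z$; the only danger is leaving $Z$. Split into two cases:
\begin{enumerate}
\item If $d_X(z,\partial Z)>B/2$, the closed $B/2$-ball about $z$ is connected and misses $\partial Z$. It therefore lies in $Z$ and contains an essential loop through $z$ of length at most $B$.
\item Otherwise, a shortest path from $z$ to $\partial Z$ stays in $Z$ until it first meets a boundary geodesic $\beta$. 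Going out along this path, once around $\beta$, and back gives a loop in $Z$ through $z$. It has length at most $B+L_0$ and is freely homotopic to $\beta$.
\end{enumerate}
No collar lemma is needed.
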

We will call a surface $Z$ as in the lemma an {\em envelope of $Y \cup Y'$}. Given a piece $Y$, an envelope of $Y$ is an envelope of $Y \cup Y'$, where $Y'$ is adjacent to $Y$ and contained in the neighborhood of the end defined by $Y$.  Observe that the diameter of an envelope is bounded by $2D+2s$.

\begin{proof} \Cref{lem:bddpiecesbddgeometry} provides a lower bound on the length of the shortest geodesic of $X$, hence also in any subsurface $Z \subset X$.  

Given a pair $Y,Y'$ of adjacent pieces, first observe that $Y \cup Y'$ is a compact, connected surface with geodesic boundary.  Let $x \in Y \cap Y'$ be any point, and observe that $Y \cup Y'$ is contained in $N_D(x)$, the $D$--ball about $x$.  Let $\widetilde Z_0 \subset \hyp$ be a component of the preimage of $N_D(x)$ in the universal cover $\hyp$ of $X = \hyp/\Gamma$, where $\Gamma$ is the Fuchsian group uniformizing $X$, and let $\widetilde x \in \widetilde Z_0$ be a point in the preimage of $x$. 

Let $G < \Gamma$ denote the stabilizer of $\widetilde Z_0$ in $\Gamma$, and let $g_1,\ldots,g_k \in \Gamma$ be those elements so that $N_D(\widetilde x) \cap g_i N_D(\widetilde x) \neq \emptyset$.  We observe that $G$ is generated by $g_1,\ldots,g_k$, and
\[ \widetilde Z_0 = G \cdot N_D(\widetilde x).\]
From the discreteness of $\Gamma$ and the uniform lower bound on the length of the shortest geodesic in $X$, we deduce that $k$ is bounded by a function of this lower bound, and hence by a function of $D$ (and the rigid structure), by \Cref{lem:bddpiecesbddgeometry}.

Since $\widetilde Z_0$ contains a component of the preimage of $Y$, $G$ contains a conjugate of $\pi_1Y$, so is nonelementary.
The limit set of $G$ is also the accumulation set of $\widetilde Z_0$, and $G$ is nonelementary, it follows that no two components of the preimage of $N_D(x)$ in $\hyp$ can have the same accumulation set.  Therefore, $G$ is the stabilizer of its limit set.  Let, $\widetilde Z_1$ denote the convex hull of the limit set of $G$, and $Z_1 = \widetilde Z_1/G$.  

We claim that the interior of $Z_1$ embeds into $X$.  To see this, we note that since $G$ is the stabilizer of $\widetilde Z_1$, the only way the interior of $Z_1$ can fail to embed is if some element $g \in \Gamma \setminus G$ has $g \cdot \mbox{int}(\widetilde Z_1) \cap \mbox{int}(\widetilde Z_1) \neq \emptyset$.  Since $g$ cannot preserve the limit set, there are a pair of points in the accumulation set of $\widetilde Z_1$ and a pair of points in the accumulation set of $g \cdot \widetilde Z_1$ that link each other, from which we can conclude that $\widetilde Z_0$ and $g \cdot \widetilde Z_0$ intersect.  But then $\widetilde Z_0 = g \cdot \widetilde Z_0$ and hence $\widetilde Z_1 = g \cdot \widetilde Z_1$, a contradiction.  Therefore, the interior of $Z_1$ embeds into $X$.  We let $Z \subset X$ be the image of $Z_1$, which is obtained from $Z_1$ by identifying any boundary components that map to the same curve in $X$.

Since $Y \cup Y' \subset N_D(x)$, there is a component $\widetilde{Y \cup Y'}$ of the preimage of $Y \cup Y'$ which is contained in $\widetilde Z_0$.  Moreover, this component is convex, as  $Y \cup Y'$ has geodesic boundary, and hence $\widetilde{Y \cup Y'} \subset \widetilde Z_1$, hence $Y \cup Y' \subset Z_1 \subset Z$.  Additionally, we have $\chi(Z) = \chi(Z_1)= 1-\mbox{rk}(G) \geq 1-k$.  We further note that $\pi_1Z_1 = G$ is generated by elements translating a basepoint a distance at most $2D$, hence $Z_1$ has intrinsic diameter bounded by a function of $D$, thus so does $Z$, proving that $Z$ has $s$--bounded geometry for some $s \geq D$ depending only on $D$ and the rigid structure. 
\end{proof}

We will use the pigeonhole principle at various points in the proofs that follow, and the following simple fact will be useful in doing so.

\begin{lemma}\label{lem:curvecounting}
Suppose $X$ has $D$-bounded pieces. Then there exist $N,R >0$, depending only on $D$, such that if $Z$ is an envelope of a pair of adjacent pieces in $X$, then the following holds: 
\begin{itemize}
    \item There exists a closed geodesic $\gamma$ in $Z$, of length at most $R$, such that $\gamma$ fills Z. Moreover, any homeomorphism of $Z$ that preserves $\gamma$ is isotopic to the identity. 
    \item The number of curves in $X$ of length at most $R+1$ that intersect $Z$ is bounded above by $N$.\end{itemize}
\end{lemma}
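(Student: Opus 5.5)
Both parts rest on the uniform control of envelopes from \Cref{lem:geombddsupsurface}: $\chi(Z)\ge\kappa$ and intrinsic $s$--bounded geometry, with $s,\kappa$ depending only on $D$ (and the rigid structure). The plan is to reduce the first bullet to a compactness statement over a finite list of topological types of envelopes, and to prove the second bullet by a packing argument in $X$.

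\textbf{First bullet.} Since $\chi(Z)\ge\kappa$ and $Z$ has finitely many boundary components (their number is controlled by $\chi(Z)$), only finitely many topological types of $Z$ occur. For each type, the set of hyperbolic structures with geodesic boundary and $s$--bounded geometry is compact in the moduli space. Fix one type and a structure $Z$ in this set. Choose a closed curve $\gamma$ in $Z$ that fills $Z$ and has trivial stabilizer up to isotopy in the mapping class group of $Z$.

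\emph{Existence of $\gamma$.} I would take $\gamma$ to be a filling curve with enough self-intersection asymmetry, for instance a suitable product of a fixed filling word. Alternatively, take a filling pair $a,b$ and use the curve $a^m b^n$ for distinct large exponents; then any homeomorphism preserving $\gamma$ must preserve the combinatorial structure. This is the step I expect to be the main obstacle. One must rule out finite-order mapping classes fixing $\gamma$. The stabilizer of a filling curve is finite, since a mapping class fixing a filling curve has finite order. It then suffices to choose $\gamma$ avoiding the finitely many conjugacy classes of finite subgroups. For a finite-order class $h$, the set of curves fixed by $h$ is "small": by Nielsen realization these are curves invariant under an isometry. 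So one can take $\gamma$ generic, for example a curve whose geodesic representative in a hyperbolic structure with no symmetries is not invariant under any isometry. I would also allow boundary-parallel twisting to be absorbed, noting that "isotopic to the identity" is presumably meant up to boundary rotation, which is harmless.

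\emph{Uniform length bound.} Having fixed such a $\gamma$ topologically for each type, its length is a continuous function on the compact set of $s$--bounded structures. A topological identification of $Z$ with the model is chosen so that the marking is bounded, e.g.\ via short pants decompositions (Bers constant). Then $\ell_Z(\gamma)\le R$ for a uniform $R$. Since $Z\subset X$ has geodesic boundary, this is also the $X$--length of the geodesic, which lies in $Z$.

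\textbf{Second bullet.} Any closed geodesic of length at most $R+1$ meeting $Z$ lies in the $(R+1)$--neighborhood of $Z$, which has diameter at most $2D+2s+2R+2$. Lift to $\hyp$: such geodesics correspond to conjugacy classes of elements $g\in\Gamma$ translating some point of a fixed ball $B$ of radius $\rho=2D+2s+2R+2$ about $\widetilde x$ by at most $R+1$. Each such class has a representative moving $\widetilde x$ by at most $2\rho+R+1$. By the lower bound on the injectivity radius (\Cref{lem:bddpiecesbddgeometry}), the number of such elements is bounded by a volume-packing count depending only on $D$. This gives $N$.
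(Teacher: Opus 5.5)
Your route is the paper's. Its proof says only that the first bullet follows from the uniform control of envelopes in \Cref{lem:geombddsupsurface}, and the second from $\operatorname{diam}(Z)\le 2D+2s$ together with the bounded geometry of $X$ (\Cref{lem:bddpiecesbddgeometry}). Your packing argument for the second bullet is correct: the orbit $\Gamma\widetilde x$ is uniformly separated, since the systole of $X$ is bounded below and $X$ has no cusps. Your compactness argument for the length bound is also correct. This includes the point that you must choose markings in a compact subset of Teichm\"uller space and transport a fixed model curve $\gamma_0$; doing so preserves both the filling property and triviality of the stabilizer.

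The step your proposal does not actually establish is the one you flag: the existence, on each of the finitely many model surfaces, of a filling curve $\gamma_0$ with trivial stabilizer in $\Map(Z)$. The paper also takes this for granted, but none of your sketches proves it. First, if a nontrivial finite-order $h$ fixes $\gamma_0$, Nielsen realization makes $h$ an isometry of some structure $Z_h$ that depends on $h$. A symmetry-free structure $Z_0$ therefore plays no role, and ``not invariant under any isometry of $Z_0$'' is vacuous, since $Z_0$ has no nontrivial isometries. Second, finiteness of the number of conjugacy classes of finite subgroups only reduces you to finitely many representatives $h_1,\dots,h_k$. You then need the entire $\Map(Z)$--orbit of $\gamma_0$ to avoid the curves fixed by each $h_i$, which is exactly what ``generic'' was supposed to deliver. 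Third, for $a^mb^n$ the claim that a stabilizing homeomorphism preserves $a$ and $b$ is unproved. Even granting it, the joint stabilizer of a filling pair is in general a nontrivial finite group (e.g.\ a symmetry of $Z$ preserving both curves), which would still have to be excluded.

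So you either need a real argument here (the statement is true, but it is not formal), or you can observe that the only place this clause is used, \Cref{lem:creatingcompactsupport}, needs much less. Stabilizers of filling curves are finite, of order at most some $c$ depending only on the finitely many topological types. If $M>(cN)^n$, the first pigeonhole there yields more than $c^n$ indices $j$ with the same tuple $(f_j(\gamma_k))_k$. A second pigeonhole over the restrictions $(f_{j_0}^{-1}f_j)|_{Z_k}\in\mathrm{Stab}(\gamma_k)$ then produces $j\ne j''$ with $f_j^{-1}f_{j''}$ isotopic to the identity on every $Z_k$.
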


\begin{proof} The first statement is straightforward from \Cref{lem:geombddsupsurface}.  The second statement follows from the fact that $Z$ has diameter bounded above by $2s + 2D$ (\Cref{lem:geombddsupsurface}) and $X$ has bounded geometry (\Cref{lem:bddpiecesbddgeometry}).
\end{proof}

The next two corollaries allow us to use the envelope construction of \Cref{lem:geombddsupsurface} in order to find a ``well-behaved'' compact exhaustion of $X$. We note that the rest of this subsection is not actually necessary for the proof of \Cref{thm:mainbddpieces}. We include it so as to provide a better understanding of the bounded pieces condition.

\begin{lemma} \label{lem:good gut-offs}
    Suppose that $X$ has $D$--bounded pieces. There exists a $B = B(D)>0$ and $\eta = \eta(D)<0$ so that for each end $e$ there exists a neighborhood basis $\{V_i\}$ for $e$ so that for all $i$ 
    \begin{itemize}
        \item $V_{i+1} \subset V_i$,
        \item each curve in $\partial V_i$ has length bounded above and below by $B$ and $\frac{1}{B}$, respectively, and
        \item the surface cobounded by $\partial V_i$ and $\partial V_{i+1}$ has Euler characteristic bounded below by $\eta$. 
    \end{itemize}
\end{lemma}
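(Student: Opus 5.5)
We build the $V_i$ from envelopes of consecutive pieces in a combinatorial neighborhood $U = Y_1 \cup Y_2 \cup \cdots$ of $e$, enumerated linearly. The difficulty is that the boundary curves $\partial_\pm Y_k$ are geodesics but may be very long, since pieces can have unbounded intrinsic geometry. So we will not cut along them. Instead, we cut along the boundaries of envelopes, whose lengths are controlled by \Cref{lem:geombddsupsurface}.

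\textbf{Step 1: controlled boundary curves.} For each $k$, let $Z_k$ be an envelope of $Y_k \cup Y_{k+1}$ as in \Cref{lem:geombddsupsurface}.
\begin{itemize}
\item Each boundary component $c$ of $Z_k$ is a closed geodesic, so $\ell_X(c) > 1/s$ by \Cref{lem:bddpiecesbddgeometry}.
\item For the upper bound, we use that $Z_k$ has intrinsic $s$-bounded geometry and $|\chi(Z_k)| \le |\kappa|$. Each boundary geodesic has a collar inside $Z_k$. Through each point of $c$ there is an essential loop of length at most $s$ in $Z_k$, and area is bounded by $2\pi|\kappa|$. Together these give a Gauss--Bonnet/collar estimate bounding $\ell(c)$ by a function of $s$ and $\kappa$.
\item Alternatively, $Z_k$ has diameter at most $2D+2s$ and injectivity radius bounded below. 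A boundary geodesic of length $\ell$ has an embedded half-collar of area comparable to $\ell$. The total area is bounded by $2\pi|\kappa|$, which bounds $\ell$.
\end{itemize}
Either way, the curves of $\partial Z_k$ have length in $[1/B, B]$ for some $B = B(D)$.

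\textbf{Step 2: choose the neighborhoods.} We want $V_i$ to be the component of $X \setminus Z_{k_i}$ containing the end $e$, together with its boundary curves from $\partial Z_{k_i}$.
\begin{itemize}
\item The complement of $Z_k$ may have several components, and some of them may be compact. We fold all compact complementary components into the "inner" side, so that $V_i$ is the unique unbounded complementary component adjacent to $e$.
\item Its boundary is then a subset of $\partial Z_{k_i}$, so every curve in $\partial V_i$ has length in $[1/B, B]$.
\item We choose a sparse sequence $k_1 < k_2 < \cdots$ with $k_{i+1} - k_i$ bounded in terms of $D$. The gap is chosen so that $Z_{k_{i+1}}$ lies inside the end component $V_i$ of $X \setminus Z_{k_i}$. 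Such a gap exists because $Z_k$ lies in the $s$-neighborhood of $Y_k \cup Y_{k+1}$, and pieces are $D$-bounded while $X$ has bounded geometry. Only boundedly many pieces meet any $s$-ball, so after boundedly many steps the pieces are disjoint from the $s$-neighborhood of $Z_{k_i}$.
\item With this choice $V_{i+1} \subset V_i$, and the $V_i$ form a neighborhood basis because $Z_{k_i} \supset Y_{k_i}$ escapes to $e$.
\end{itemize}

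\textbf{Step 3: the Euler characteristic bound.} This is the main obstacle. The region $W_i = V_i \setminus \inte V_{i+1}$ is a geodesically bounded subsurface.
\begin{itemize}
\item $W_i$ is contained in the union of $Z_{k_i}$, $Z_{k_{i+1}}$, and the pieces $Y_{k_i}, \ldots, Y_{k_{i+1}+1}$, together with whatever compact complementary components were folded in.
\item The folded-in components are the risky part, since they could a priori be large. However, any compact complementary component of $Z_k$ inside $U$ must be a union of subsurfaces of boundedly many nearby pieces. It lies between $\partial_+ Y_{k_i}$ and pieces far away, and is bounded by curves of $Z_k$.
\item Its area is therefore at most that of the finitely many pieces involved, plus the envelopes. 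That total is $2\pi$ times a bounded number of $|\chi(P)|$ plus $|\kappa|$ terms.
\item Since $W_i$ has geodesic boundary, Gauss--Bonnet gives $|\chi(W_i)| = \mathrm{Area}(W_i)/2\pi$. This yields a bound $\eta(D)$, using that $k_{i+1}-k_i$ is bounded in terms of $D$.
\end{itemize}

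If the folding argument proves delicate, the fallback is to bound the area of $W_i$ directly. It lies within a bounded distance of $Y_{k_i}\cup\cdots\cup Y_{k_{i+1}+1}$, a set of bounded diameter, and area within a bounded-radius ball in a surface of bounded geometry is bounded.
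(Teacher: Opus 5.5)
Your proposal is correct and is essentially the paper's argument. The ingredients match: envelopes $Z_k$ of $Y_k\cup Y_{k+1}$; a subsequence with gaps bounded in terms of $D$, so that each new envelope lies in the end-component $V_i$ of the complement of the previous one; length bounds on $\partial V_i\subset\partial Z_{k_i}$ coming from the envelope's geometry; and the bound on $\chi(W_i)$ from containing $W_i$ in boundedly many pieces and envelopes.

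Two differences are worth noting. You bound the gap by counting pieces near an envelope, whereas the paper counts envelopes meeting an envelope via \Cref{lem:curvecounting}. Your explicit treatment of compact complementary components of $Z_{k_{i+1}}$ also covers a point that the paper's containment $W_i\subset Z_{n_i}\cup Y_{n_i+1}\cup\cdots\cup Y_{n_{i+1}}\cup Z_{n_{i+1}}$ passes over.

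One caution concerns your second justification of the upper length bound, which fails as stated. A long geodesic can have an arbitrarily thin collar even when the systole is bounded below, so "an embedded half-collar of area comparable to $\ell$" is not available. Rely instead on the envelope's intrinsic geometry, as the paper does. For example, if $Z$ has intrinsic diameter at most $d_0$, then the lifts of $\partial Z$ passing within $d_0$ of a point of $\widetilde Z$ all bound the convex set $\widetilde Z$. They therefore cut off pairwise disjoint arcs at infinity, each of visual angle bounded below, which bounds $\ell(c)$ in terms of $d_0$.
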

\begin{proof}
    Let $U = Y_1 \cup Y_2 \cup \ldots$ be a combinatorial neighborhood of $e$, and let $Z_j$ be the envelope of $Y_j$ and $Y_{j+1}$, also assumed to have geodesic boundary.  For each $j$, we let $\alpha_j = Y_j \cap Y_{j+1}$ be the (geodesic) curve where these pieces meet, which is contained in $Z_j$, and cuts off a neighborhood of $e$. Each $Z_j$ has a submulticurve in its boundary $A_j \subset \partial Z_j$ such that $A_j$ cuts off a neighborhood of $e$ and the interior of $Z_j$ is contained in the complement.
    
    By \Cref{lem:curvecounting}, the interior of $Z_j$ can intersect at most $N$ other envelopes.  We will choose a subsequence $\{Z_{n_i}\}$ of $\{Z_j\}$ inductively, and define $V_i$ to be the neighborhood of $e$ cut off by $A_{n_i}$. To define the subsequence, first set $Z_{n_1} = Z_1$.  Having chosen $Z_{n_{i-1}}$, choose $Z_{n_i} \subset V_{i-1}$ with interior disjoint from $Z_{n_{i-1}}$ and with $0 < n_i - n_{i-1} \leq N +2$.  To see that this is possible, we first observe that $Z_{n_{i-1}+1},\ldots,Z_{n_{i-1}+N+2}$ are $N+1$ envelopes, hence one of these must be disjoint from the interior of $Z_{n_{i-1}}$, and we pick one of these to be $Z_{n_i}$. Clearly $0 < n_i - n_{i-1} \leq N+2$. To see that $Z_{n_i} \subset V_{i-1}$, observe that $\alpha_{n_{i-1}} \subset Z_{n_{i-1}}$, while $\alpha_{n_i} \subset Z_{n_i}$.  Since $\alpha_{n_i}$ is contained in the neighborhood of $e$ cut off by $\alpha_{n_{i-1}}$, and $\alpha_{n_i}$ is in the complement of $Z_{n_{i-1}}$, we must have $\alpha_{n_i} \subset V_{i-1}$.  But then, $Z_{n_i} \subset V_{i-1}$, as required.

    By construction we have $V_{i+1} \subset V_i$.  Since $\partial V_i = A_{n_i}$, and $A_{n_i}$ is a union of boundary components of $Z_{n_i}$ which has $s$--bounded geometry, with $s = s(D)$, and $\chi(Z_{n_i}) \geq \kappa = \kappa(D)$, it follows that the length of each component of $A_{n_i}$ is bounded above by $B$ and below $\frac1B$, for some $B = B(D)$.
    
    Finally, observe that the subsurface $W_i$ bounded by $\partial V_i$ and $\partial V_{i+1}$ is contained in the union
    \[ W_i \subset Z_{n_i} \cup Y_{n_i+1} \cup \cdots \cup Y_{n_{i+1}} \cup Z_{n_{i+1}}.\]
    If we let $g$ denote the genus of each piece, so that $\chi(Y_j) = 2-2g-2 = -2g$, for all $j$, then
    \[\chi(W_i) \geq \chi(Z_{n_i}) + \chi(Z_{n_{i+1}}) - |n_{i+1}-n_i|2g \geq 2\kappa - (N+2)2g.  \]
    The right-hand side provides the required lower bound $\eta = \eta(D) < 0$ on $\chi(W_i)$, completing the proof.
\end{proof}

\begin{corollary} \label{cor:bounded exhaustion}
    Suppose that $X$ has $D$--bounded pieces. There exists a $B=B(D) >0$ and $\eta = \eta(D) <0$ and a compact exhaustion $\{C_i\}$ of $X$ so that for all $i$
    \begin{itemize}
        \item $C_1$ separates the ends of $X$,
        \item $\partial C_i$ is contained in the interior of $C_{i+1}$, 
        \item the length of each curve in $\partial C_i$ is bounded above and below by $B$ and $\frac{1}{B}$, respectively, and
        \item the surface cobounded by $\partial C_i$ and $\partial C_{i+1}$ has Euler characteristic bounded below by $\eta$. 
    \end{itemize}
\end{corollary}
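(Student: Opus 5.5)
The plan is to assemble the exhaustion from the end-neighborhood bases given by \Cref{lem:good gut-offs}. Label the ends $\E_1,\ldots,\E_n$ and let $\{V_i^k\}_i$ be the neighborhood basis for $\E_k$ from that lemma, with constants $B_0 = B(D)$ and $\eta_0 = \eta(D)$. First discard finitely many initial terms so that the $V_1^k$ are pairwise disjoint and each $V_1^k$ is contained in a combinatorial neighborhood of $\E_k$. This is possible because the $V_i^k$ form a neighborhood basis and the ends are distinct. Then set
\[ C_i = \overline{X \setminus (V_{i}^1 \cup \cdots \cup V_{i}^n)}. \]

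Next I verify the listed properties. Each $C_i$ is compact, since its complement is a union of neighborhoods of all the ends. The $C_i$ exhaust $X$ because the $V_i^k$ form neighborhood bases: any compact set misses $V_i^k$ for $i$ large. $C_1$ separates the ends because the $V_1^k$ are disjoint and each contains exactly one end. The boundary curves $\partial C_i = \bigcup_k \partial V_i^k$ have lengths in $[1/B_0, B_0]$ directly from the lemma. The region cobounded by $\partial C_i$ and $\partial C_{i+1}$ is the disjoint union over $k$ of the surfaces $W_i^k$ cobounded by $\partial V_i^k$ and $\partial V_{i+1}^k$. Euler characteristic is additive over this disjoint union, so it is at least $n\eta_0$. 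Since $n$ is fixed by the topology of $S$, I take $\eta = n\eta_0$ and $B = B_0$.

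The one point needing care is the requirement that $\partial C_i$ lie in the interior of $C_{i+1}$, i.e.\ that $\partial V_i^k \cap \partial V_{i+1}^k = \emptyset$. The lemma only gives $V_{i+1}\subset V_i$. In its proof, however, $V_{i+1}$ is cut off by $A_{n_{i+1}} \subset \partial Z_{n_{i+1}}$, and the envelopes $Z_{n_i}$ and $Z_{n_{i+1}}$ were chosen with disjoint interiors. So the two boundary multicurves could at worst share components. If that happens, I pass to the subsequence $V_1, V_3, V_5, \ldots$, or more generally to every $(N+2)$-nd term. For that subsequence the boundaries are separated by an entire envelope interior, so they are disjoint.

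The Euler characteristic of the merged regions is then at least a fixed multiple of $\eta_0$, which keeps the bound depending only on $D$. The main obstacle is exactly this bookkeeping: making sure the disjointness obtained after passing to a subsequence does not destroy the uniform Euler characteristic bound. Since the number of merged consecutive regions is bounded by a constant depending only on $D$ via \Cref{lem:curvecounting}, I expect this to go through.
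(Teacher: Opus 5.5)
Your proposal is correct and follows essentially the paper's argument. You apply \Cref{lem:good gut-offs} at each end and take $C_i$ to be the closure of the complement of $\bigcup_e V_i^e$; your choice $\eta = n\eta_0$ is, if anything, more careful than the paper's phrasing.

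The subsequence step is harmless but unnecessary. A common component $c$ of $\partial V_i$ and $\partial V_{i+1}$ cannot occur: $Z_{n_{i+1}}\subset V_i$ lies on the $V_i$-side of $c$, while $V_{i+1}$ lies on the opposite side of $c$ from $Z_{n_{i+1}}$, and this contradicts $V_{i+1}\subset V_i$.
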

\begin{proof}
    We can apply \Cref{lem:good gut-offs} to each end $e$ of $X$ to find neighborhood bases $\{V_i^e\}_{i=1}^{\infty}$ and constants $B>0$ and $\eta < 0$. The construction in the proof ensures that for distinct ends $e,e'$ and any $i \geq 1$, we have $V_i^e \cap V_i^{e'} = \emptyset$, and we set  $C_i = X\setminus \left(\bigcup_{e} V_i^e\right)$. 
\end{proof}

A {\em $D$--bounded pants decomposition} is a pants decomposition where every component has length bounded above by $D$ and below by $\frac1D$.  A surface with bounded geometry and bounded Euler characteristic has a bounded pants decomposition, with bounds depending only on lower bounds on the Euler characteristic and upper bounds on the length of the boundary curves; see \cite{Parlier2023}. Applying this to each of the subsurfaces $C_{i+1} \setminus C_i$ from \Cref{cor:bounded exhaustion} gives the following.

\begin{corollary}\label{cor:bdd pants}
    If $X$ has bounded pieces, then it has a bounded pants decomposition.
\end{corollary}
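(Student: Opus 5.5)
The plan is to assemble a global pants decomposition from pants decompositions of the compact pieces of the exhaustion in \Cref{cor:bounded exhaustion}. Fix $B=B(D)$, $\eta=\eta(D)$ and the exhaustion $\{C_i\}$ given there.

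We would decompose $X$ into the compact subsurfaces $W_0 = C_1$ and $W_i = \overline{C_{i+1}\setminus C_i}$ for $i\ge 1$. Each $W_i$ with $i \geq 1$ has geodesic boundary, since the curves of $\partial C_i$ are geodesics: they are boundary multicurves $A_{n_i}$ of envelopes. By \Cref{cor:bounded exhaustion}, every boundary curve of $W_i$ has length in $[\frac1B, B]$ and $\chi(W_i)\ge \eta$. Moreover, $W_i$ sits isometrically inside $X$, and $X$ has bounded geometry by \Cref{lem:bddpiecesbddgeometry}. Hence every closed geodesic in $W_i$ has length at least the systole bound of $X$.

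The intrinsic upper bound on injectivity radius in $W_i$ is the main point to check. We would use the fact that $W_i$ is contained in a union of boundedly many envelopes and pieces; the proof of \Cref{lem:good gut-offs} gives at most $N+4$ of them. Each of these has bounded extrinsic diameter, so $W_i$ has bounded diameter in terms of $D$. A compact hyperbolic surface with geodesic boundary, bounded topology, boundary lengths in $[\frac1B,B]$ and systole bounded below has bounded intrinsic diameter once the area and the collar structure are controlled. Here the area is $-2\pi\chi(W_i)\le 2\pi|\eta|$ by Gauss--Bonnet, and the short curves are excluded. This is the step we expect to require the most care. If the direct argument becomes awkward, we would instead invoke Bers' constant for surfaces with boundary, in the form used in \cite{Parlier2023}: it gives a pants decomposition whose curves have length bounded by a function of $\chi(W_i)$ and the maximal boundary length, with no diameter bound needed.

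Applying that result to each $W_i$, we obtain pants decompositions $\mathcal P_i$ of $W_i$ whose curves have length at most $D'=D'(\eta,B)$. The lower bound is automatic from the systole of $X$. The core $C_1$ is a single compact surface, so it also admits a pants decomposition, with some fixed bounds. Finally, the union $\bigcup_i \mathcal P_i$, together with the curves $\partial C_i$, is a pants decomposition of $X$. Adjacent $W_i$ share only the boundary curves $\partial C_{i}$, and these are already geodesics of length in $[\frac1B,B]$. Taking the maximum of $D'$, $B$, the inverse systole, and the bound for $C_1$ gives a uniform constant, so the pants decomposition of $X$ is bounded.
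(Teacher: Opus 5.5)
Your proposal is correct and is essentially the paper's argument: apply a Bers-type bound for hyperbolic surfaces with geodesic boundary (as in \cite{Parlier2023}) to each $\overline{C_{i+1}\setminus C_i}$ from \Cref{cor:bounded exhaustion}, using the lower bound $\eta$ on Euler characteristic and the upper bound $B$ on boundary lengths. Then take the union with the curves $\partial C_i$ and a pants decomposition of $C_1$, with the lower length bound supplied by \Cref{lem:bddpiecesbddgeometry}; the intrinsic-diameter detour is unnecessary (the containment in at most $N+4$ envelopes and pieces only bounds the extrinsic diameter), so you are right to fall back on the Bers constant directly.
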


Note that the condition of having a bounded pants decomposition is strictly stronger than having bounded geometry. Examples in the planar surface case are constructed in \cite{Kinjo2011} and in the non-planar case in \cite{BPV2024}. 

\subsection{Propagating towards ends}\label{ssec:propogation}

We will ultimately need to decide when two asymptotically conformal maps agree on the entire neighborhood of an end; or equivalently, to decide when an element restricts to the identity on a neighborhood of an end.  The next lemma utilizes \Cref{lem:curvecounting} to provide a mechanism for doing that, allowing us to simply check that an element is the identity on a piece sufficiently far out in the end. Recall that by \Cref{cor:nearly isometric}, for any $K>1$, for $f \in \Mod_0(X)$ and any end, there is a neighborhood $V$ of the end on which $f$ is $K$--tight.

\begin{lemma} \label{lem:identity propogates} Suppose that $X$ has $D$--bounded pieces.  Let $K > 1$ be such that 
\[ R K^N \leq R+1,\]
where $R$ and $N$ are as in \Cref{lem:curvecounting}.  Suppose $f \in \Mod_0(X)$, $U$ is a combinatorial neighborhood of an end, $s >0$ is as in \Cref{lem:geombddsupsurface}.  If $f$ is $K$--tight on the $(s+R+1)$--neighborhood of $U$ and is the identity on the boundary piece $Y$ of $U$, then (up to isotopy) $f$ restricts to the identity on $U$.
\end{lemma}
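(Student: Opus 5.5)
We argue by induction along the linear enumeration $U = Y_1 \cup Y_2 \cup \cdots$, with $Y = Y_1$. The inductive claim is: if $f$ is (isotopic to) the identity on $Y_k$, then $f$ is isotopic to the identity on $Y_k \cup Y_{k+1}$, by an isotopy supported in the interior of $Y_{k+1}$ together with a neighborhood of $\partial_-Y_k$. Since the pieces exhaust $U$ and each isotopy is supported away from the pieces already handled, we can compose these isotopies; the composition converges on compacta. This gives the identity on all of $U$.

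For the inductive step, fix $k$ and let $Z$ be an envelope of $Y_k \cup Y_{k+1}$ from \Cref{lem:geombddsupsurface}. Then $Z$ lies in the $s$--neighborhood of $U$. By \Cref{lem:curvecounting} there is a filling closed geodesic $\gamma \subset Z$ with $\ell_X(\gamma) \le R$, and any homeomorphism of $Z$ preserving $\gamma$ is isotopic to the identity.

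We first show that $f(\gamma) = \gamma$ up to homotopy, using the tightness hypothesis in a pigeonhole argument. Consider the orbit $\gamma, f(\gamma), f^2(\gamma), \dots$; more simply, consider $f(\gamma)$ directly. Since $f$ is $K$--tight on the $(s+R+1)$--neighborhood of $U$, we get $\ell_X(f(\gamma)) \le KR \le R+1$.

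To place $f(\gamma)$, note that $\gamma$ meets $Y_k$, because it fills $Z \supset Y_k$. Since $f$ is the identity on $Y_k$, the curve $f(\gamma)$ also meets $Y_k \subset Z$. So $f(\gamma)$ is one of the at most $N$ curves of length at most $R+1$ meeting $Z$. Being short and meeting $Z$, it stays within distance about $R+1$ of $Z$, and hence inside the region where $f$ is $K$--tight.

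Iterating, each $f^j(\gamma)$ with $j \le N$ has length at most $RK^j \le RK^N \le R+1$ and meets $Y_k$. So all of $\gamma, f(\gamma), \dots, f^N(\gamma)$ lie in a set of at most $N$ curves. By pigeonhole, $f^a(\gamma) = f^b(\gamma)$ for some $a<b\le N$, so $f^{b-a}(\gamma)=\gamma$.

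This periodicity alone does not give $f(\gamma)=\gamma$. To upgrade it, we use that $f$ is the identity on $Y_k$, so $f(\gamma)$ and $\gamma$ have identical arcs inside $Y_k$. Running the same count for all curves of length at most $R+1$ through $Y_k$, $f$ acts as a permutation $\sigma$ on this finite set, and this permutation must respect the intersection patterns of the curves with $Y_k$. I expect this to be the main obstacle: getting from ``$f$ permutes a finite set of short curves'' to ``$f$ fixes $\gamma$''. It needs an extra observation beyond \Cref{lem:curvecounting}.

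The first attempt at this step is the following. Choose $\gamma$ so that it is determined among curves of length at most $R+1$ by its intersection with $Y_k$ together with its homotopy class relative to $\partial Y_k$. Since $f|_{Y_k} = \id$, the curve $f(\gamma)$ enters $Y_k$ along the same arcs, and its complementary arcs outside $Y_k$ are $f$-images of arcs of bounded length in $Y_{k+1}$ and its neighbors. If this determination is unavailable, fall back on applying the pigeonhole argument to the group generated by $f$ acting on this finite configuration. That argument would show that $f$ fixes every short curve in $Z$ after replacing $f$ by a bounded power, and we would then need to remove the power using $f|_{Y_k}=\id$.

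Once $f(\gamma)=\gamma$, the homeomorphism $f$ maps $Z$ to a subsurface isotopic to $Z$, since $Z$ is the subsurface filled by $\gamma$. After an isotopy, $f$ preserves $Z$ and $\gamma$, so by \Cref{lem:curvecounting} the restriction $f|_Z$ is isotopic to the identity. The isotopy can be taken relative to $Y_k$, because it is the identity there already: the mapping class of $Z$ rel $Y_k$ is trivial up to twists in $\partial Y_k$, and these twists are ruled out because $\gamma$ crosses $\partial Y_k$. Hence $f$ is the identity on $Y_{k+1}$, which completes the induction.
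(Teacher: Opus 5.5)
Your setup matches the paper's: the envelope $Z$, the filling curve $\gamma$ of length at most $R$, the induction $\ell(f^j(\gamma))\le RK^j\le R+1$ for $j\le N$, and the pigeonhole step giving $f^p(\gamma)=\gamma$ for some $1\le p\le N$. The proof then stalls at exactly the step you flag, and neither of your proposed fixes closes it.

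\emph{Why your attempts do not work.} The first attempt cannot work. The arcs of $\gamma$ in $Y_k$ do not determine $\gamma$ among short curves: any mapping class supported in $Y_{k+1}$ fixes those arcs while moving $\gamma$, and only the quantitative tightness rules such maps out. Your claim that $f$ permutes the set of all curves of length at most $R+1$ meeting $Y_k$ is also unjustified. A curve of length close to $R+1$ can be stretched by a factor up to $K$ and leave that set. Only the orbit of $\gamma$, which starts at length at most $R$, is controlled for $N$ steps. Finally, you never actually need $f(\gamma)=\gamma$ directly.

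\emph{The missing idea: finite-order rigidity on an invariant subsurface.} Let $\Omega_0=\{\gamma,f(\gamma),\dots,f^{p-1}(\gamma)\}$. Since $f^p(\gamma)=\gamma$, this finite set is $f$--invariant. Hence the subsurface $W$ it fills is $f$--invariant up to isotopy, and $W$ contains $Z\supset Y_k\cup Y_{k+1}$. Because $f^p$ fixes every curve of a filling collection on $W$, the restriction $f|_W$ has finite order in the mapping class group of $W$.

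A finite-order mapping class that is the identity on the essential, non-annular subsurface $Y_k$ must be trivial. For example, realize it by an isometry of some hyperbolic structure on $W$ (Nielsen realization). This isometry preserves every closed geodesic in $Y_k$, so it fixes their intersection points. It is therefore the identity on an open set, and hence everywhere. Thus $f|_W$, and in particular $f|_{Y_k\cup Y_{k+1}}$, is isotopic to the identity, and your induction over the pieces then goes through.

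This is essentially your ``fallback'' with the removal of the power made explicit, and it is how the paper argues. Note that it must be run on the $f$--invariant surface $W$, not on $Z$, which need not be $f$--invariant.
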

\begin{proof}   Suppose $U = Y_1\cup Y_2 \cup \ldots$ is a combinatorial neighborhood of an end with boundary piece $Y = Y_1$. Let $f \in \Mod_0(X)$ be $K$--tight on the $s+R+1$--neighborhood of $U$ and the identity on $Y$.  Let $Z$ be an envelope of $Y \cup Y'$ from \Cref{lem:geombddsupsurface}, where $Y'= Y_2$ is the piece adjacent to $Y$ in $U$.

Let $\gamma$ be a closed geodesic in $Z$ of length at most $R$ that fills $Z$, and $\Omega$ the set of curves in $X$ of length at most $R+1$ that meet $Y$.  By \Cref{lem:curvecounting}, since $Y \subset Z$ we know $|\Omega| \leq N$.
\begin{claim}
    For all $k \leq N$,  $f^k(\gamma) \in \Omega$. 
\end{claim}
\begin{proof}[Proof of claim] Since $f$ restricts to the identity on $Y$, it follows that $f^k(\gamma)$ intersects $Y$ for all $k$.  We first prove by induction that for all $1 \leq k \leq N$, we have
\[ \ell(f^k(\gamma)) \leq K^k \ell(\gamma). \]
For this, we first note that $\gamma$ is contained in $Z$, and hence in the $s$--neighborhood of $Y \cup Y'$, and thus also in the $s$--neighborhood of $U$. Therefore
\[ \ell(f(\gamma) \leq K \ell(\gamma),\]
since $f$ is $K$--tight on the $s$--neighborhood of $U$, thus proving the statement for $k=1$.  Now assume
\[ \ell(f^k(\gamma)) \leq K^k \ell(\gamma)\]
for some $1 \leq k < N$.  Our assumption on $K$ and the fact that $\ell(\gamma) \leq R$ implies
\[ \ell(f^k(\gamma)) \leq K^k R < K^NR \leq R+1.\]
Since $f^k(\gamma)$ intersects $Y$, we see that $f^k(\gamma)$ is contained in the $R+1$--neighborhood of $Y$, and hence  also contained in the $s+R+1$--neighborhood of $U$.  Therefore
\[ \ell(f^{k+1}(\gamma)) \leq K \ell(f^k(\gamma)) \leq K^{k+1}\ell(\gamma), \]
completing the induction.  Our choice of $K$ implies
\[ \ell(f^k(\gamma)) \leq K^k \ell(\gamma) \leq K^NR \leq R+1.\] 
Therefore, $f^k(\gamma) \in \Omega$, for all $1 \leq k \leq N$, proving the claim.
\end{proof}

Now observe that
\[ \gamma, f(\gamma),f^2(\gamma),\ldots,f^N(\gamma) \in \Omega\]
and so by the pigeonhole principle, this list must contain two curves which are the same, say $f^m(\gamma) = f^{m+k}(\gamma)$ for some $0 \leq m < m+k \leq N$.  Hence $f^k(\gamma) = \gamma$ and $k \leq N$.
Thus,
\[ \Omega_0 = \{\gamma,f(\gamma),\ldots, f^k(\gamma) \} \subset \Omega\] is a finite, $f$--invariant set of curves.  Since $f^k(\gamma) = \gamma$, it follows that $f^k(\delta) = \delta$ for all $ \delta \in \Omega_0$.
Let $W$ be the subsurface filled by the elements of $\Omega_0$.  We observe that $W$ contains $Z$, hence $Y$, and is $f$--invariant since $\Omega_0$ is.

The restriction of $f$ to $W$ thus has finite order, up to isotopy.  Since $f$ is the identity on the subsurface $Y$, it is necessarily isotopic to the identity on all of $W$.  In particular, $f$ is isotopic to the identity on $Y \cup Y'$.

Now we may repeat the above argument, replacing $U$ with the combinatorial neighborhood $U' = Y_2 \cup Y_3 \cup \ldots$ having $Y' = Y_2$ as boundary piece, an deduce that $f$ is also the identity on $Y' \cup Y''$, where $Y''=Y_3$, and we deduce that $f$ is isotopic to the identity on $Y_1 \cup Y_2 \cup Y_3$.  Continuing inductively, we see that $f$ is in fact the isotopic to the identity on the entire original combinatorial neighborhood $U$, proving the lemma.
\end{proof}

\subsection{Finite families of maps}\label{ssec:compactness}

In this final subsection we combine the above in order to gain control over sufficiently large finite families of asymptotically conformal maps with flux zero. Note that by \Cref{cor:nearly isometric}, given a family of asymptotically conformal maps one can obtain control over lengths of curves living in small enough end-neighborhoods. However, if we add the condition that all of our maps have zero flux we can obtain additional control on where the images of curves lie. 

\begin{lemma} \label{lem:controlled drift}
    Suppose $X$ has $D$--bounded pieces and for $j=1,\ldots,M$ let $f_j \in \Mod_0(S)$ with flux zero, i.e.~$\Phi(f_j) = 0$. For any $K>1$, $r >0$, and any end $e$ of $S$, there exists a pair of combinatorial neighborhoods $U \subset V$ of $e$ such that $N_r(U) \subset V$, $f_j$ is $K$-tight on $V$ for each $j$, and if $\gamma$ is a closed geodesic in $N_r(U)$, then (the geodesic representative of) $f_j(\gamma)$ lies in $V$ for all $j$.

\end{lemma}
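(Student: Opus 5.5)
The plan is to first choose $V$ so that every $f_j$ is $K$--tight on it, then push $U$ far enough out that $N_r(U)\subset V$, and finally use flux zero to keep the images $f_j(\gamma)$ of curves in $N_r(U)$ inside $V$. The key point is that a zero-flux homeomorphism moves a neighborhood of the end only a bounded distance in the combinatorial sense. The bounded-pieces hypothesis turns that combinatorial bound into a metric one.

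\emph{Step 1: choosing $V$.} By \Cref{cor:nearly isometric}, applied to each of the finitely many $f_j$, there is a compact $C\subset X$ such that every $f_j$ is $K$--tight on $X\setminus C$. Choose a combinatorial neighborhood $V=Y_1\cup Y_2\cup\cdots$ of $e$ that is disjoint from $C$. Then each $f_j$ is $K$--tight on $V$ (closed geodesics contained in $V$ avoid $C$).

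\emph{Step 2: a uniform displacement bound from flux zero.} For each $j$, choose a representative of $f_j$ and a compact suited subsurface $Z_j$ such that $f_j$ maps a neighborhood of $e$ into a neighborhood of $e$. Flux zero means $f_j$ shifts no genus out the end $e$. Hence for some $m_j$, cutting along the curve $\alpha_m=Y_m\cap Y_{m+1}$ with $m\ge m_j$, the image $f_j(\alpha_m)$ is homotopic to a curve lying in a bounded window $Y_{m-c_j}\cup\cdots\cup Y_{m+c_j}$. The reason is that the genus on each side of $f_j(\alpha_m)$ equals the genus on the corresponding side of $\alpha_m$, and each compact region contains only finitely many ways for a separating curve to realize a given genus split. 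I expect this to be the main obstacle: one must make rigorous that a zero-flux class does not drift, without rigidity of $f_j$.

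The cleanest route I would try is the following. Take $W$ to be a compact suited surface containing $Z_j\cup f_j(Z_j)\cup f_j^{-1}(Z_j)$. Since $\Phi(f_j)=0$, the complement $f_j(X\setminus W)$ differs from $X\setminus W$ only inside a compact set, which gives a uniform $c=\max_j c_j$. (Concretely, $f_j(\alpha_m)$ and $\alpha_m$ cobound a compact region of genus zero net difference, and its extent is controlled by $f_j(Y_{m_j})$, a fixed compact set.) Converting to geometry, $D$--bounded pieces imply that each piece in the window lies within distance $(2c+1)D$ of $\alpha_m$. Therefore every point of $f_j(Y_m)$ lies within a uniform distance $T$ of $Y_m$, for all $m\ge m_0$ and all $j$.

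\emph{Step 3: choosing $U$ and concluding.} Take $U=Y_n\cup Y_{n+1}\cup\cdots$ with $n$ large, so that $N_{r+T+L}(U)\subset V$ and $n\ge m_0$. Here $L$ is a margin discussed below. Let $\gamma\subset N_r(U)$ be a closed geodesic. Then $\gamma$ is contained in the union of the pieces $Y_m$ with $m\ge n-k$, where $k$ comes from $r/D$-type bounds, using the uniform collars given by \Cref{lem:bddpiecesbddgeometry}.

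The image $f_j(\gamma)$ lies in $\bigcup_{m\ge n-k} f_j(Y_m)$, hence in the $T$--neighborhood of $\bigcup_{m\ge n-k}Y_m$. Its geodesic representative is obtained by straightening. Straightening stays within the convex hull region: if the multicurve $\partial(f_j(\bigcup_{m\ge n-k}Y_m))$ is homotopic into a fixed geodesic multicurve near the end, then the geodesic representative of $f_j(\gamma)$ lies in the corresponding subsurface with geodesic boundary. That subsurface is within a further bounded distance $L$, using bounded geometry. Taking $n$ large then forces the geodesic of $f_j(\gamma)$ into $V$ for all $j$.
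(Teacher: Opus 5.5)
There is a genuine gap in Step 2, at the point you flagged, and the proposed resolution does not work. Flux zero by itself gives no displacement bound that is uniform in $m$. For example, take disjoint blocks $W_i = Y_{a_i}\cup\cdots\cup Y_{b_i}$ with $b_i-a_i\to\infty$. Let $h_i$ be supported in $W_i$ and send a middle curve $\alpha_{m_i}$ to a separating curve with the same genus split that meets every piece of $W_i$. The locally finite product $\prod_i h_i$ has flux zero, yet the image of $\alpha_{m_i}$ is not homotopic into any window $Y_{m_i-c}\cup\cdots\cup Y_{m_i+c}$ with $c$ independent of $i$.

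Your argument cannot rule this out. That $f_j(X\setminus W)$ and $X\setminus W$ differ only inside a compact set holds for every homeomorphism, so it says nothing as $m\to\infty$. Equality of genus on the core side of $f_j(\alpha_m)$ and of $\alpha_m$ does not localize $f_j(\alpha_m)$ either. Step 2 never uses $f_j\in\Mod_0(X)$, and that hypothesis is exactly what must supply the uniformity. The pointwise claim about $f_j(Y_m)$ also depends on a representative you never specify: an arbitrary representative can displace points unboundedly even for the trivial class.

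Step 3 inherits the same problem. The constant $L$, bounding how far the geodesic representative of $f_j(\gamma)$ lies from $f_j(\gamma)$, is not available for an arbitrary representative. Bounded geometry alone does not provide it, since boundary curves of bounded pieces can have unbounded length.

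The paper fills this gap with the analytic input you are missing. By \Cref{thm:EMS}, each $f_j$ has a representative that is an asymptotic isometry, so after shrinking $V$ all $f_j$ are $K$--biLipschitz on $V$. Then $\operatorname{diam} f_j(Y)\le KD$ for every piece $Y\subset U$. Flux zero is used only to say that $f_j(Y)$ meets $Y$, which gives $d(f_j(x),U)\le K(r+D)$ for $x\in N_r(U)$. Straightening costs at most $\epsilon$ by stability of the quasi-geodesic $f_j(\gamma)$. One then shrinks $K$ so that $K(r+D)+\epsilon<r+D+1$ and takes $N_{r+D+1}(U)\subset V$.

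Alternatively, you can avoid displacement bounds altogether. Let $\beta_j$ be the geodesic representative of $f_j^{-1}(\partial V)$, and choose $U$ so deep that $N_r(U)$ lies in the $e$--side of every $\beta_j$. Since $f_j$ fixes $e$, that $e$--side is isotopic to $f_j^{-1}(V)$. So any closed geodesic $\gamma\subset N_r(U)$ is homotopic into $f_j^{-1}(V)$, and $f_j(\gamma)$ is homotopic into $V$. Because $\partial V$ is geodesic, the geodesic representative of $f_j(\gamma)$ then lies in $V$. This route uses only that the finitely many $f_j$ are pure.
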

\begin{proof}
By shrinking $K > 1$ in statement we may also choose $\epsilon > 0$ such that
    \[ K(r+D) + \epsilon < r+D+1,\]
and so that a $K$--local $(K,0)$--quasi-geodesic curve is within $\epsilon$ of the geodesic it is homotopic to.

By \Cref{thm:EMS} we can choose asymptotically isometric representatives for each of the $f_j$. After taking intersections of neighborhoods, there is a combinatorial neighborhood $V$ of the end $e$ so that for all $j$, $f_j$ is $K$--biLipschitz on $V$. Applying \Cref{cor:nearly isometric} and potentially shrinking $V$ we can also assume that each $f_j$ is $K$--tight on $V$. 

Now let $V$ be such a neighborhood, and $U$ a combinatorial neighborhood with $N_{r+D+1}(U) \subset V$.
Because $f_1,\ldots,f_M$ are $K$--biLipschitz on $V$, hence  on $U$, we get that for any piece $Y$ in $U$, the diameter of $f_i(Y)$ is at most $KD$, for all $j$. 
Fix a $j=1,\ldots,M$.
Since $f_j$ has flux zero for any piece $Y$, we have $f_j(Y)\cap Y \neq \emptyset$.  Consequently, for any $x \in U$, if $Y$ is a piece in $U$ that contains $x$, we see that
\[ d(f_j(x),U) \leq d(f_j(x),Y) \leq \mbox{diam}(f_j(Y)) \leq KD.\]
Any point $x \in N_r(U)$ which is not in $U$ is within distance $r$ from a point $y \in U$, and thus
\[ d(f_j(x),U) \leq d(f_j(x),f_j(y)) + d(f_j(y),U) \leq Kr + KD. \]
Now if $\gamma$ is a closed geodesic in $N_r(U)$, then the inequality above implies
\[ f_j(\gamma) \subset N_{Kr+KD}(U).\]  
The geodesic representative of $f_i(\gamma)$ is within $\epsilon$ of $f_j(\gamma)$, and so from our choice of $K$ and $\epsilon$ we see that the geodesic representative of $f_j(\gamma)$, is contained in $N_{K(r+D)+\epsilon}(U) \subset N_{r+D+1}(U) \subset V$
\end{proof}

Finally we combine this with \Cref{lem:curvecounting} and \Cref{lem:identity propogates} in order to see that when the finite collection of mapping classes is large enough, there must be two that agree on small enough neighborhoods of the ends of $X$. 

\begin{lemma}\label{lem:creatingcompactsupport}
    Suppose that $X$ has $D$--bounded pieces and let $N>0$ be as in \Cref{lem:curvecounting}. If $\{f_j\}_{j=1}^{M}$ is a family of flux zero maps in $\Mod_0(X)$ with $M \geq N^n + 1$, then there exists a compact subsurface $\Sigma \subset X$ so that there exists some $j\neq j'$ with $f_{j'}^{-1}f_{j}$ supported on $\Sigma$.
\end{lemma}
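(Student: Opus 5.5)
The plan is to apply the pigeonhole principle end by end, using the filling curve from \Cref{lem:curvecounting}, and then finish with \Cref{lem:identity propogates}.

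\textbf{Setup.} Fix $K>1$ with $RK^N \le R+1$, as in \Cref{lem:identity propogates}. Fix the ends $e_1,\ldots,e_n$. For each end $e_i$, apply \Cref{lem:controlled drift} to the family $f_1,\ldots,f_M$ and to this $K$, with $r$ large, say $r = 2(s+R+1)+2D$. This gives combinatorial neighborhoods $U_i \subset V_i$ of $e_i$ with $N_r(U_i)\subset V_i$. Every $f_j$ is $K$--tight on $V_i$. Moreover, for any closed geodesic $\gamma$ in $N_r(U_i)$, the geodesic representative of $f_j(\gamma)$ lies in $V_i$. Shrinking each $U_i$ if necessary, we may assume its boundary piece $Y^i$ sits deep enough that an envelope $Z^i$ of $Y^i$ together with the $(s+R+1)$--neighborhood of $U_i$ lies in $N_r(U_i)$.

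\textbf{Pigeonhole step.} Let $\gamma^i\subset Z^i$ be the filling geodesic of length at most $R$ from \Cref{lem:curvecounting}. For each $j$, $f_j$ is $K$--tight on $V_i$, so $\ell(f_j(\gamma^i))\le KR\le R+1$. Since $f_j$ has flux zero, $f_j(Y^i)\cap Y^i\neq\emptyset$. Combined with the bounded displacement estimate in the proof of \Cref{lem:controlled drift}, this shows $f_j(\gamma^i)$ stays within bounded distance of $Z^i$.

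The point needing care is to make $f_j(\gamma^i)$ land in a set of at most $N$ curves. I would handle this by counting curves of length $\le R+1$ meeting $Z^i$, or a slightly enlarged envelope, enlarging $N$ via \Cref{lem:curvecounting} if needed. The paper's $N$ is stated for curves meeting an envelope, which matches this. Ensuring that $f_j(\gamma^i)$ actually meets $Z^i$ is the delicate step. First try: argue that if $f_j(\gamma^i)$ missed $Z^i$, then the filling property together with $f_j(Y^i)\cap Y^i\ne\emptyset$ fails. Fallback: pick a slightly larger envelope around $Y^i\cup f_j(Y^i)$ using bounded diameter, with counting constants still depending only on $D$.

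With this in hand, the map $j\mapsto (f_j(\gamma^1),\ldots,f_j(\gamma^n))$ takes at most $N^n$ values. Since $M\ge N^n+1$, there exist $j\ne j'$ with $f_j(\gamma^i)=f_{j'}(\gamma^i)$ for every $i$.

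\textbf{Conclusion.} Set $g=f_{j'}^{-1}f_j\in\Mod_0(X)$, which is asymptotically conformal by \Cref{L:ACgroup}. Then $g(\gamma^i)=\gamma^i$. Since $\gamma^i$ fills $Z^i$ and any homeomorphism of $Z^i$ preserving $\gamma^i$ is isotopic to the identity (\Cref{lem:curvecounting}), $g$ is isotopic to the identity on $Z^i\supset Y^i$.

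To apply \Cref{lem:identity propogates} we need $g$ to be $K$--tight on the $(s+R+1)$--neighborhood of $U_i$. Here $f_j$ is $K$--tight there, and $f_{j'}^{-1}$ is $K$--tight on the image by the containment of geodesic representatives in $V_i$. So $g$ is $K^2$--tight. To absorb this, I would choose $K$ at the start with $RK^{2N}\le R+1$ and run \Cref{lem:controlled drift} with $K$ replaced by $\sqrt K$.

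\Cref{lem:identity propogates} then shows $g$ is the identity on $U_i$ for each $i$. Taking $\Sigma=\overline{X\setminus\bigcup_i U_i}$, which is compact, $g$ is supported on $\Sigma$.

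The main obstacle I expect is the bookkeeping of the pigeonhole step, namely getting all images $f_j(\gamma^i)$ into a single set of size at most $N$, together with the tightness of the composition $g$.
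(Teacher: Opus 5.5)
Your proposal is correct and follows essentially the paper's proof: choose $K$ with $RK^{2N}\le R+1$, apply \Cref{lem:controlled drift} at each end, pigeonhole the tuples $(f_j(\gamma^1),\ldots,f_j(\gamma^n))$ in $\Omega_1\times\cdots\times\Omega_n$ (with $\Omega_i$ the curves of length at most $R+1$ meeting the envelope $Z^i$), and finish with \Cref{lem:identity propogates} applied to the $K^2$--tight map $f_{j'}^{-1}f_j$; your additional $\sqrt K$ adjustment is redundant but harmless. Your ``first try'' for why $f_j(\gamma^i)$ meets $Z^i$ is the right justification for the step the paper states with a bare ``Consequently'': a curve filling $f_j(Z^i)\supset f_j(Y^i)$ must meet any geodesic-boundary subsurface that essentially overlaps $f_j(Y^i)$, and flux zero gives such an overlap with $Y^i\subset Z^i$. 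So the fallback of enlarging $N$ is not needed.
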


\begin{proof} 
Let $M \geq N^n+1$ and $\cF = \{f_j\}_{j=1}^{M}$ be a family of flux zero maps in $\Mod_0(X)$.

Let $K > 1$ be such that
\[ R K^{2N} \leq R+1,\]
where $R,N$ are as in \Cref{lem:curvecounting}. Enumerate the $n$ ends, and for each one we apply \Cref{lem:controlled drift} to the family $\cF$ with the chosen $K$ and with $r=s+R+1$ to find a pair of combinatorial neighborhoods $U_k \subset V_k$ of the $k^{th}$ end, for each $k=1,\ldots,n$, satisfying the conclusion of the lemma.  Now set
\[ \Sigma = \overline{X \setminus \bigcup_{k=1}^n U_k}.\]

For each $U_k$, let $Y_k$ be its boundary piece and $Y_k'$ the adjacent piece in $U_k$.  Let $Z_k$ be an envelope of $Y_k \cup Y_k'$.  Choose a curve $\gamma_k$ of length at most $R$ in $Z_k$ that fills $Z_k$, as in \Cref{lem:curvecounting}, and let $\Omega_k$ be the set of curves of length at most $R+1$ that intersect $Z_k$.  By the same lemma, $|\Omega_k| \leq N$.

For any $k$ and $j$, since $\gamma_k \in U_k \subset V_k$, we have
\[ \ell(f_j(\gamma_k)) \leq K \ell(\gamma_k) \leq KR < R+1,\]
by our choice of $K$, and since $f_j$ is $K$--tight on $V_k$.  Additionally, since each $f_j$ has flux zero, we get that $f_j(Y_k) \cap Y_k \neq 0$, for all $j$ and $k$.  Consequently, $f_j(\gamma_k) \in \Omega_k$, for all $j$ and $k$.
By the pigeonhole principle, there exists $j' \neq j$ so that
\[(f_{j'}(\gamma_1),\ldots,f_{j'}(\gamma_n)) = (f_j(\gamma_1),\ldots,f_j(\gamma_n)) \in \Omega_1 \times \ldots \times \Omega_n,\]
since $|\Omega_1 \times \ldots \times \Omega_n| \leq N^n < N^n+1 \leq M$.
Therefore, $f_{j'}^{-1}f_j(\gamma_k) = \gamma_k$ for all $k$. But \Cref{lem:curvecounting} implies $f_{j'}^{-1}f_j$ is (isotopic to) the identity on $Y_k$ for each $k$.

Next by \Cref{lem:controlled drift}, if $\alpha$ is a closed geodesic in $N_{s+R+1}(U_k)$ for any $k$, then $f_j(\alpha) \subset  V_k$ for all $j$, and hence
\[ \ell(f_{j'}^{-1}(f_j(\alpha))) \stackrel{K}{\asymp} \ell(f_j(\alpha)) \stackrel{K}{\asymp} \ell(\alpha).\]

Thus, $f_{j'}^{-1}f_j$ is $K^2$--tight on the $s+R+1$--neighborhood of $U_k$ for each $k$ and is the identity on each $Y_k$.  Since
\[ R(K^2)^N = RK^{2N} \leq R+1,\]
\Cref{lem:identity propogates} implies $f_{j'}^{-1}f_j$ is the identity on $U_k$ (up to isotopy) for all $k$.  Thus, $f_{j'}$ and $f_j$ are supported in $\Sigma$, as required.
\end{proof}

\section{Bounded pieces and surface Houghton groups}\label{sec:bpandsho}

In this section we prove \Cref{thm:mainbddpieces}; that is, if $X$ is a hyperbolic Riemann surface structure on $S$ that has bounded pieces for the rigid structure $\cR$, then
\[ [\Mod_0(X):\SHo(S,\cR)] < \infty,\]
provided $\SHo(S,\cR) < \Mod_0(X)$.

We fix the rigid structure $\cR$ on $S$ throughout the rest of this section and as usual write $\SHo(S,\cR) = \SHo(S)$.
\begin{proof}[Proof of \Cref{thm:mainbddpieces}]
     Since $S$ has finitely many ends, the pure subgroup $\PSHo(S)<\SHo(S)$ has finite index in $\SHo(S)$, so it suffices to prove that $\PSHo(S)$ has finite index in $\PMod_0(X)$.  Since the image of the flux homomorphism, $\Phi(\PSHo(S))$, is contained in $(g \mathbb Z)^n$, where $g$ is the genus of every piece of $\cR$, and since
    \[ \PMod_0^g(X) := \Phi^{-1}((g \mathbb Z)^n) \cap \PMod_0(X) < \PMod_0(X)\]
    is a finite index subgroup, it in fact suffices to prove that $\PSHo(S)$ has finite index in $\PMod_0^g(X)$.

\begin{claim}
    $[\PMod_0^g(X): \PSHo(S)] <M$, where $M = N^n + 1$ and $N$ is as in \Cref{lem:curvecounting}.
\end{claim}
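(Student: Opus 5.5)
The plan is to argue by contradiction using \Cref{lem:creatingcompactsupport}. Suppose there are $M = N^n+1$ elements $h_1,\ldots,h_M \in \PMod_0^g(X)$ lying in pairwise distinct cosets $h_j\PSHo(S)$. Since each $\Phi(h_j) \in (g\mathbb Z)^n$, and $\Phi(\PSHo(S)) = (g\mathbb Z)^n$, we can find $F_j \in \PSHo(S)$ with $\Phi(F_j) = \Phi(h_j)$. For example, $F_j$ can be a product of powers of the rigid shift maps $F$ out each end described in \Cref{ssec:shogrps}, corrected by compactly supported classes so that $F_j$ is pure. We then set $f_j = h_j F_j^{-1}$.

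By hypothesis $\SHo(S) < \Mod_0(X)$, so $F_j \in \PMod_0(X)$ and hence $f_j \in \PMod_0(X)$ by \Cref{L:ACgroup}. Moreover $\Phi(f_j) = 0$ because $\Phi$ is a homomorphism. Since $F_j \in \PSHo(S)$, the elements $f_j$ still lie in pairwise distinct left cosets of $\PSHo(S)$: if $f_j\PSHo(S) = f_{j'}\PSHo(S)$, then $h_j\PSHo(S) = h_{j'}\PSHo(S)$. So $\{f_j\}_{j=1}^M$ is a family of flux-zero elements of $\Mod_0(X)$ with $M \ge N^n+1$.

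Next, \Cref{lem:creatingcompactsupport} gives $j \ne j'$ and a compact subsurface $\Sigma$ such that $f_{j'}^{-1}f_j$ is supported on $\Sigma$. A compactly supported mapping class is asymptotically rigid: take a suited subsurface containing $\Sigma$ as defining surface, on whose complement the map is the identity, which is rigid. It is also pure. Hence $f_{j'}^{-1}f_j \in \PSHo(S)$, so $f_j$ and $f_{j'}$ lie in the same coset, a contradiction. This shows the index is at most $N^n$, so it is $< M$.

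The main point needing care is the reduction to flux zero. We must confirm that $\Phi(\PSHo(S))$ is all of $(g\mathbb Z)^n$. Note that $\Phi$ lands in the sum-zero sublattice when $n\ge 2$, since total genus shift is conserved, so "all of" should be read inside the image $\Phi(\PMod^g_0(X))$. For any flux vector in $(g\mathbb Z)^n$ realized by some $h$, we therefore need an element of $\PSHo(S)$ with the same flux. This follows by composing "handle shifts" between pairs of ends, built from rigid shifts along a combinatorial line through the core. Such elements lie in $\PSHo(S)$ by the generating-set description from \cite{AraBuxKimLei}, and are asymptotically conformal by the hypothesis $\SHo(S)<\Mod_0(X)$. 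Everything else is routine coset bookkeeping.
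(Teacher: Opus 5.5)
Your proposal is correct and follows the paper's proof. You normalize the $M$ elements to flux zero by composing with elements of $\PSHo(S)$, using $\SHo(S)<\Mod_0(X)$ to stay inside $\Mod_0(X)$. You then apply \Cref{lem:creatingcompactsupport} to get a compactly supported $f_{j'}^{-1}f_j \in \PSHo(S)$, so two of the elements share a coset. You also observe that the flux reduction needs $\Phi(\PMod_0^g(X))\subseteq\Phi(\PSHo(S))$, which holds because both lie in the sum-zero part of $(g\mathbb Z)^n$ and handle shifts between pairs of ends realize all of it; this is a correct clarification of a step the paper leaves implicit.
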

\begin{proof}[Proof of claim]
    Suppose $f_1,f_2,f_3,\ldots,f_M \in \PMod_0^g(X)$ and $M = N^n+1$.  By composing with elements of $\PSHo(S)$, we can assume that all $f_i$ have flux zero, that is, $\Phi(f_i) = 0$.  We must show that for some $i \neq j$, $f_i$ and $f_j$ are in the same coset of $\PSHo(S)$.

    We can now immediately apply \Cref{lem:creatingcompactsupport} to the family $\{f_j\}_{j=1}^{M}$. This gives us $j \neq j'$ so that $f_{j'}^{-1}f_{j}$ are compactly supported. Thus they are in the same coset of $\PSHo(S)$, as required. 
\end{proof}
This completes the proof of the theorem.
\end{proof}

\subsection{End-Periodic Maps}\label{ssec:endperiodic}

The proof of \Cref{thm:mainbddpieces} is easily modified to deduce a similar conclusion under the weaker assumption that there is a finite index subgroup of $\SHo(S)$ contained in $\Mod_0(X)$.  In fact, we can assume even less, as we now explain.

In this subsection we prove \Cref{thm:mainep}, which asserts that the presence of an element of $\mathcal H(S)$ that is both end-periodic and asymptotically conformal is enough to guarantee that $\mathcal H(S)$ has finite index in $\Mod(X)$. 
We refer the reader to \cite{FenleyCT,Fenley-depth-one,CC-book} for more background on end-periodic homeomorphisms. 

\begin{definition}
    An \emph{end-periodic homeomorphism} of $S$ is a map satisfying the following. There exists an $m \geq 1$ so that for each end $e$ of $S$, there exists a neighborhood $U_e$ of $e$ in $S$ so that 
    \begin{enumerate}
        \item $f^m(U_e) \subsetneq U_e$ and $\{f^{nm}(U_e)\}_{n>0}$ forms a neighborhood basis of e, or
        \item $f^{-m}(U_e) \subsetneq U_e$ and $\{f^{-nm}(U_e)\}_{n>0}$ forms a neighborhood basis of e.
    \end{enumerate}
    We say that a mapping class is end-periodic if it has an end-periodic representative. 
\end{definition}

Note that if $f \in \SHo(S)$ is end-periodic, then we may replace the $U_e$ appearing in the definition with combinatorial neighborhoods.  

\begin{proof}[Proof of \Cref{thm:mainep}]
    Let $\cR$ be the rigid structure defining $\SHo(S)$. We first pass to a power of $f$ so that $f \in \PSHo(S)$; abusing notation, we still call this map $f$ and we may assume that $m=1$ in the definition of end-periodicity. Fix an enumeration of the ends of $S$ as $e_1,e_2,\ldots,e_n$ and for each $i=1,\ldots,n$, let $\Phi_i(f)$ be the $i$-th coordinate of the flux homomorphism. That is, $\Phi_i(f)$ exactly measures the genus of the subsurface $U_{e_{i}} \setminus f^{\pm}(U_{e_i})$.

    Set $M = \lcm(\Phi_{1}(f),\ldots,\Phi_{n}(f))$ and let $\cR'$ be the new rigid structure obtained by gluing together $M$-pieces of $\mathcal R$  to form the new pieces and let $\SHo'(S)$ be the corresponding surface Houghton group. By \cite[Theorem 1.1(1)]{AraDomLei}, $\SHo'(S)$ and $\SHo(S)$ are commensurable. Additionally, since $f \in \SHo(S)$, there is some defining surface $Z$ outside of which $f$ is $\cR$-rigid. Since $\cR'$ is obtained by gluing pieces of $\cR$ together, there is some potentially larger subsurface $Z'$ outside of which $f$ is $\cR'$-rigid. 

    Now let $g \in \PSHo'(S)$. Take a large suited surface $Z_g$ that contains $Z'$ and outside of which $g$ is $\cR'$-rigid. Write $U_1,\ldots,U_n$ for the components of $S \setminus Z_g$ that see each end $e_1,e_2,\ldots,e_n$. We verify that $g$ is asymptotically conformal by showing that it is on each subsurface $U_i$. 

    For any fixed $i$ we have that $\Phi_i(g) = m_iM$ for some $m_i \in \mathbb Z$. By the definition of $M$, there exists some $k_i \in \mathbb Z$ so that $k_i \Phi_i(f) = m_i M$. Thus both $g$ and $f^{k_i}$ are $\cR'$-rigid on $U_i$ and hence agree. Therefore, $g$ is asymptotically conformal on $U_i$, since $f^{k_i}$ is. Finally, as $Z_g$ is a compact subsurface, we conclude that $g \in \Mod_0(X)$.  Therefore, $\PSHo'(S) < \Mod_0(X)$.  Because $X$ had bounded pieces for $\cR$ it also has bounded pieces for $\cR'$.  Thus by \Cref{thm:mainbddpieces}, $\PSHo'(S)$ has finite index in $\Mod_0(X)$.  Since $\PSHo'(S)$ has finite index in $\PSHo(S)$, it follows that $\SHo(S) \cap \Mod_0(X)$ has finite index in both $\SHo(S)$ and $\Mod_0(X)$, as required.
\end{proof}

\section{$L^{p}$ quasiconformal deformation space}\label{sec:lp}
As mentioned in the introduction, we now translate our results to the $L^{p}$ quasi-conformal deformation spaces $\T^{p}(X)$, for $p \geq 2$; these are also known as $p$-integrable Teichm\"{u}ller spaces. First we need to introduce some background on these spaces. We direct the reader to \cite{Yanagishita2014,Yanagishita2017} for more details. 

\begin{remark}
    We fix $p\geq2$ throughout this section. Some of the results of Yanagashita from \cite{Yanagishita2014} still hold when $p \geq 1$ and we direct the reader to that paper for more details. However, the condition that $p\geq 2$ is necessary for proving that the Bers projection is continuous. See \cite[Proposition 3.6]{Yanagishita2014} and the remark that immediately follows for more details on this. When one considers the \emph{universal} $L^{p}$ Teichm\"{u}ller space, i.e. $\T^{p}(D)$, one can still prove that the Bers embedding is continuous and equip this space with the structure of a complex Banach manifold even for $1\leq p < 2$ \cite{WM2023}.
\end{remark}

Given a hyperbolic Riemann surface $X$, write $\rho_\hyp(z) |dz| = \frac{|dz|}{2 \Im z}$ for the Poincar\'e metric on $\hyp$, and $\rho_X$ for the descent to $X$.  We write $\rho_X^2$ and $dA_X$ for the associated Riemannian metric and Poincar\'e area form.

For each $p \geq 1$, we let $\CL^p(X)$ denote the space of $p$--integrable $(-1,1)$--forms, that is, $\mu$ such that
\[ \| \mu\|_p = \left( \int_X |\mu|^p \, dA_X \right)^{\frac1p} < \infty. \]

Further, let
\[ \CL^{p,\infty}(X) = \CL^p(X) \cap \CL^\infty(X),\]
with the norm $\| \mu \|_p+\| \mu \|_\infty$.

Following Yanagishita \cite{Yanagishita2014} (see also \cite{Yanagishita2017} for the $p=2$ case), we define the {\em $L^p$ quasiconformal deformation space} (or \emph{$p$-integrable Teichm\"{u}ller space}), $\T^p(X) \subset \T(X)$ to be the subspace consisting of $[\mu] \in \T(X)$ with representative \[\mu \in \Bel^p(X) = \Bel(X) \cap \CL^p(X).\] 
If $f \colon X \to Y$ is quasiconformal and $\mu_f \in \Bel^p(X)$, then we say that $f$ is {\em $p$--integrably quasiconformal}. 
 
If $f \colon X \to Y$ is any quasi-conformal map, we have a natural change of basepoint homeomorphism $R_{f}:\T(X) \to \T(Y)$ given by $R_{f}\cdot [h:X \rightarrow Z] = [h \circ f^{-1}:Y \rightarrow Z]$. Yanagashita proves the following. 

\begin{proposition}\cite[Proposition 5.1]{Yanagishita2014}
    If $\mu_{f} \in \Bel^{p}(X)$, i.e. if $[f:X \rightarrow Y] \in \T^{p}(X)$, then $R_{f}$ is a homeomorphism of $\T^{p}(X)$ onto $\T^{p}(Y)$ with the topologies from the norm $||\cdot||_{p}+||\cdot||_{\infty}$.  
\end{proposition}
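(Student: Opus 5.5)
The proposition is Yanagishita's: the change of basepoint $R_f$ is a homeomorphism from $\T^p(X)$ onto $\T^p(Y)$ whenever $\mu_f \in \Bel^p(X)$. I would prove it at the level of Beltrami coefficients. For $[h \colon X \to Z]$ with $\mu_h \in \Bel^p(X)$, the composition formula gives the Beltrami coefficient of $h\circ f^{-1}$ on $Y$. Pulled back to $X$, it is
\[ \frac{\mu_h - \mu_f}{1-\overline{\mu_f}\,\mu_h}\cdot \theta_f ,\]
where $\theta_f = \overline{\partial f}/\partial f$ is unimodular.

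\textbf{Step 1: well-definedness.} Since $\|\mu_f\|_\infty,\|\mu_h\|_\infty \le k<1$, the denominator is bounded below by $1-k^2$. Hence the pointwise modulus is at most $(|\mu_h|+|\mu_f|)/(1-k^2)$, which lies in $\CL^p(X)$. To transport the $L^p$ norm from $X$ to $Y$, I would use that a quasiconformal map between hyperbolic surfaces distorts hyperbolic area density only by a controlled factor. Concretely, the Jacobian of $f$ relative to $\rho_X^2$ and $\rho_Y^2$ is bounded above and below in terms of $K(f)$. This is the standard fact, via Douady--Earle or quasi-isometry estimates, that $f$ may be replaced within its class by a biLipschitz representative, and I would replace $f$ by such a representative first. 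So the coefficient lies in $\Bel^p(Y)$, and $R_f$ maps $\T^p(X)$ into $\T^p(Y)$. The same argument applied to $f^{-1}$, whose coefficient is again $p$-integrable by this formula, gives the inverse map $R_{f^{-1}}$. Thus $R_f$ is a bijection.

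\textbf{Step 2: continuity.} First, the formula above is continuous as a map $\Bel^p(X)\to\Bel^p(Y)$ in the norm $\|\cdot\|_p+\|\cdot\|_\infty$. The difference of two such expressions for $\mu_h,\mu_{h'}$ is bounded pointwise by $C|\mu_h-\mu_{h'}|$ with $C$ depending on $k$, and this gives Lipschitz control in both norms. The topology on $\T^p$ is the quotient topology, or equivalently the one coming from the Bers embedding. To pass to the quotient, I need local continuous sections $\T^p(X)\to\Bel^p(X)$, and these come from the Bers embedding and its local holomorphic sections in the $p$-integrable setting (Ahlfors--Weill type sections). This is exactly where $p\ge 2$ and continuity of the Bers projection enter.

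\textbf{Main obstacle.} I expect the hardest step to be this quotient-topology issue: showing that $\T^p$ carries a topology in which continuous sections exist, namely that the Bers projection $\Bel^p\to\T^p$ is continuous and open with local sections. I would handle it by invoking Yanagishita's Proposition 3.6 together with the Ahlfors--Weill section. Once that is in place, composing the continuous formula with the sections and the projection shows that $R_f$ and $R_{f^{-1}}$ are continuous. Hence $R_f$ is a homeomorphism.
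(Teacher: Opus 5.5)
The paper gives no proof of this statement; it is imported from Yanagishita. So your argument has to stand on its own, and Step~1 has a genuine gap.

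Your estimate uses two properties of the \emph{same} map $f$:
(a) $\mu_f\in\Bel^p(X)$, so that $(|\mu_h|+|\mu_f|)/(1-k^2)\in\CL^p(X)$;
(b) a two-sided bound on the Jacobian of $f$ relative to $\rho_X^2$ and $\rho_Y^2$, so that $L^p$ norms transfer from $X$ to $Y$.
The hypothesis gives (a) for one representative. Property (b) fails for general quasiconformal maps: a radial stretch $z\mapsto z|z|^{\alpha}$ on a small disk, glued to the identity, has compactly supported dilatation but Jacobian tending to $0$ or $\infty$ at the center. The Douady--Earle representative does have (b), but it is a different map with a different Beltrami coefficient, and you give no reason for that coefficient to be $p$-integrable. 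The condition $[f]\in\T^p(X)$ says that \emph{some} representative is $p$-integrable, not that every one is. So once you ``replace $f$ by such a representative first,'' the pointwise bound no longer produces an $\CL^p$ function. The same hole reappears for $f^{-1}$, and in Step~2, whose lift must land in $\Bel^p(Y)$ before the Lipschitz estimate means anything.

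What is missing is exactly the substantive content of the proposition: a representative of $[f]$ that is simultaneously hyperbolically biLipschitz and $p$-integrably quasiconformal. Given that, your composition formula and change of variables do show that such maps are closed under composition and inversion (the fact the paper later extracts from Yanagishita's proof in \Cref{cor:components}), and the rest of your argument goes through. Producing such a representative takes a real estimate. One option is a Cui-type bound showing that the conformally natural Douady--Earle extension of the boundary values of $f$ has $p$-integrable dilatation whenever some extension does; equivariance lets you integrate over a fundamental domain. Another is a chain of Ahlfors--Weill maps with small harmonic coefficients, together with a proof that these maps have controlled hyperbolic distortion.

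By contrast, the step you flag as the main obstacle is comparatively soft. If $\T^p$ carries the quotient topology from $\Bel^p$ with $\|\cdot\|_p+\|\cdot\|_\infty$, then a continuous lift $\Bel^p(X)\to\Bel^p(Y)$ commuting with the projections descends to a continuous map, with no sections needed. Continuity of the Bers projection and Ahlfors--Weill sections (available in Yanagishita's setting: $p\ge2$ and $X$ satisfying Lehner's condition) are needed only to identify that topology with the one induced by the Bers embedding.
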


This provides us the following reinterpretation which roughly states that $L^{p}$-Teichm\"{u}ller components are either disjoint or coincide. 

\begin{corollary}\label{cor:components}
    Either $R_{f}(\T^{p}(X)) = \T^{p}(Y)$ or $R_{f}(\T^{p}(X)) \cap \T^{p}(Y) = \emptyset$. In particular, the first case occurs exactly when $[f:X \rightarrow Y] \in \T^{p}(X)$. 
\end{corollary}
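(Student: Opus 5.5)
The plan is to deduce \Cref{cor:components} directly from the preceding Proposition (Yanagishita's Proposition 5.1), the definition of $R_f$, and the fact that each $R_f$ is a bijection of the full Teichm\"uller spaces with inverse $R_{f^{-1}}$. The key observation is that $\T^p(X)$ and $\T^p(Y)$ are equivalence classes for the relation ``differ by a $p$--integrably quasiconformal map'', so their images under the change-of-basepoint maps either coincide or are disjoint.

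First, suppose $[f \colon X\to Y]\in \T^p(X)$. By Teichm\"uller equivalence we may choose the representative $f$ so that $\mu_f \in \Bel^p(X)$. The Proposition then says $R_f$ maps $\T^p(X)$ homeomorphically onto $\T^p(Y)$, which is the first alternative.

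Next, suppose $R_f(\T^p(X)) \cap \T^p(Y) \neq \emptyset$. Then there is some $[h \colon X\to Z]\in\T^p(X)$ with $[h\circ f^{-1} \colon Y\to Z] \in \T^p(Y)$. I will use two facts, each an application of the Proposition. First, the inverse of a $p$--integrably quasiconformal class is again $p$--integrable: applying the Proposition to $R_h$, the basepoint $[\id_X]\in\T^p(X)$ is sent to $[h^{-1} \colon Z\to X]\in\T^p(Z)$. Second, $p$--integrable classes are closed under composition: since $[h\circ f^{-1}]\in \T^p(Y)$, the Proposition gives $R_{h\circ f^{-1}}(\T^p(Y))=\T^p(Z)$. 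Now $R_{h\circ f^{-1}}$ is a bijection $\T(Y)\to \T(Z)$ with inverse $R_{f\circ h^{-1}}$, so $R_{f \circ h^{-1}}(\T^p(Z)) = \T^p(Y)$. Evaluating at $[h^{-1}] \in \T^p(Z)$ gives $[h^{-1}\circ h \circ f^{-1}] = [f^{-1} \colon Y\to X]\in\T^p(Y)$. Applying the Proposition to $R_{f^{-1}}$ yields $R_{f^{-1}}(\T^p(Y))=\T^p(X)$, and evaluating at $[\id_Y]$ gives $[f \colon X\to Y]\in\T^p(X)$. We are therefore back in the first case, and $R_f(\T^p(X))=\T^p(Y)$.

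These two steps give the dichotomy, and the ``in particular'' follows from them. If $[f]\in\T^p(X)$, the first case holds. Conversely, if $R_f(\T^p(X)) = \T^p(Y)$, the intersection is nonempty (for instance it contains $[\id_Y]$), and the second step gives $[f]\in\T^p(X)$.

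The main obstacle is bookkeeping rather than analysis. One must make sure the Proposition is applied with correct base surfaces and correct directions of the maps, and that membership in $\T^p$ is a property of the Teichm\"uller class, not of the chosen representative. Both points follow from the definition of $\T^p(X)$ as classes admitting a representative in $\Bel^p(X)$.
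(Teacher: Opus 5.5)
Your proof is correct and has the same outline as the paper's. The first alternative comes directly from the Proposition; in the second case you take $[h]\in\T^p(X)$ with $[h\circ f^{-1}]\in\T^p(Y)$ and combine it with the $p$--integrability of $h^{-1}$ to conclude $[f]\in\T^p(X)$. The only difference is how the closure facts are justified: the paper cites closure under composition from the \emph{proof} of Yanagishita's Proposition 5.1 and leaves the final inversion implicit, whereas you derive both the inverse and composition closures at the level of Teichm\"uller classes from the \emph{statement} of the Proposition together with $R_f^{-1}=R_{f^{-1}}$, which makes your argument more self-contained.
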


\begin{proof}
    If $[f:X \rightarrow Y] \in \T^{p}(X)$ then the previous proposition exactly states that $R_{f}(\T^{p}(X)) = \T^{p}(Y)$. 

    Now, suppose that $R_{f}(\T^{p}(X)) \cap \T^{p}(Y) \neq \emptyset$. I.e., there exists some $[h:X \rightarrow Z] \in \T^p(X)$ such that $R_{f}\cdot [h:X\rightarrow Z] \in \T^{p}(X)$. Thus (up to isotopy) the map $h \circ f^{-1}:Y \rightarrow Z$ is $L^{p}$-quasi-conformal. We also know that $h^{-1}:Z \rightarrow X$ is $L^{p}$-quasi-conformal. Finally, we use the fact contained in the proof of \cite[Proposition 5.1]{Yanagishita2014} that the composition of $L^{p}$-quasi-conformal maps is again $L^{p}$-quasi-conformal to conclude that $[f:X \rightarrow Y]\in \T^{p}(X)$ and hence we are in the first case and $R_{f}(\T^{p}(X)) = \T^{p}(Y)$.
\end{proof}

\subsection{$L^{p}$ modular groups}\label{ssec:lpmodgroups}

We define $\Mod^p(X) < \Mod(X)$ to be the subgroup consisting of mapping classes represented by quasiconformal homeomorphisms $f$ such that $\mu_{f} \in \Bel^{p}(X)$. Note that by \Cref{cor:components} this is equivalent to the subgroup of $\Mod(X)$ that stabilizes $\T^{p}(X)$ as a subset of $\T(X)$. We will refer to this group as the \emph{$L^{p}$ modular group}.

Note that if $X$ is compatible (as in \Cref{sec:compatiblestructures}), then any element of $\SHo(S)$ is represented by a quasiconformal homeomorphism $f \colon X \to X$ for which $\mu_f$ is compactly supported.  Consequently, $\SHo(S) < \Mod^p(X)$ for all $p \geq 1$.  \Cref{sec:examples} provides more exotic examples.

\subsection{$p$-integrable to asymptotically conformal}\label{ssec:pintotoac} 

In this section we will see that under certain geometric conditions, we have $\T^{p}(X) \subset \T_{0}(X)$. This is done in \cite[Section 6]{Yanagishita2014}, but for the sake of exposition we give an outline of the ideas here. The key observation is that the $p$-norm provides an upper bound on the $\infty$-norm. However, we first need to introduce spaces of quadratic differentials.

Setting $\hyp^{*}$ to be the lower half-plane and $\rho_{\hyp^{*}}(z)|dz| = \frac{|dz|}{-2 \Im z}$, we denote $X^* = \hyp^*/\Gamma$ to be the conjugate surface to $X = \hyp/\Gamma$. We again write $\rho_{X^*}$ for the descent of $\rho_{\hyp^*}|dz|$ to $X^*$, with $dA_{X^*}$ denoting the associated area form. We then let $A^\infty (X^*)$ be the Banach space of holomorphic, essentially bounded,$(2,0)$-forms (i.e. quadratic differentials) on $X^{*}$. That is, those such $\phi$ with 
\begin{align*}
    ||\phi||_{\infty} = \sup_{z\in X^*} |\phi(z)| \rho_{X^*}(z)^{-2} < \infty,
\end{align*}
For any $X = \hyp/\Gamma$ of the first kind, the Bers embedding defines a complex manifold structure on $\T(X)$ modeled on $A^\infty (X^*)$, e.g. see \cite[Chapter 6]{GardLak}. 

Similarly, we denote by $A^{p}(X^*)$ the Banach space of $p$-integrable, holomorphic, quadratic differentials. That is, those $\phi$ such that 
\begin{align*}
    ||\phi||_{p} = \left(\int_{X^*} |\phi(z)|^{p} \rho_{X^{*}}(z)^{-2p} dA_{X^*} \right)^{\frac{1}{p}} < \infty.
\end{align*}

The key geometric condition that we will use is the following: 

\begin{definition}
    A Riemann surface, $X$, satisfies \textbf{Lehner's condition} if the infimum of the length of all simple closed geodesics on $X$ is positive.
\end{definition}

Equipped with this definition, we now recall a useful result of  Yanagashita \cite[Proposition 4.2]{Yanagishita2014}: 

\begin{lemma}\cite[Proposition 4.2]{Yanagishita2014}\label{lem:Lpestimate}
    Let $X=\hyp/\Gamma$ be a Riemann surface  satisfying Lehner's condition. There exists a constant $C_p > 0$, depending on $p$ and the lower bound given by Lehner's condition, so that for all $\phi \in A^{p}(X^*)$ 
    \begin{align*}
        ||\phi||_{\infty} \leq C_p ||\phi||_p.
    \end{align*}
\end{lemma}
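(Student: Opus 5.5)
We plan to argue locally on the universal cover using subharmonicity, and to handle the cusps of $X^*$ separately. Let $\delta>0$ be the lower bound on lengths of simple closed geodesics supplied by Lehner's condition. By the collar lemma and the Margulis lemma, the thin part of $X^*$ then consists only of standard cusp neighborhoods. Hence there is $r=r(\delta)>0$ with the following property: for every point $x\in X^*$ outside the union of the (disjoint, universal-size) cusp neighborhoods, the hyperbolic disk of radius $r$ about $x$ is embedded. In particular, if $\widetilde x\in\hyp^*$ is a lift of $x$ and $B=B_r(\widetilde x)$, then $\gamma B\cap B=\emptyset$ for all $\gamma\in\Gamma\setminus\{1\}$.

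\textbf{Thick part.} Lift $\phi$ to a $\Gamma$--invariant holomorphic quadratic differential on $\hyp^*$. Precompose with a M\"obius map $T$ from the unit disk to $\hyp^*$ sending $0$ to $\widetilde x$, and set $\psi=(\phi\circ T)(T')^2$. This is a holomorphic function, and $|\phi(\widetilde x)|\rho^{-2}(\widetilde x)=c_0|\psi(0)|$ for a universal constant $c_0$. Since $|\psi|^p$ is subharmonic, its value at $0$ is at most its Euclidean average over the Euclidean disk corresponding to $B$. On that disk the hyperbolic density is comparable to the Euclidean one, with constants depending only on $r$. Therefore
\[ |\phi(\widetilde x)|^p\rho^{-2p}(\widetilde x)\le C(r)\int_B|\phi|^p\rho^{-2p}\,dA \le C(r)\,\|\phi\|_p^p, \]
where the last inequality uses that $B$ embeds in $X^*$.

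\textbf{Cusp part.} Let $V$ be a standard cusp neighborhood. Uniformize it by the punctured disk $0<|w|\le r_0$, with $r_0$ universal, and write $\phi=f(w)\,dw^2$ there. The hyperbolic density is $\rho(w)=\bigl(|w|\,|\log|w||\bigr)^{-1}$ up to constants. Finiteness of $\int_V|f|^p\rho^{2-2p}\,dA$ forces $f$ to have at most a simple pole at $0$, so $wf(w)$ is holomorphic on $|w|\le r_0$. The maximum principle then gives $|wf|\le\max_{|w|=r_0}|wf|$. Consequently
\[ |f|\rho^{-2} = |wf|\cdot|w|\log^2|w| \le C\max_{|w|=r_0}|wf|, \]
because $|w|\log^2|w|$ is bounded on the disk. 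The boundary horocycle $|w|=r_0$ lies in the thick part, so there $|wf|$ is comparable to $|\phi|\rho^{-2}$, which is controlled by the thick-part estimate. Taking the supremum over both regions gives $\|\phi\|_\infty\le C_p\|\phi\|_p$, with $C_p$ depending only on $p$ and $\delta$.

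\textbf{Main obstacle.} We expect the delicate step to be the cusp analysis. Near a cusp the injectivity radius tends to zero, so the naive ``embedded ball plus mean value'' argument breaks down. The plan is to avoid any multiplicity count there and instead use the pole-order argument above. If that route turns out to be awkward, the fallback is to use disks of fixed radius $r$ anyway and bound the overlap multiplicity near the cusp by roughly $1/\mathrm{inj}(x)$. One would then check that this growth is absorbed by the decay coming from $f$ having at most a simple pole.
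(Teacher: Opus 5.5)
The paper does not prove this lemma; it quotes the estimate from Yanagishita, so there is no in-text argument to compare yours against. On its own terms your proposal is a correct, self-contained proof. It has two parts. The first is a mean-value inequality for the subharmonic function $|\psi|^p$ on hyperbolic disks of a fixed radius $r(\delta)$, which embed in $X^*$ away from the cusps. The second is a cusp argument: $p$-integrability forces $wf(w)$ to be holomorphic at the puncture, so the maximum principle carries the thick-part bound from the boundary horocycle inward. Compared with the bare citation, your version shows exactly why $C_p$ depends only on $p$ and $\delta$, and why cusps (which Lehner's condition allows) cause no trouble. In particular, the fallback multiplicity argument you mention is unnecessary.

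Three steps deserve a line each when you write it out.

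First, Lehner's condition as stated in the paper only bounds simple closed geodesics. You should note that the shortest geodesic loop at a point is simple, so it is either parabolic or freely homotopic to a simple closed geodesic of length at least $\delta$. Combined with the disjointness of the standard (horocycle length $2$) cusp neighborhoods, this gives $\mathrm{inj}(x)\ge\min(\delta/2,\sinh^{-1}1)$ off the cusps. The bound also holds on the boundary horocycles, which is where you apply the thick estimate.

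Second, the pole-order claim is a Laurent-coefficient computation, valid for $p\ge 1$:
$\frac{1}{2\pi}\int_0^{2\pi}|f(te^{i\theta})|^p\,d\theta\ge|a_n|^p t^{np}$, and $\int_0 t^{(n+2)p-1}|\log t|^{2p-2}\,dt=\infty$ for $n\le -2$.

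Third, for the standard cusp neighborhood $r_0\le e^{-2}$, so $t\log^2 t$ is increasing on $(0,r_0]$. Your estimate therefore gives directly $\sup_V|\phi|\rho^{-2}\le\sup_{\partial V}|\phi|\rho^{-2}$.

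The paper's normalization $\rho_\hyp=\frac{1}{2\Im z}$ only changes the constants.
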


Note that if $X$ has bounded geometry, then $X$ satifies Lehner's condition. The previous lemma implies that for $X$ satisfying Lehner's condition, we have $A^{p}(X^*) \subset A^{\infty}(X^*)$. It is also in the presence of this condition that Yanagashita equips $\T^{p}(X)$ with the structure of a complex Banach manifold modeled locally on $A^{p}(X^*)$ \cite[Theorem 4.4 \& Proposition 5.1]{Yanagishita2014}, with charts are given by the Bers embedding. This is the reason that we define quadratic differentials on $X^*$ as opposed to $X$.

We can also introduce an analog of asymptotic conformality for quadratic differentials. For $\phi \in A^{\infty}(X^*)$, we say that $\phi$ \textbf{vanishes at infinity} if for every $\epsilon > 0$, there exists a compact subset $E \subset X^*$ so that 
\begin{align*}
    ||\phi\vert_{X^* \setminus K}||_{\infty} < \epsilon.
\end{align*}
Denote by $A_0(X^*)$ the subset of bounded holomorphic quadratic differentials that vanish at infinity. Now, $\T_0(X)$ is given a complex manifold structure modeled on the Banach space $A_0(X^*)$ in \cite{EGL2000}. 

Yanagashita makes use of \Cref{lem:Lpestimate} in order to prove the following statement relating $p$-integrable quadratic differentials and those that vanish at infinity.  

\begin{proposition}\cite[Proposition 6.6]{Yanagishita2014}\label{prop:QDLp}
    Let $X = \hyp/\Gamma$ be a Riemann surface satisfying Lehner's condition. Then $A^{p}(X^*) \subset A_{0}(X^*)$. 
\end{proposition}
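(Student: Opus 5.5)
The plan is to combine the pointwise estimate of \Cref{lem:Lpestimate} with a localization argument: the $p$-norm of $\phi$ over the part of $X^*$ far out in the ends tends to zero, and we want this small tail $p$-norm to control the pointwise sup norm there. Fix $\phi \in A^p(X^*)$ and $\epsilon>0$. Since $\int_{X^*} |\phi|^p\rho_{X^*}^{-2p}\,dA_{X^*}<\infty$, there is a compact $E_0\subset X^*$ with $\int_{X^*\setminus E_0}|\phi|^p\rho^{-2p}\,dA<\delta^p$, where $\delta$ will be chosen in terms of $\epsilon$. The goal is to find a compact $E \supset E_0$ so that $|\phi(z)|\rho_{X^*}(z)^{-2}<\epsilon$ for every $z\notin E$.

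The key step is a local version of \Cref{lem:Lpestimate}. Lift to $\hyp^*$ and let $\tilde z$ be a lift of $z$. The function $\tilde\phi(w)\rho_{\hyp^*}(w)^{-2}$ is invariant under the isometries of $\hyp^*$, so we may normalize by a M\"obius map taking $\tilde z$ to a fixed base point. The mean value property for the holomorphic function $\tilde\phi$ on a hyperbolic disk of fixed radius $r$ about $\tilde z$, followed by H\"older's inequality, gives
\[ |\phi(z)|\rho_{X^*}(z)^{-2}\le C(r,p)\left(\int_{B_r(\tilde z)}|\tilde\phi|^p\rho_{\hyp^*}^{-2p}\,dA\right)^{1/p}, \]
since $\rho^{-2}$ is comparable to a constant on that disk. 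Lehner's condition is what converts the integral over the disk in $\hyp^*$ into one on $X^*$: if the lower bound on closed geodesic lengths is $2\ell>0$ and $r$ is taken smaller than a function of $\ell$, the number of deck translates of $B_r(\tilde z)$ meeting any given point is bounded by a constant $m=m(\ell,r)$. Hence the integral over $B_r(\tilde z)$ is at most $m$ times the integral of $|\phi|^p\rho^{-2p}$ over the image $B_r(z)\subset X^*$. This is exactly the estimate underlying \Cref{lem:Lpestimate}, and I would reuse its proof rather than its statement.

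Finally, let $E$ be the closed $r$-neighborhood of $E_0$, which is compact since $X^*$ is complete. If $z\notin E$ then $B_r(z)\subset X^*\setminus E_0$, so $|\phi(z)|\rho^{-2}\le C(r,p)\,m^{1/p}\delta$, and choosing $\delta$ small makes this less than $\epsilon$. This shows $\phi\in A_0(X^*)$; it also shows $\phi\in A^\infty(X^*)$, consistent with \Cref{lem:Lpestimate}. The main obstacle I expect is the bounded-multiplicity claim, which needs a uniform choice of $r$. Near cusps a uniform multiplicity bound would fail, but Lehner's condition excludes parabolic elements, so there are no cusps. More precisely, the bound comes from the fact that the number of $\gamma\in\Gamma$ with $d(w,\gamma w)<2r$ is controlled uniformly once the translation lengths of hyperbolic elements are bounded below. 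I would verify this using the collar lemma or Margulis-type counting, and this is the step that needs care.
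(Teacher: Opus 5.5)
The paper gives no argument of its own here; it cites Yanagishita, who derives the result from the sup-norm estimate of \Cref{lem:Lpestimate}. Your localization of that estimate is a natural way to do this: a sub-mean-value bound on hyperbolic disks of fixed radius, with bounded overlap, turns a small tail $L^p$-norm into a small tail sup-norm. When the injectivity radius of $X^*$ is bounded below, your argument is complete; taking $r$ below that bound even gives multiplicity $m=1$. This covers every surface to which the paper applies the proposition, since bounded pieces give bounded geometry and $S$ has no punctures.

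\emph{The gap is cusps.} The gap is your claim that ``Lehner's condition excludes parabolic elements.'' As the paper defines it, Lehner's condition only bounds lengths of simple closed geodesics from below and does not rule out punctures. A once-punctured torus, or a $\mathbb Z$-cover of a once-punctured genus-two surface, satisfies it. Deep in a cusp your estimate fails exactly as you anticipated. The multiplicity of $B_r(\tilde z)$ grows with the height of $z$. Shrinking $r$ to compensate makes $C(r,p)$ blow up instead. So the tail $L^p$ bound gives no uniform control there. For a punctured finite-type surface, decay into the cusps is the entire content of the statement, so this case needs a separate idea.

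\emph{A fix.} Write a standard cusp region as $\{\Im w>1\}/\langle w\mapsto w+1\rangle$. Then $\phi=\sum_{n\ge 1}a_n e^{2\pi i n w}$; terms with $n\le 0$ are excluded because $|\phi|\rho^{-2}$ is bounded, or directly by $p$-integrability. Apply the maximum principle to $\phi\, e^{-2\pi i w}$, viewed as a holomorphic function of $q=e^{2\pi i w}$. This gives $|\phi(w)|\le e^{-2\pi(\Im w-1)}\max_{\Im w'=1}|\phi(w')|$. Since $\rho^{-2}$ is comparable to $(\Im w)^2$ and $t^2e^{-2\pi(t-1)}\le 1$ for $t\ge 1$, the supremum of $|\phi|\rho^{-2}$ over the cusp region is at most a universal constant times its supremum on the bounding horocycle. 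That horocycle lies in the thick part. There your local estimate applies, provided $r$ is chosen below the injectivity radius of the complement of the standard cusp regions; the thick--thin decomposition bounds this below in terms of $\ell$. Finitely many cusps have horocycles meeting your compact set. For each of these, the same decay factor lets you truncate the cusp at a finite height and add the truncated part to $E$. With this addition your argument proves the proposition as stated.
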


Given any holomorphic quadratic differential $\phi$ one can form the associated $(-1,1)$-- differential,
\begin{align*}
    \mu[\phi] = \frac{\overline{\phi}}{\rho_{X}^{2}},
\end{align*}
where in terms of universal covers, $\bar \phi$ is given by $\bar \phi(z) = \overline{\phi(\bar z)}$.
Beltrami differentials arising from this construction are often known as \emph{harmonic Beltrami differentials}.
If $\phi \in A^p(X^*)$, then this will be an element of $\CL^{p,\infty}(X)$, provided that we started with a quadratic differential in $A^{p}(X^{*})$.  Indeed, observe that
\[ \|\mu[\phi]\|_p^p = \int_X \Big| \frac{\overline{\phi}}{\rho_X^2} \Big|^p dA_X  = \int_{X^{*}} |\phi|^{p} \rho_{X^{*}}^{-2p} dA_{X^{*}}  = \|\phi\|_p^p \] 
Similarly, given a quadratic differential $\phi \in A_0(X^*)$, then $\mu[\phi]$ will be in $\Bel_0(X)$. 

Finally we can use this to obtain the same statement on the level of Teichm\"{u}ller spaces. 

\begin{lemma}
Let $X = \hyp/\Gamma$ be a Riemann surface satisfying Lehner's condition. For any $[\mu], [\mu'] \in \T^p(X)$, there exists an asymptotically conformal map $f:X^{\mu} \to X^{\mu'}$ such that $f^{\mu'} \simeq f \circ f^\mu$.
\end{lemma}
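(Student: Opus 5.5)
Since $[\mu],[\mu']\in\T^p(X)$ and the $L^p$ composition property recorded in the proof of \Cref{cor:components} is available, I first reduce to the basepoint case. Set $Y = X^{\mu}$. By \cite[Proposition 5.1]{Yanagishita2014}, $R_{f^\mu}$ carries $\T^p(X)$ onto $\T^p(Y)$. The point $[f^{\mu'}\circ (f^\mu)^{-1}\colon Y\to X^{\mu'}]$ is the image of $[\mu']$, so it lies in $\T^p(Y)$. It therefore suffices to prove that every point of $\T^p(Y)$ has an asymptotically conformal representative, i.e.\ $\T^p(Y)\subset \T_0(Y)$. Such a representative $f\colon Y\to X^{\mu'}$ satisfies $f\simeq f^{\mu'}\circ(f^\mu)^{-1}$, and hence $f^{\mu'}\simeq f\circ f^\mu$. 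For this I need $Y$ to satisfy Lehner's condition. This holds because $f^\mu$ is $K$--quasiconformal, so by \Cref{lem:wolpert} lengths of closed geodesics on $Y$ are at least $1/K$ times those on $X$.

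I then prove $\T^p(Y)\subset\T_0(Y)$ by a local-chart and connectedness argument. Near the basepoint, Yanagishita's complex Banach manifold structure on $\T^p(Y)$ (charts given by the Bers embedding, modeled on $A^p(Y^*)$) comes with a local holomorphic section given by harmonic Beltrami differentials. Every point in a small $\T^p$--neighborhood of the basepoint is therefore represented by $\mu[\phi]$ with $\phi\in A^p(Y^*)$ of small norm (the Ahlfors--Weill section). By \Cref{prop:QDLp}, $\phi\in A_0(Y^*)$, so $\mu[\phi]\in \Bel_0(Y)$. Hence a neighborhood of the basepoint in $\T^p(Y)$ lies in $\T_0(Y)$.

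To pass from this neighborhood to all of $\T^p(Y)$, I apply the same reasoning at every basepoint. Any point $[g\colon Y\to W]\in\T^p(Y)$ has $W$ satisfying Lehner's condition, again by \Cref{lem:wolpert}. So a $\T^p$--neighborhood of $[g]$ consists of classes $[h\circ g]$ with $h\colon W\to W'$ asymptotically conformal. This shows that $\T^p(Y)\cap\T_0(Y)$ is open in $\T^p(Y)$. For closedness, I use that $\T_0(Y)$ is closed in $\T(Y)$ by Gardiner--Sullivan, and that the $\T^p$ topology is finer than the Teichm\"uller topology; the latter holds because the norm $\|\cdot\|_p+\|\cdot\|_\infty$ dominates $\|\cdot\|_\infty$. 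Finally, $\T^p(Y)$ is connected, since $\mu\mapsto t\mu$, $t\in[0,1]$, is a path in $\Bel^p(Y)$. Therefore $\T^p(Y)\subset\T_0(Y)$.

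The main obstacle is the local section step. One must check that Yanagishita's charts admit a section through harmonic differentials $\mu[\phi]$ with $\phi\in A^p$, not merely some $L^p$ representative. If this is not directly available from \cite{Yanagishita2014}, I would instead try to show directly that $\Bel^p(Y)\subset$ (the Teichm\"uller classes of) $\Bel_0(Y)$. The first attempt would be to compare an $L^p$ Beltrami coefficient with its projection to $A^p$ via the Bers derivative, and then use \Cref{lem:Lpestimate} together with the openness argument above.
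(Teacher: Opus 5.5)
Your proof is correct and is essentially the paper's argument. The local step is exactly what the paper does: Yanagishita's Bers embedding of $\T^p$ into $A^p$, Lehner's estimate to land in the Ahlfors--Weill ball, the harmonic representative $\mu[\phi]$, and Yanagishita's Proposition 6.6. So the ``main obstacle'' you flag is already settled by your main line, and no fallback is needed.

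The only difference is in the globalization step. The paper covers a path from $[\mu]$ to $[\mu']$ by finitely many such harmonic neighborhoods and composes the resulting asymptotically conformal maps. You instead reduce to the basepoint and run a clopen argument. Your appeal to Gardiner--Sullivan for closedness is valid but dispensable: the local step at every point of $\T^p(Y)$ already makes the complement open. You also make explicit, via Wolpert's lemma, the Lehner condition on $X^\mu$ that the paper uses only implicitly.
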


\begin{proof}
We first consider a single $[\mu] =  [f^\mu: X \rightarrow X^\mu] \in \T^p(X)$. Apply \cite[Section 6.4, Theorems 2 \& 3]{GardLak} in order to obtain a neighborhood of the identity in $\T(X^\mu)$ that is homeomorphic, under the Bers embedding, onto an open subset of $A^\infty((X^\mu)^*)$ contained in the ball of radius $2$ and containing the ball of radius $\frac{2}{3}$. Then \cite[Theorem 4.4]{Yanagishita2014} implies that, when restricted to $\T^p(X^\mu)$, the Bers embedding maps homeomorphically into $A^p((X^\mu)^*)$. Combining these facts yields a neighborhood of the identity in $\T^p(X^\mu)$ that is homeomorphic, under the Bers embedding, to an open subset of $A^p((X^\mu)^*)$ contained in the ball of radius $2$ and containing the ball of radius $\frac{2}{3}$.

Pulling this neighborhood back to $\T^p(X)$ via the map $R_{f^\mu}^{-1}$ gives a neighborhood, $\mathcal{U}([\mu])$, about $[\mu]=[f^{\mu}:X\rightarrow X^\mu]$ in $\T^p(X)$ that is homeomorphic onto an open subset of $A^p((X^\mu)^*)$ sharing these properties. In fact, though we will not make use of this, $A^p((X^\mu)^*)$ is isomorphic to $A^p(X^*)$ and (by fixing such isomorphisms) these maps give the atlas of charts that turns $\T^p(X)$ into a complex Banach manifold modeled on $A^p(X)$. See the proof of \cite[Theorem 3]{GardLak} and \cite[Sections 3 \& 4]{Yanagishita2014} for details on this manifold perspective. 

Next, Ahlfors-Weill \cite{AW62} (see \cite[Section 6.3, Lemma 6]{GardLak} for the statement we use) provides that when one further restricts to the open ball of radius $\frac{1}{2}$ in $A^p((X^\mu)^*)$, the map that sends $\phi \mapsto \mu[\phi]$ is an inverse to the Bers embedding. Therefore, for each $[\mu] \in \T^p(X)$, by shrinking the neighborhoods $\mathcal{U}([\mu])$, we have a neighborhood $\mathcal U_h([\mu])$ such that for each $[\nu]\in \mathcal U_h([\mu])$, $R_{f^\mu}([\nu]) = [\nu\circ (f^{\mu})^{-1}]$ maps under the Bers embedding into the open ball of radius $\frac{1}{2}$ in $A^p((X^{\mu})^*)$. Thus $[\nu\circ (f^{\mu})^{-1}]$ has a harmonic representative, $\nu_h \in \CL^{p,\infty}(X^\mu)$, and the discussion preceding the lemma statement and \Cref{prop:QDLp} shows that $f^{\nu_h}:X^{\mu}\to X^{\nu}$ is asymptotically conformal.

Finally, let $[\mu], [\mu'] \in \T^p(X)$. Consider a continuous path $c:[0,1] \to \T^p(X)$ such that $c(0) = [\mu]$ and $c(1) = [\mu']$, and write $[\mu_t]= c(t)$.  The set $\{\mathcal  U_h([\mu_t]): t \in [0,1]\}$ is an open cover of $c[0,1]$; by compactness, there is a partition $0 =t_1 < t_2 < \ldots < t_{n+1} = 1$, and a finite subcover $\{\mathcal  U_h([\mu_{t_1}]), \ldots, \mathcal U_h([\mu_{t_n})]\}$ such that $c[t_i,t_{i+1}] \subset \mathcal U_h([\mu_{t_i}])$ for each $i$. Now take harmonic Beltrami differentials $\nu_i \in [\mu_{t_{i+1}} \circ (f^{\mu})^{-1}]$ as above, for $i=1, \ldots, n$. By the previous paragraph, $f^{\nu_i}: X^{\mu_{t_i}} \to X^{\mu_{t_{i+1}}}$ is asymptotically conformal. Therefore $f^{\nu_n} \circ f^{\nu_{n-1}} \circ \ldots \circ f^{\nu_1} \colon X^\mu \to X^{\mu'}$ is isotopic to $f \colon X^ \mu \to X^{\mu'}$, and is asymptotically conformal.

\end{proof}

\begin{corollary}\label{cor:lpimpliesac}
    If $X$ satisfies Lehner's condition, then $\T^{p}(X) \subset \T_{0}(X)$ and $\Mod^{p}(X) < \Mod_{0}(X)$. 
\end{corollary}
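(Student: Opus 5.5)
The corollary follows from the preceding lemma, applied with the basepoint as one of the two points. I treat the two assertions in turn.

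For the inclusion $\T^p(X) \subset \T_0(X)$, take any $[\mu] \in \T^p(X)$ and apply the lemma to the pair $[0], [\mu] \in \T^p(X)$. Here $[0] = [\id \colon X \to X]$ is the basepoint, which lies in $\T^p(X)$ since the zero coefficient is $p$-integrable. The lemma gives an asymptotically conformal map $f \colon X \to X^\mu$ with $f^\mu \simeq f \circ \id = f$. So the class $[f^\mu, X^\mu]$ has an asymptotically conformal representative, namely $f$. By definition of $\T_0(X)$ this means $[\mu] \in \T_0(X)$.

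For the group statement, let $g \in \Mod^p(X)$ be represented by a quasiconformal $g \colon X \to X$ with $\mu_g \in \Bel^p(X)$. Then $[g \colon X \to X] \in \T^p(X)$, and by the first part it also lies in $\T_0(X)$. Hence there is an asymptotically conformal $h \colon X \to X$ with $h \circ g^{-1}$ homotopic to a conformal automorphism $c$ of $X$, i.e.\ $g \simeq c^{-1}\circ h$. A conformal map is asymptotically conformal, so by \Cref{L:ACgroup} the composition $c^{-1} \circ h$ is asymptotically conformal. It represents the same mapping class as $g$, provided the homotopy between them can be promoted to an isotopy. For hyperbolic surfaces homotopic homeomorphisms are isotopic, and Teichm\"uller equivalence (agreement of boundary values on $\R$) determines the mapping class, so this holds. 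Thus $g \in \Mod_0(X)$.

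The only point needing care is this last identification of the Teichm\"uller class with the mapping class. I would handle it directly with boundary values. Lifts of $g$ and of $c^{-1}\circ h$ to $\hyp$ agree on $\R$ up to the appropriate normalization, which is exactly the statement that they define the same element of $\Mod(X)$ acting on $\T(X)$.
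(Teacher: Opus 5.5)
Your proposal is correct and takes the same approach as the paper. The paper gives no separate proof and treats the corollary as immediate from the preceding lemma: taking the basepoint $[0]$ as one of the two points gives $\T^p(X)\subset\T_0(X)$, and $g\in\Mod^p(X)$ then has $[g\colon X\to X]\in\T^p(X)\subset\T_0(X)$. Your extra bookkeeping (composing with a conformal map, and upgrading the homotopy to an isotopy) simply makes that last step explicit.
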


Combining this with \Cref{cor:rigid finite index} and \Cref{thm:mainbddpieces} we immediately obtain the proof of \Cref{cor:mainlp}. We note that $X$ having bounded pieces implies that $X$ satisfies Lehner's condition by \Cref{lem:bddpiecesbddgeometry}.

\begin{proof}[Proof of \Cref{cor:mainlp}]
    Our assumption readily yields  $\SHo(S) < \Mod^{p}(X)$, and \Cref{cor:lpimpliesac} gives that $\Mod^p(X) < \Mod_{0}(X)$. Thus we can apply \Cref{thm:mainbddpieces}.
\end{proof}

\section{Exotic examples}\label{sec:examples}
In this section we fix a rigid structure $\cR$ on $S$ and construct a variety of hyperbolic Riemann surface structures $X$ on $S$ exhibiting various qualities, for which $\SHo(S)$ acts.  We describe essentially two different constructions.  The first, carried out in \Cref{S:general twisting construction}, involves twist deformations.  This construction produces examples where the intrinsic geometry of pieces are both bounded and unbounded, while still maintaining bounded pieces.  In particular, \Cref{thm:mainbddpieces} applies so that $\SHo(S)$ has finite index in $\Mod_0(X)$.

The second construction is described in \Cref{S:unbounded pieces}, and produces examples of structures $X$ with unbounded pieces, while $\SHo(S)$ still acts asymptotically conformally.  For the examples, \Cref{thm:mainbddpieces} does not apply, and we do not know whether $\SHo(S)$ has finite index in $\Mod_0(X)$; see \Cref{Q:unbounded pieces}.

All constructions have a similar setup which we now describe.  Start with a hyperbolic Riemann surface structure $X_0$ on $S$ that is $\cR$--compatible; we will shortly impose some additional constraints on $X_0$. (Recall that this means we have an implicit homeomorphism $S \cong X$.)  We will construct a new structure as a deformation $f \colon X_0 \to X$.  These are typically {\em not} quasiconformal deformations in our constructions.

Enumerate the ends $e_1,\ldots,e_n$ and choose compatible neighborhoods $U_j = Y_1^j \cup Y_2^j \cup \cdots$ for each $e_j$. For every $j=2,\ldots,n$, we choose a particular element of $\SHo(S)$ interchanging $U_1$ and $U_j$ by a rigid map on their union, and acting as the identity on all other $U_i$.  
Label this finite collection of elements of $\SHo(S)$ as $\Omega$. We will describe the new structure $f \colon X_0 \to X$ by describing it on $U=U_1$ and use the elements of $\Omega$ to push it forward to the other components (and then extend over the remaining compact part in any way we like).  We also write $Y_k = Y_k^1$ for all $k \geq 1$.

We fix an element $F \in \PSHo(S)$ that acts on $U$ by a rigid map sending $Y_k$ to $Y_{k+1}$, for all $k \geq 1$.  We additionally assume that $F$ is the identity on $U_j$ for all $j=3,\ldots,n$, and on $U_2$ we assume that $F$ restricts to the conjugate of $F^{-1}|_{U}$, where we conjugate by the chosen element in $\Omega \subset \SHo(S)$ that interchanged $U$ and $U_2$ above.
We note that since $U_1,\ldots,U_n \subset X_0$ are compatible, we can assume that $F$ is quasiconformal and asymptotically isometric. 

\begin{lemma} \label{L:just check F}
    Suppose that $f \colon X_0 \to X$ and $F \in \SHo(S)$ are as above. Then we have $\SHo(S) < \Mod_0(X)$ if and only if $F \in \Mod_0(X)$.  Similarly, for any $p \geq 1$, $\SHo(S) < \Mod^p(X)$ if and only if $F \in \Mod^p(X)$. 
\end{lemma}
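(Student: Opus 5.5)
The forward direction is immediate, since $F \in \SHo(S)$. For the converse, I will use the generating set for $\SHo(S)$ recalled in \Cref{ssec:shogrps}: $\SHo(S)$ is generated by $\Omega$, $F$, and the compactly supported mapping classes. Here $\Omega$ is any finite collection acting transitively on the ends. The elements of $\Omega$ chosen above (interchanging $U_1$ and $U_j$ rigidly, and fixing the other $U_i$) qualify. Since $\Mod_0(X)$ and $\Mod^p(X)$ are subgroups of $\Map(S)$ (\Cref{L:ACgroup}; for $L^p$, the composition fact quoted in the proof of \Cref{cor:components}), it suffices to show that each generator other than $F$ lies in them.

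\emph{Compactly supported classes.} Any compactly supported mapping class $h$ has a representative that is the identity outside a compact set $C$. It can be isotoped, rel the complement of a slightly larger compact set, to a diffeomorphism of $X$. That diffeomorphism is quasiconformal, and its Beltrami coefficient is supported on a compact set, so it is bounded. Hence it vanishes at infinity and is $p$--integrable, since the hyperbolic area of a compact set is finite.

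\emph{Elements of $\Omega$.} This is the main point, and it uses the way $X$ was built. On $U_j$ the structure $X$ was defined by pushing forward the structure on $U = U_1$ via the chosen $\omega_j \in \Omega$. Concretely, the deformation $f \colon X_0 \to X$ satisfies $f|_{U_j} = \omega_j' \circ f|_{U} \circ \omega_j^{-1}$, where $\omega_j'$ denotes the self-map of $X$ induced by $\omega_j$ on $f(U)$. This means that, in the $X$-structure, $\omega_j$ restricted to $f(U) \cup f(U_j)$ is (up to isotopy) an isometry: it maps $f(U)$ to $f(U_j)$ exactly by the transport map used to define the structure, and it maps $f(U_j)$ back to $f(U)$ by the inverse transport. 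On the remaining $f(U_i)$ it is the identity. So, outside the compact set $\overline{X \setminus \bigcup_i f(U_i)}$, $\omega_j$ is conformal. After isotoping it to a diffeomorphism on a compact piece, as in the proof of \Cref{lem:tight basics}, its Beltrami coefficient is compactly supported. Thus $\omega_j \in \Mod_0(X) \cap \Mod^p(X)$.

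The one thing that needs checking is that the push-forward is well defined as a rigid transport, i.e. that $\omega_j$ acts on $U \cup U_j$ as an involution rigidly. This holds by the choice of $\Omega$, after adjusting the rigid identification so that $\omega_j^2$ is the identity there. Combining the three classes of generators with $F \in \Mod_0(X)$ (respectively $\Mod^p(X)$) gives $\SHo(S) < \Mod_0(X)$ (respectively $\Mod^p(X)$), which completes the converse.
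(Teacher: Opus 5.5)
Your proof is correct and is essentially the paper's argument: you use the generating set consisting of $\Omega$, $F$, and the compactly supported classes, and you check that compactly supported classes and the elements of $\Omega$ lie in $\Mod_0(X)\cap\Mod^p(X)$. The elements of $\Omega$ act isometrically on the end neighborhoods by the push-forward construction of $X$, and you supply details the paper leaves implicit. Two small remarks: no adjustment is needed to make $\omega_j$ an involution on $U\cup U_j$, since rigid maps are determined by the markings; and mere transitivity of $\Omega$ would not guarantee generation in general, but here the transpositions $(1\,j)$ generate all permutations of the ends, so the generating set is valid.
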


\begin{proof}    
    Since $f:X_0 \rightarrow X$ is defined via pushing forward the hyperbolic structure on $U$ to each other choice of compatible neighborhood of the ends of $X_0$, we have that the set $\Omega \subset \SHo(S)$ is contained in both $\Mod_0(X)$ and $\Mod^p(X)$. Similarly, the set of compactly supported mapping classes is always contained in $\Mod_0(X)$ and $\Mod^{p}(X)$. The desired statement follows from the additional fact $\SHo(S)$ is generated by $F$, $\Omega$, and the set of compactly supported mapping classes. 
\end{proof}

Because the behavior of $F$ on $U_j$ for $j =2,\ldots,n$ is either the identity or conjugate to $F^{-1}|_U$ by an isometry for the structure $X$, the following is immediate.

\begin{lemma} \label{L:just check F|}
    With the assumptions above, $F \in \Mod_0(X)$ if and only if $F|_U \colon U \to F(U)$ is asymptotically conformal.  Likewise, $F \in \Mod^p(X)$ if and only if $\mu_{F|_U}$ is in $\mathcal L^{p,\infty}(U)$. \qed
\end{lemma}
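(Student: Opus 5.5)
The plan is to reduce both asymptotic conditions on $F$ to conditions on the single end-neighborhood $U$. The two ingredients are that the Beltrami coefficient of a quasiconformal map can be analyzed piecewise on a decomposition of $X$ into finitely many closed pieces with measure-zero overlap, and that the structure $X$ on each $U_j$ ($j\ge 2$) is by construction the pushforward of the structure on $U$ via the chosen elements of $\Omega$, which are therefore isometries $U\to U_j$ for $X$.

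First, fix a representative of $F$ and write $X = K \cup U \cup U_2 \cup \cdots \cup U_n$, where $K$ is the compact closure of the complement of the $U_j$. On $K$, after an isotopy supported near $K$ we may take $F$ smooth and quasiconformal. Then the restriction of $\mu_F$ to $K$ is bounded and compactly supported. Hence it contributes nothing to "vanishing at infinity" and has finite $L^p$ norm, since $K$ has finite hyperbolic area.

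Next treat the other neighborhoods. On $U_j$ with $j\ge 3$, $F$ is the identity, so $\mu_F=0$ there. On $U_2$, $F|_{U_2}=\omega\circ F^{-1}|_{U}\circ\omega^{-1}$, where $\omega\in\Omega$ is an isometry (hence conformal) for $X$ between $U$ and $U_2$. Conjugating by conformal maps preserves $|\mu|$ pointwise and preserves hyperbolic area, so $|\mu_{F|_{U_2}}|$ corresponds to $|\mu_{F^{-1}|_U}|$. Two standard facts then relate $F^{-1}$ to $F$ on $U$:
\begin{itemize}
\item By \Cref{L:ACgroup}, $F|_U$ is asymptotically conformal if and only if $F^{-1}|_{F(U)}$ is.
\item For the $L^p$ statement, $|\mu_{F^{-1}}\circ F|=|\mu_F|$, so $\int|\mu_{F^{-1}}|^p\,dA = \int |\mu_F|^p\, J_F\, dA$. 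Assuming $F$ is biLipschitz on $U$, the Jacobian $J_F$ is bounded above and below, and finiteness of the two integrals is equivalent.
\end{itemize}
The biLipschitz assumption is the point needing care. I would justify it by replacing $F|_U$ by a biLipschitz representative, for instance the Douady--Earle type representative of \Cref{thm:EMS}, or by using the quasiconformal invariance of $L^p$-integrability of Beltrami coefficients from \cite[Proposition 5.1]{Yanagishita2014}.

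Finally, assemble the pieces. Vanishing at infinity on $X$ is equivalent to vanishing at infinity on each of the finitely many closed pieces, and $\|\mu\|_p^p$ is the sum of the contributions from the pieces. So $F\in\Mod_0(X)$, respectively $F\in\Mod^p(X)$, holds if and only if the condition holds on $U$. The converse directions are immediate, since restricting a condition from $X$ to $U$ is automatic.

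The main obstacle is the $L^p$ comparison between $F$ and $F^{-1}$ on $U$, that is, controlling the Jacobian. This is why the statement is marked immediate: it rests on choosing a representative of $F$ that is biLipschitz.
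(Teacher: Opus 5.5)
Your reduction is the same as the paper's. Its whole justification is that $F$ is the identity on $U_j$ for $j\ge 3$ and, on $U_2$, is conjugate by an $X$--isometry to the inverse of $F|_U$. Your handling of the compact part and of the $\Mod_0$ statement via \Cref{L:ACgroup} is fine. Read the ``only if'' directions as the existence of a suitable representative, since membership in $\Mod_0(X)$ or $\Mod^p(X)$ is a property of the class.

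\textbf{The gap is in the $L^p$ half, at the step you flagged.} Your identity $\int_{F(U)}|\mu_{F^{-1}}|^p\,dA=\int_U|\mu_F|^pJ_F\,dA$ is correct, but your first remedy fails, for three reasons:
\begin{enumerate}
\item \Cref{thm:EMS} needs the class to lie in $\T_0(X)$. You do not know that here, since $p$--integrability of $\mu_{F|_U}$ gives no pointwise decay.
\item It concerns global classes on $X$, not the map $F|_U\colon U\to F(U)$.
\item Above all, it says nothing about $p$--integrability of the new Beltrami coefficient.
\end{enumerate}
Swapping $F|_U$ for a biLipschitz representative therefore throws away the hypothesis $\mu_{F|_U}\in\CL^{p,\infty}(U)$. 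You would need a representative that is simultaneously biLipschitz and $p$--integrable, which is a separate result about barycentric extensions that you have not cited. So the lemma does not simply ``rest on choosing a biLipschitz representative''.

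Your second suggestion is the right one, and it is what the paper itself uses in \Cref{cor:components}. By \cite[Proposition 5.1]{Yanagishita2014} (quoted there for $p\ge 2$), $R_f$ sends $[\mathrm{id}]$ to $[f^{-1}]$. Hence the inverse of a $p$--integrably quasiconformal class is again $p$--integrable, which is the $L^p$ analogue of \Cref{L:ACgroup}.

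However, that is a statement about classes of global maps, so you still have to:
\begin{enumerate}
\item globalize $F|_U$, e.g.\ via $f^\nu$ with $\nu=\mu_F\chi_U$, which differs from $F$ on $U$ by a conformal map;
\item invert in $\T^p$;
\item splice the resulting $p$--integrable representative back into $F$ near $e_2$ through $\omega$, changing the map only on a compact set.
\end{enumerate}

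Alternatively, you can avoid any change of representative in the one place the $L^p$ statement is used, \Cref{lem:twist examples}. There the given map $f\circ F\circ f^{-1}$ is an isometry off the collars $N_k'$, and on them it descends from a shear preserving $\Imag(z)$. So its hyperbolic Jacobian is identically $1$, and your change-of-variables computation closes immediately.
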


When we need to be more precise about how $F$ acts on $U$ in the structure $X$, we note that the action of $F$ on $X$ is given by $f \circ F \circ f^{-1}$, up to isotopy.

\subsection{General twisting construction} \label{S:general twisting construction}

Our first construction will involve both topological and numerical parameters that can be tuned to produce different features, as we explain below.
The topological parameter is an (infinite) multicurve
\[ \beta = \beta_1 \cup \beta_2 \cup \beta_3 \cup \ldots \subset U\]
with the property that 
 $F(\beta_k) = \beta_{k+1}$ for all $k \geq 1$
We note that since $U \subset X_0$ is compatible and $F$ is rigid on $U$, the restriction $F|_U \colon U \to U$ is an isometric embedding.  Consequently, the lengths of all $\beta_k$ are equal, and hence are bounded above (and below) by some constant.  We assume our structure $X_0$ is chosen so that the length of each $\beta_k$ is $1$. 

The new structure, $f \colon X_0 \to X$, is obtained by a twist deformation on $\beta$.  Specifically, given $\epsilon_k \in \R$, by an {\em $\epsilon_k$-twist} on $\beta_k$ we will mean the distance $\epsilon_k$ twist-deformation on $\beta_k$ to the right if $\epsilon_k > 0$, and the distance $|\epsilon_k|$ twist-deformation to the left if $\epsilon_k < 0$.  If $\epsilon_k = 0$, we perform no twisting.

To state the next lemma, we will need an associated sequence, which is the consecutive differences: $\delta_1 = \epsilon_1$, and $\delta_k = \epsilon_k - \epsilon_{k-1}$, for all $k \geq 2$.

\begin{lemma} \label{lem:twist examples}
    Fix $\beta$ and $\{\epsilon_k\}$, let $f \colon X_0 \to X$ be the associated deformation, and let $\{\delta_k\}$ be the associated sequence of differences, all as above.
    Then:
    \begin{enumerate}
    \item \label{item:QC} $f \colon X_0 \to X$ is quasi-conformal if and only if $\{\epsilon_k\}$ is bounded,
    \item \label{item:AC} $f \colon X_0 \to X$ is asymptotically conformal if and only if $\epsilon_k \to 0$ as $k \to \infty$,
    \item \label{item:Mod_0} $\SHo(S) < \Mod_0(X)$ if and only if $\delta_k \to 0$ as $k \to \infty$, and 
    \item \label{item:Mod^p} for $p \geq 1$, $\SHo(S) < \Mod^p(X)$ if $\displaystyle{\sum_k |\delta_k|^p < \infty}$. 
    \end{enumerate}
\end{lemma}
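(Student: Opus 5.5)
The strategy is to realize the twist deformation $f$ by an explicit local model in disjoint collars. I then read off (1) and (2) from the Beltrami coefficient of that model, with Wolpert's lemma and \Cref{cor:nearly isometric} giving the converse directions. Finally, (3) and (4) reduce via \Cref{L:just check F} and \Cref{L:just check F|} to computing the dilatation of $f\circ F\circ f^{-1}$ on $U$.

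\textbf{The model.} Since the $\beta_k$ all have length $1$ in $X_0$ and are permuted isometrically by $F$, they have disjoint standard collars $A_k$ of a fixed width $w>0$, all isometric. I will realize an $\epsilon$-twist on $\beta_k$ by the affine shear of $A_k$ (in Fermi coordinates), which interpolates from $0$ to $\epsilon$ across the collar. This map is $K(\epsilon)$-quasiconformal, with $|\mu|$ comparable to $\min(1,|\epsilon|)$ and $K(\epsilon)\to 1$ as $\epsilon\to0$. It is supported on $A_k$, so $f$ is conformal off $\bigcup A_k$, and its dilatation on $A_k$ depends only on $\epsilon_k$. This gives the ``if'' directions of (1) and (2) immediately, since $f$ is then $\sup_k K(\epsilon_k)$-quasiconformal, and $f|_{A_k}$ becomes $(1+o(1))$-quasiconformal as $k\to\infty$.

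\textbf{Converses of (1) and (2).} For each $k$ I pick a curve $\gamma_k$ crossing $\beta_k$ (e.g.\ $\gamma_k=F^{k-1}(\gamma_1)$ meeting $\beta_k$ once or twice and disjoint from the other $\beta_j$). Then $\ell_{X_0}(\gamma_k)$ is constant, while $\ell_X(\gamma_k)$ is a function of $\epsilon_k$ alone that is proper in $\epsilon_k$ and strictly minimized only at $\epsilon_k=0$ (after possibly choosing $\gamma_k$ appropriately), by standard twist-length convexity. If $\{\epsilon_k\}$ is unbounded, \Cref{lem:wolpert} is violated, since the ratio $\ell_X(\gamma_k)/\ell_{X_0}(\gamma_k)$ is unbounded. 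If $\epsilon_k\not\to0$ along a subsequence, then no $f$ in the class can be asymptotically conformal, by \Cref{cor:nearly isometric} applied to $\gamma_k$, which eventually leave every compact set.

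\textbf{Parts (3) and (4).} Here $F$ acts on $X$ as $f\circ F\circ f^{-1}$. On $U$, the map $F$ is an isometry of $X_0$ carrying $A_k$ to $A_{k+1}$, so $f\circ F\circ f^{-1}$ restricted to $f(A_{k+1})$ is, up to isotopy, the shear removing an $\epsilon_k$-twist and adding an $\epsilon_{k+1}$-twist. That is, it is a $\delta_{k+1}$-twist shear, hence conformal off the collars and with $|\mu|\asymp\min(1,|\delta_{k+1}|)$ on a collar of bounded area. Applying the converse argument of (2) to the same curves $\gamma_k$ shows that asymptotic conformality forces $\delta_k\to0$; by \Cref{L:just check F|}, (3) follows. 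For (4), $\|\mu\|_p^p\asymp\sum_k|\delta_k|^p\,\mathrm{Area}(A)$, which is finite under the hypothesis, and the collars have uniformly bounded area, so (4) also follows from \Cref{L:just check F|}.

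The main obstacle is making rigorous that the shear model in $X_0$-collars really represents $f\circ F\circ f^{-1}$ with the stated dilatation, i.e.\ controlling the change of hyperbolic metric between $X_0$ and $X$. I would avoid the issue by computing Beltrami coefficients relative to the pulled-back conformal structure. Since $X_0\to X$ is itself built from shears, the composite $f\circ F\circ f^{-1}$ is conformal to an explicit shear via the chain rule for Beltrami coefficients, with no appeal to hyperbolic metrics beyond collar areas.
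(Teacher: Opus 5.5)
Your construction of $f$ (an affine shear in disjoint standard collars, an isometry elsewhere) matches the paper's proof. So do your computation that $f\circ F\circ f^{-1}$ is a $\delta_{k+1}$-shear on each collar, the $L^p$ sum, and the converses of (1) and (2) via transverse curves, \Cref{lem:wolpert} and \Cref{cor:nearly isometric}. The metric worry in your last paragraph is not a real obstacle, since $f$ is an isometry off the collars and $f(A_k)$ is a standard collar of a length-one geodesic of $X$.

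The genuine gap is the converse of (3). Write $L$ for the twist-length function of $\gamma_1$, so that $\ell_X(f(\gamma_k))=L(\epsilon_k)$. With the same curves, asymptotic conformality of $f\circ F\circ f^{-1}$ only tells you $L(\epsilon_{k+1})/L(\epsilon_k)\to 1$. That does not force $\delta_k\to 0$ once $\epsilon_k$ is unbounded, which is exactly the regime the paper cares about.

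For example, take $\epsilon_k=k$. Then $\delta_k\equiv 1$. Here $X$ is just $X_0$ re-marked by $\prod_k T_{\beta_k}^{k}$, so $F$ acts on $U$ as an isometric conjugate of the multitwist $\prod_k T_{\beta_k}$ composed with $F$, which is not asymptotically conformal. Yet $|L(\epsilon+1)-L(\epsilon)|\le i(\gamma_1,\beta_1)$ and $L\to\infty$, so $L(k+1)/L(k)\to 1$. Your test is passed and gives no contradiction.

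The fix is to use test curves that are short in $X$; the paper's ``appropriate transverse curves to $f(\beta_k)$'' points to this.
\begin{itemize}
\item Choose $m_k\in\mathbb{Z}$ so that $\alpha_k=f(T_{\beta_k}^{m_k}\gamma_k)$ has $\ell_X(\alpha_k)=L(a_k)$ with $a_k\in[0,1)$ (sign conventions aside).
\item Then $fFf^{-1}(\alpha_k)=f(T_{\beta_{k+1}}^{m_k}\gamma_{k+1})$ has length $L(a_k+\delta_{k+1})$.
\item Using $m_k+1$ instead, you similarly compare $L(a_k+1)$ with $L(a_k+1+\delta_{k+1})$.
\item If $F\in\Mod_0(X)$, \Cref{cor:nearly isometric} sends both ratios to $1$, and properness of $L$ bounds $\delta_{k+1}$.
\item Along a subsequence with $a_k\to a$ and $\delta_{k+1}\to d$, you get $L(a+d)=L(a)$ and $L(a+1+d)=L(a+1)$.
\item This is impossible for $d\neq 0$, because strict convexity makes $x\mapsto L(x+d)-L(x)$ strictly monotone. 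Hence $\delta_k\to 0$.
\end{itemize}

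The same two-curve device is what your converse of (2) actually needs. You generally cannot choose $\gamma_k$ so that $L$ is minimized exactly at $0$, because Dehn twisting only shifts the minimizer by integers. What is true, and suffices, is this: for a subsequential limit $\epsilon^*\neq 0$, at least one of $\gamma_k$ or $T_{\beta_k}\gamma_k$ has a length function taking different values at $0$ and at $\epsilon^*$.
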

\begin{proof} 
    We assume that each $\beta_k$ is realized by its geodesic representative in $X_0$.  Since the lengths of all $\beta_k$ are equal to $1$, there is some $C>0$ so that the $C$--neighborhood of $\beta$ is the union of pairwise disjoint annuli; we write $N_k$ for the $C$--neighborhood of $\beta_k$.  It is a straightforward computation to show that if $\{\epsilon_k\}$ is bounded (respectively, $\epsilon_k \to 0$), then $f$ is quasi-conformal (respectively, is asymptotically conformal). This proves half of items \eqref{item:QC} and \eqref{item:AC}. 
    
    Next, because $F(\beta_k) = \beta_{k+1}$ and $F$ is rigid on $U$, it follows that there is a sequence of curves $\alpha_k$ so that $F(\alpha_k) = \alpha_{k+1}$ and $i(\alpha_k,\beta_j) = 0$ for all $j$ and all $k \neq j$ sufficiently large, and $i(\alpha_k,\beta_k) \neq 0$.  Since $F|_U$ is an isometric embedding with respect to $X_0$, the lengths of $\alpha_k$ in $X_0$ are all equal for all $k$ sufficiently large.  If $\{\epsilon_k\}$ is unbounded, then there exists a subsequence of $\{\alpha_k\}$ (corresponding to a subsequence of $\{\epsilon_k\}$ that tends to $\pm \infty$) for which $\ell_X(\alpha_k) \to \infty$, and hence by \Cref{lem:wolpert}, $f$ is not quasiconformal.
    Similarly, if $\epsilon_k$ does not converge to zero, then (after replacing each $\alpha_k$ with its image by a Dehn twist in $\beta_k$ if necessary), there is a subsequence so that $\ell_{X_0}(\alpha_k)/\ell_X(\alpha_k)$ does not limit to $0$, and hence $f$ is not asymptotically conformal by \Cref{cor:nearly isometric}.  This proves the required reverse implications in items \eqref{item:QC} and \eqref{item:AC}.

    Next we consider the sufficiency of the condition on $\{\delta_k\}$ in items \eqref{item:Mod_0} and \eqref{item:Mod^p} simultaneously.  First, recall that by \Cref{L:just check F}, it suffices to show that $F \in \Mod_0(X)$ and $F \in \Mod^p(X)$, respectively, under the given hypotheses.  Furthermore, this containment is entirely dictated by the behavior of $F|_U$, according to \Cref{L:just check F|}.

    The pairwise disjoint collar neighborhoods $\{N_k\}$ of the respective $\{\beta_k\}$ are isometric to each other, and are isometric to the quotient of a $C$--neighborhood of a geodesic in $\hyp$.  We explicitly describe this in the {\em strip model} of $\hyp$, given by
\[ \left( \{z \in \mathbb C \mid 0 < \Imag(z) < \pi \},\frac{|dz|}{\sin(\Imag(z))} \right). \]
In this model, we assume that the geodesic of interest is the one defined by $\Imag(z) = \frac{\pi}2$, and thus there exists $\tau > 0$ so that each $N_k$ is isometric to the hyperbolic annulus $N = V/(z \mapsto z+1)$, where
\[ V = \{z \in \mathbb C \mid \tfrac{\pi-\tau}2 \leq \Imag(z) \leq \tfrac{\pi + \tau}2 \}.\]
We assume the isometries $\sigma_k \colon N \to N_k$ are chosen so that $\sigma_{k+1}^{-1} \circ F|_{N_k} \circ \sigma_k$ is the identity on $N$.

Next, we choose the homeomorphism $f \colon X_0 \to X$ so that it is an isometry on $\displaystyle{U - \bigcup_k N_k}$, and sends each $N_k$ to a collar $N_k'=f(N_k)$ of the image geodesic $f(\beta_k)$.  We note that for each $k \geq 1$, $N_k'$ is also isometric to $N$.  

Next, we choose $f|_{N_k} \colon N_k \to N_k'$ and isometries $\sigma_k' \colon N \to N_k'$ so that for each $k$ the composition,
\[(\sigma_k')^{-1} \circ f \circ \sigma_k \]
lifts to a map $V \to V$ which is the restriction of an affine map with derivative
\[ \left( \begin{array}{cc} 1 & \epsilon_k/\tau \\ 0 & 1\\
\end{array} \right).\]
In fact, the $\mathbb R$--linear map defined by this matrix sends $V$ to itself, and descends to a map $N \to N$ which affects an $\epsilon_k$--twist on the core geodesic, hence can be used to define the required $\sigma_k'$ and $f|_{N_k}$.

By composing the maps above, we can compute that
\[ (\sigma_{k+1}')^{-1} \circ (f \circ F \circ f^{-1})|_{N_k'} \circ \sigma_k',\]
also lifts to a map $V \to V$ which is the restriction of an affine map.  This time, the derivative is
\[ \left( \begin{array}{cc} 1 & \delta_k/\tau \\ 0 & 1\\
\end{array} \right).\]
since $\delta_k$ measures the difference in twisting between $\beta_k$ and $\beta_{k+1}$.
If $\nu$ is the Beltrami coefficient of $f \circ F \circ f^{-1}$, then we can bound the absolute value of its restriction $\nu_k = \nu_{F|_{N_k'}}$ to $N_k'$ using this affine map,
\[ |\nu|_k| \leq |\delta_k/\tau|.\]
Since $F|_U$ is conformal outside $\bigcup_k N_k'$, it follows that if $\delta_k \to 0$ as $k \to \infty$, then $F$ is asymptotically conformal, proving item \eqref{item:Mod_0}.

If we let $A$ denote the hyperbolic area of $N$ (and hence of each $N_k'$), we have the following estimate
\begin{eqnarray*}
\int_U |\nu|^p dA_{\hyp}  & = & \sum_{k=1}^\infty \int_{N_k} |\nu|^p dA_{\hyp} \\
& \leq & \sum_{k=1}^\infty |\delta_k|^pA/\tau^p\\
& = & A/\tau^p\sum_{k=1}^\infty |\delta_k|^p.
\end{eqnarray*}
The finiteness of this final sum is precisely the hypothesis in item \eqref{item:Mod^p}, proving that statement as well.

Finally, the reverse implication of \eqref{item:Mod_0} follows from a similar argument to the reverse implication of \eqref{item:AC}, by considering appropriate transverse curves $\alpha_k$ to $f(\beta_k)$ and the ratio of their lengths to their images by $f \circ F \circ f^{-1}$, then appealing to \Cref{lem:wolpert}.
\end{proof}

Suppose that the multicurve $\beta$ is the union of gluing curves between consecutive pieces, $Y_k$ and $Y_{k+1}$, for all $k \geq 1$.  See Figure~\ref{Fig:Different betas}, surface I, and let $X$ be the associated hyperbolic surface for any choice of $\{ \epsilon_k \}$.  In this case, each of the pieces have bounded geometry, independent of the choice of $\{\epsilon_k\}$.  In fact, $F|_{Y_k}: Y_k \to Y_{k+1}$ is isotopic to an isometry.  At first glance it may seem that this means that $X$ is compatible.  However, the isotopy is not fixed on the boundary, and consequently after the isotopy, the map from $Y_k$ to $Y_{k+1}$ is likely {\em not} rigid, i.e.~given by $\phi_{k+1}^{-1} \circ \phi_k$ as in \Cref{ssec:shogrps}.
On the other hand, \Cref{lem:twist examples} \eqref{item:Mod_0} and \eqref{item:Mod^p} ensure that $\SHo(S) < \Mod_0(X)$ or $\SHo(S) < \Mod^p(X)$, respectively, under the stated hypotheses on $\{\delta_k\}$.

Next, consider a multicurve $\beta$ so that $\beta_k$ is contained in $Y_k$ for all $k$, and $F(\beta_k) = \beta_{k+1}$.  See \Cref{Fig:Different betas}, surface II.  In this case, the hyperbolic structures on the pieces still have bounded geometry, but $F$ is no longer an isometry from $Y_k$ to $Y_{k+1}$, and if $\epsilon_k \to \infty$, while $\delta_k \to 0$, one obtains structures $X$ which are not quasiconformal deformations of $X_0$, but for which $\SHo(S) < \Mod_0(X)$.  If we let $\epsilon_k = \sum_{j=1}^k 1/j$, then $\delta_k = 1/k$, and hence $\SHo(S) < \Mod^p(X)$ for all $p > 1$, yet $X$ is not a quasiconformal deformation of $X_0$.

Finally, we consider a multicurve $\beta$ for which $\beta_k$ intersects both $Y_k$ and $Y_{k+1}$ nontrivially.  See \Cref{Fig:Different betas}, surface III.  Choosing $\epsilon_k \to \infty$ while $\delta_k \to 0$ (or $\delta_k$ $p$--summable for some $p$) gives examples with bounded pieces, but with {\em unbounded} geometry for which $\SHo(S) < \Mod_0(X)$ (respectively, $\SHo(S) < \Mod^p(X)$); indeed, the boundary of $Y_k$ in $X$ will have length tending to infinity with $k$.

\def\arr{-{Stealth[length=3mm, width=2mm]}}
\begin{figure}[htb]
\begin{center}
\begin{tikzpicture}
    \node at (0,0) {\includegraphics[width=9cm]{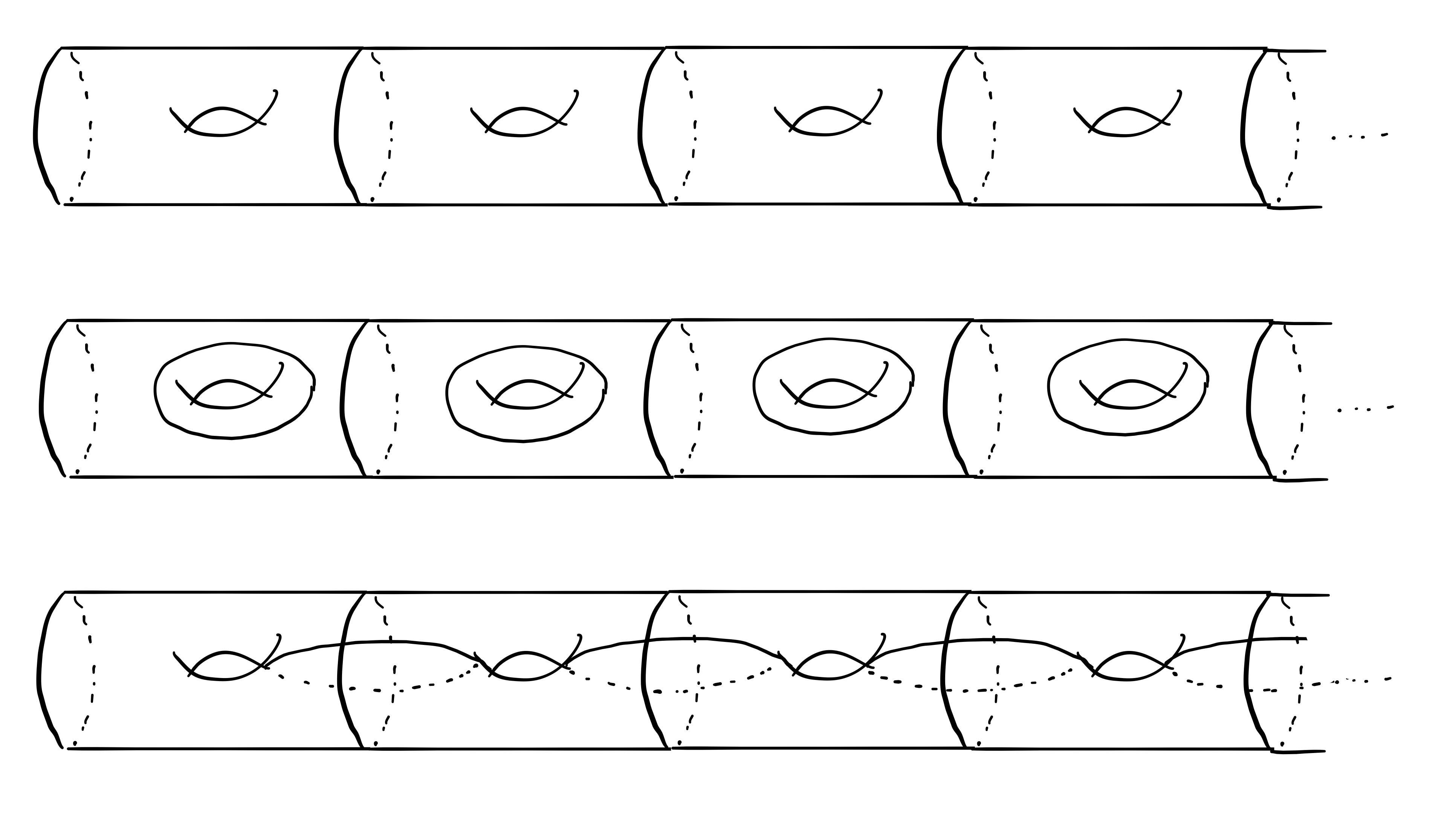}};
    \node at (-2.2,2.55) {\small $\beta_1$};
    \node at (-.3,2.55) {\small $\beta_2$};
    \node at (1.5,2.55) {\small $\beta_3$};
    \node at (3.3,2.55) {\small $\beta_4$};
    \node at (-3.6,-.1) {\small $\beta_1$};
    \node at (-1.85,-.1) {\small $\beta_2$};
    \node at (.1,-.1) {\small $\beta_3$};
    \node at (1.9,-.1) {\small $\beta_4$};
    \node at (-2.8,-1.8) {\small $\beta_1$};
    \node at (-.9,-1.8) {\small $\beta_2$};
    \node at (1,-1.8) {\small $\beta_3$};
    \node at (2.9,-1.8) {\small $\beta_4$};
    \node at (-5,1.8) {\Large I.};
    \node at (-5,.1) {\Large II.};
    \node at (-5,-1.6) {\Large III.};
\end{tikzpicture}
\caption{Three examples of the multicurve $\beta$, producing different behaviors.} \label{Fig:Different betas}
\end{center}
\end{figure}

\subsection{Unbounded pieces}\label{S:unbounded pieces}

For our second construction, we build a hyperbolic structure on $U$ directly and show that $F|_U \colon U \to U$ is asymptotically conformal (rather than viewing it as a deformation of a fixed structure $X_0$).

The basic building blocks for this construction are topologically annuli, geometrically realized as hyperbolic surfaces with geodesic boundary and corners; See \Cref{Fig:funny annulus}.  Specifically, for any $.9 < t \leq 1$, we consider the annulus $A_t$ with one boundary component a closed geodesic, and the other a concatenation of four geodesic segments meeting at right angles.  The four segments have lengths $2,2,2,2t$. That such a hyperbolic structure exists follows from explicit computation using hyperbolic trigonometry.

\begin{figure}
\begin{center}
\begin{tikzpicture}
\begin{scope}[scale = 1.2] 
    \filldraw[opacity = .2] (0,0) -- (2,0) -- (2,2) -- (0,2) -- (0,0);
    \draw[thick] (0,0) -- (2,0) -- (2,2) -- (0,2) -- (0,0);
    \filldraw[white] (1,1) circle (14pt);
    \draw[thick] (.2,0) -- (.2,.2) -- (0,.2);
    \draw[thick] (1.8,2) -- (1.8,1.8) -- (2,1.8);
    \draw[thick] (0,1.8) -- (.2,1.8) -- (.2,2);
    \draw[thick] (1.8,0) -- (1.8,.2) -- (2,.2);
    \draw[thick] (1,1) circle (14pt);
    \node at (1,-.2) {$2t$};
    \node at (1,2.2) {$2$};
    \node at (-.2,1) {$2$};
    \node at (2.2,1) {$2$};
\end{scope}
\end{tikzpicture}
\caption{The basic building block} \label{Fig:funny annulus}
\end{center}
\end{figure}

Next, we consider a sequence $\{t_k\}_{k=1}^\infty$ with $.9 < t_k \leq 1$ and $t_k \to 1$ as $k \to \infty$.  We construct a hyperbolic structure on $U$ from the sequence $\{A_{t_k}\}_{k=1}^\infty$ by first gluing the ``right side" of $A_{t_k}$ to the ``left side" of $A_{t_{k+1}}$ for all $k \geq 1$, resulting in a hyperbolic surface-with-corners which is {\em homeomorphic} to a half-infinite strip, $[0,\infty) \times [0,1]$, minus infinitely many disks of Euclidean radius $\frac14$ centered on $\N \times \{\frac12\}$; see \Cref{Fig:gluing funny annuli}.  We next choose a sequence of hyperbolic one-holed tori with geodesic boundary, $\{\Sigma_k\}$ whose hyperbolic structures converge in $\T(S_{1,1})$ and whose lengths match the lengths of the geodesic boundary components.  Finally, we glue the top of the strip to the bottom by isometry to produce a hyperbolic surface $W$ with infinitely many closed geodesic boundary components, and produce $U$ by gluing $\Sigma_k$ to the $k^{th}$ boundary component of $W$ by isometry (the $0^{th}$ boundary component remains exposed); See \Cref{Fig:Unbounded pieces}.

\begin{figure}
\begin{center}
\begin{tikzpicture}
\begin{scope}[scale = 1] 
    \filldraw[opacity = .2] (0,0) -- (2,0) -- (2,2) -- (0,2) -- (0,0);
    \draw[thick] (0,0) -- (2,0) -- (2,2) -- (0,2) -- (0,0);
    \filldraw[white] (1,1) circle (14pt);
    \draw[thick] (.2,0) -- (.2,.2) -- (0,.2);
    \draw[thick] (1.8,2) -- (1.8,1.8) -- (2,1.8);
    \draw[thick] (0,1.8) -- (.2,1.8) -- (.2,2);
    \draw[thick] (1.8,0) -- (1.8,.2) -- (2,.2);
    \draw[thick] (1,1) circle (14pt);
    \node at (1,-.25) {$2t_1$};
    \node at (1,2.25) {$2$};
    \node at (-.2,1) {$2$};
\end{scope}
\begin{scope}[shift = {(2,0)}] 
    \filldraw[opacity = .2] (0,0) -- (2,0) -- (2,2) -- (0,2) -- (0,0);
    \draw[thick] (0,0) -- (2,0) -- (2,2) -- (0,2) -- (0,0);
    \filldraw[white] (1,1) circle (14pt);
    \draw[thick] (.2,0) -- (.2,.2) -- (0,.2);
    \draw[thick] (1.8,2) -- (1.8,1.8) -- (2,1.8);
    \draw[thick] (0,1.8) -- (.2,1.8) -- (.2,2);
    \draw[thick] (1.8,0) -- (1.8,.2) -- (2,.2);
    \draw[thick] (1,1) circle (14pt);
    \node at (1,-.25) {$2t_2$};
    \node at (1,2.25) {$2$};
\end{scope}
\begin{scope}[shift = {(4,0)}] 
    \filldraw[opacity = .2] (0,0) -- (2,0) -- (2,2) -- (0,2) -- (0,0);
    \draw[thick] (0,0) -- (2,0) -- (2,2) -- (0,2) -- (0,0);
    \filldraw[white] (1,1) circle (14pt);
    \draw[thick] (.2,0) -- (.2,.2) -- (0,.2);
    \draw[thick] (1.8,2) -- (1.8,1.8) -- (2,1.8);
    \draw[thick] (0,1.8) -- (.2,1.8) -- (.2,2);
    \draw[thick] (1.8,0) -- (1.8,.2) -- (2,.2);
    \draw[thick] (1,1) circle (14pt);
    \node at (1,-.25) {$2t_3$};
    \node at (1,2.25) {$2$};
\end{scope}
\begin{scope}[shift = {(6,0)}] 
    \filldraw[opacity = .2] (0,0) -- (2,0) -- (2,2) -- (0,2) -- (0,0);
    \draw[thick] (0,0) -- (2,0) -- (2,2) -- (0,2) -- (0,0);
    \filldraw[white] (1,1) circle (14pt);
    \draw[thick] (.2,0) -- (.2,.2) -- (0,.2);
    \draw[thick] (1.8,2) -- (1.8,1.8) -- (2,1.8);
    \draw[thick] (0,1.8) -- (.2,1.8) -- (.2,2);
    \draw[thick] (1.8,0) -- (1.8,.2) -- (2,.2);
    \draw[thick] (1,1) circle (14pt);
    \node at (1,-.25) {$2t_4$};
    \node at (1,2.25) {$2$};
\end{scope}
\begin{scope}[shift = {(8,0)}] 
    \filldraw[opacity = .2] (0,0) -- (2.2,0) -- (2.2,2) -- (0,2) -- (0,0);
    \draw[thick] (0,0) -- (2,0) -- (2,2) -- (0,2) -- (0,0);
    \draw[thick] (2,0) -- (2.2,0);
    \draw[thick] (2,2) -- (2.2,2);
    \filldraw[white] (1,1) circle (14pt);
    \draw[thick] (.2,0) -- (.2,.2) -- (0,.2);
    \draw[thick] (1.8,2) -- (1.8,1.8) -- (2,1.8);
    \draw[thick] (0,1.8) -- (.2,1.8) -- (.2,2);
    \draw[thick] (1.8,0) -- (1.8,.2) -- (2,.2);
    \draw[thick] (1,1) circle (14pt);
    \node at (1,-.25) {$2t_5$};
    \node at (1,2.25) {$2$};
\end{scope}
\node at (11,1) {\large $\cdots \cdots \cdots $};
\end{tikzpicture}
\caption{Hyperbolic structure on $U$.} \label{Fig:gluing funny annuli}
\end{center}
\end{figure}

\begin{figure}[htb]
\begin{center}
\begin{tikzpicture}
    \node at (0,0) {\includegraphics[width=12cm]{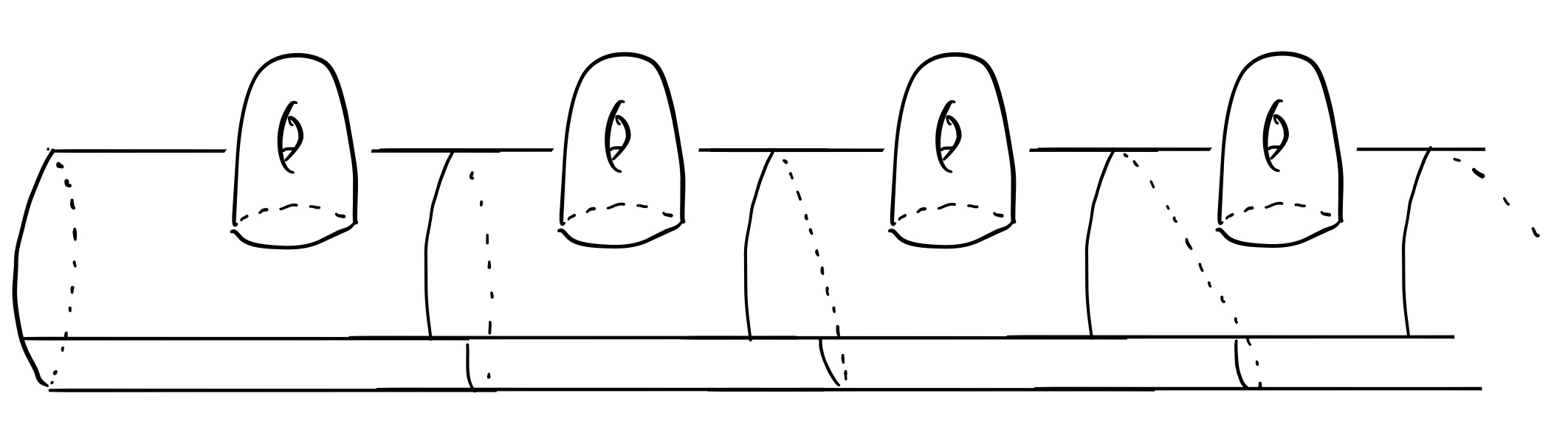}};
    \node at (-4.5,1) {$\Sigma_1$};
    \node at (-2,1) {$\Sigma_2$};
    \node at (.6,1) {$\Sigma_3$};
    \node at (3,1) {$\Sigma_4$};
    \node at (-5,-.5) {$A_{t_1}$};
    \node at (-2,-.5) {$A_{t_2}$};
    \node at (.3,-.5) {$A_{t_3}$};
    \node at (3,-.5) {$A_{t_4}$};
\end{tikzpicture}
\caption{An example with unbounded pieces.} \label{Fig:Unbounded pieces}
\end{center}
\end{figure}

\begin{proposition} \label{prop:example unbounded pieces}
    Given $\{t_k\} \in (.9,1]$ with $t_k \to 1$ and $\{\Sigma_k\}$ as above, we let $X$ be the hyperbolic structure whose ends have neighborhoods that are isometric to surfaces as above.    If the gluings of $\Sigma_k$ to $W$ are chosen appropriately, then $\SHo(S) < \Mod_0(X)$.  Furthermore, if 
    \[\sum_{k=1}^\infty (1-t_k) = \infty,\] then $X$ does not have bounded pieces.
\end{proposition}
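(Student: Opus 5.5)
Two claims need proof: $\SHo(S)<\Mod_0(X)$, and unboundedness of pieces when $\sum(1-t_k)=\infty$.

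\emph{Containment.} By \Cref{L:just check F} and \Cref{L:just check F|}, it suffices to show that $F|_U\colon U\to F(U)$ is asymptotically conformal, where $F$ sends the $k$-th building block to the $(k+1)$-st. I would construct an explicit representative of $F$ block by block.

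First, on each annulus $A_{t_k}$, build a map $h_k\colon A_{t_k}\to A_{t_{k+1}}$ that is $K_k$-biLipschitz with $K_k\to 1$. This follows from continuity in $t$ of the hyperbolic structure with corners: the right-angled geodesic polygon structures vary real-analytically in $t$, so they can be identified by a fixed topological model via maps whose biLipschitz constants tend to $1$ as $|t_k-t_{k+1}|\to 0$, which holds since $t_k\to1$. We must also arrange that consecutive $h_k$ agree on the shared vertical sides of length $2$. Since those sides have equal length in all blocks, take $h_k$ to be the isometric identification (in arclength) on the left and right sides and on the top side of length $2$. On the bottom side, use linear rescaling by $t_{k+1}/t_k$. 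On the inner geodesic boundary, use linear rescaling of arclength; these lengths $c(t_k)$ converge, so the ratios tend to $1$. Then extend over the interior, for instance via a Douady--Earle type extension or a piecewise-smooth interpolation in Fermi coordinates, with dilatation controlled by the boundary data and the geometry.

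Second, the top--bottom gluing is compatible because after the gluing the top and bottom become a single arc in $W$, and the maps on it are $K_k$-close to isometries. This is where the choice of gluing matters, so I would glue top to bottom by arclength rescaling.

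Third, handle the tori. Since $\Sigma_k$ converge in $\T(S_{1,1})$ and their boundary lengths match those of $A_{t_k}$, choose the gluings of $\Sigma_k$ to $W$ (this is the ``appropriate'' choice, fixing twist parameters) so that there are markings of the $\Sigma_k$ with $K'_k$-quasiconformal maps $\Sigma_k\to\Sigma_{k+1}$, $K'_k\to1$, restricting on $\partial\Sigma_k$ to the linear rescaling already chosen on the inner boundary of $A_{t_k}$. These exist by convergence of the structures together with a Fenchel--Nielsen twist normalization.

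Assembling the pieces gives a representative of $F|_U$ whose dilatation on the $k$-th block is at most $\max(K_k,K'_k)\to1$, so it is asymptotically conformal. I expect the \textbf{main obstacle} to be making the boundary-compatible, uniformly controlled extensions rigorous, especially near the right-angled corners.

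\emph{Unbounded pieces.} Take two points $p_1$ and $p_m$ on the left sides of blocks $1$ and $m$. The horizontal path along the bottom of the blocks has length $2\sum_k t_k$, whereas along the top it has length $2(m-1)$. I would show that the distance between $p_1$ and $p_m$ in $X$ grows linearly in $m$, using the fact that every path crosses each block and so pays roughly $2t_k\ge 1.8$. A piece contains a fixed number of blocks only if the rigid structure is used. Instead, unboundedness must come from the rigid markings. If pieces are formed as blocks, $F$ acts rigidly, yet the markings $\phi_k$ differ from isometries by a total shear of $\sum(1-t_k)$, because the top and bottom lengths differ by $2(1-t_k)$ in block $k$. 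Tracking the marked curve crossing from top to bottom, the mismatch accumulates to $\sum_{j\le k}2(1-t_j)\to\infty$: the glued curve winds around the strip. Then the piece $Y_k=\phi_k^{-1}(P)$, which contains a fixed arc of $P$, has an arc whose geodesic representative twists unboundedly many times around the strip. I would conclude that the diameter of $Y_k$ in $X$ is unbounded, via a twist-length estimate in the spirit of \Cref{lem:bddpiecesbddgeometry}, since bounded pieces would force bounded lengths of a filling curve by \Cref{lem:curvecounting}.
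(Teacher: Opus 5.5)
\textbf{There is a genuine gap along the glued ray, and your proposed way around it changes the surface.} Your treatment away from the seam matches the paper: maps $A_{t_k}\to A_{t_{k+1}}$ with dilatation tending to $1$ because $A_{t_k}\to A_1$, and gluings of the $\Sigma_k$ chosen so that the maps $\Sigma_k\to\Sigma_{k+1}$ agree with the rescaling on $\partial\Sigma_k$.

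In $W$ the top of the strip is glued to the bottom \emph{by isometry}. So a seam point at arclength $x$ lies both on the top side of some block $k$, with $x\in[2(k-1),2k]$, and on the bottom side of some block $j$, with $x\in[s_{j-1},s_j]$, where $s_j=2(t_1+\cdots+t_j)$. Your $h_k$ sends this point to $x+2$. Your $h_j$ sends it to $s_j+\frac{t_{j+1}}{t_j}(x-s_{j-1})$. At $x=s_{j-1}$ these differ by $2(1-t_j)$, which is nonzero whenever $t_j<1$. So the block maps do not assemble into a homeomorphism of $W$, and saying each is close to an isometry does not remove a pointwise disagreement. This seam, not the right-angled corners, is the real obstacle.

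Regluing top to bottom by arclength rescaling is not available to you, for three reasons:
\begin{itemize}
\item Only the gluings of the $\Sigma_k$ are at your disposal.
\item A non-isometric gluing of geodesic sides does not produce the hyperbolic surface in the statement.
\item It matches the corners $2k$ and $s_k$, so each block closes up on itself and the drift $2\sum_{j\le k}(1-t_j)$ disappears. Your own second paragraph relies on that drift.
\end{itemize}
As written, the two halves of your proposal concern different surfaces.

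\textbf{The missing idea is to separate the two sides of the seam with a collar.} Take a standard neighborhood $V$ of the glued ray, isometric to the half-strip $V_0=\{\frac{\pi-\tau}{2}\le \Im z\le \frac{\pi+\tau}{2},\ \mathrm{Re}\,z\ge 0\}$ in the strip model. Define $F$ block by block only as maps $A_{t_k}\setminus V\to A_{t_{k+1}}\setminus V$. The two boundary lines of $V_0$ then carry independent prescriptions:
\begin{itemize}
\item translation by $2$ on the line adjacent to the length-$2$ sides;
\item the piecewise linear map $[s_{k-1},s_k]\to[s_k,s_{k+1}]$ of slope $t_{k+1}/t_k$ on the line adjacent to the length-$2t_k$ sides.
\end{itemize}
Extend across $V_0$ by interpolating linearly in the vertical coordinate between these two maps. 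The derivative is upper triangular with diagonal entries $a(z)$ and $1$ and off-diagonal entry $b(z)$. As $\mathrm{Re}\,z\to\infty$ we get $a(z)\to 1$ and $b(z)\to 0$. This holds because $t_{k+1}/t_k\to 1$, and the two boundary maps differ by at most $2\max(1-t_k,1-t_{k+1})$ on $[s_{k-1},s_k]$. Hence $\mu_F\to 0$, and the drift is absorbed inside $V$ rather than eliminated.

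\textbf{On the second claim,} the accumulated drift is exactly the mechanism the paper invokes, so you found the right idea. The execution needs tightening:
\begin{itemize}
\item The growth of $d(p_1,p_m)$ is irrelevant.
\item The shear is a displacement along the ray, not a twisting around it. A curve cutting off the first $k$ tori must travel a distance comparable to $2\sum_{j<k}(1-t_j)$ along the seam before closing up, and this is what forces the piece diameters to grow.
\item Citing \Cref{lem:curvecounting} gives no contradiction by itself, since a long piece boundary is compatible with a short filling curve.
\end{itemize}
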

\begin{proof}
    Choose a neighborhood $V$ of the ``glued ray" that is standard in the strip model, and construct $F|_U$ so that it preserves $V$ and the restriction to $U \setminus V$ sends $A_{t_k} \setminus V$ to $A_{t_{k+1}} \setminus V$.

\begin{figure}[h]
\begin{center}
\begin{tikzpicture}
\begin{scope}[scale = 1] 
    \filldraw[opacity = .2] (0,0) -- (2,0) -- (2,2) -- (0,2) -- (0,0);
    \filldraw[opacity = .15] (0,0) -- (2,0) -- (2,.3) -- (0,.3) -- (0,0);
    \filldraw[opacity = .15] (0,1.7) -- (2,1.7) -- (2,2) -- (0,2) -- (0,1.7);
    \draw[thick] (0,0) -- (2,0) -- (2,2) -- (0,2) -- (0,0);
    \filldraw[white] (1,1) circle (14pt);
    \draw[thick, - stealth] (1.8,1.3) .. controls (1.95,1.4) and (2.05,1.4) .. (2.2,1.3);
    \draw[thick] (1,1) circle (14pt);
    \node at (-.7,1) {$V$};
    \draw[thick,- stealth] (-.5,1.1) -- (-.05,1.85);
    \draw[thick,- stealth] (-.5,.9) -- (-.05,.15);
\end{scope}
\begin{scope}[shift = {(2,0)}] 
    \filldraw[opacity = .2] (0,0) -- (2,0) -- (2,2) -- (0,2) -- (0,0);
    \filldraw[opacity = .15] (0,0) -- (2,0) -- (2,.3) -- (0,.3) -- (0,0);
    \filldraw[opacity = .15] (0,1.7) -- (2,1.7) -- (2,2) -- (0,2) -- (0,1.7);
    \draw[thick] (0,0) -- (2,0) -- (2,2) -- (0,2) -- (0,0);
    \filldraw[white] (1,1) circle (14pt);
    \draw[thick, - stealth] (1.8,1.3) .. controls (1.95,1.4) and (2.05,1.4) .. (2.2,1.3);
    \draw[thick] (1,1) circle (14pt);
\end{scope}
\begin{scope}[shift = {(4,0)}] 
    \filldraw[opacity = .2] (0,0) -- (2,0) -- (2,2) -- (0,2) -- (0,0);
    \filldraw[opacity = .15] (0,0) -- (2,0) -- (2,.3) -- (0,.3) -- (0,0);
    \filldraw[opacity = .15] (0,1.7) -- (2,1.7) -- (2,2) -- (0,2) -- (0,1.7);
    \draw[thick] (0,0) -- (2,0) -- (2,2) -- (0,2) -- (0,0);
    \filldraw[white] (1,1) circle (14pt);
    \draw[thick, - stealth] (1.8,1.3) .. controls (1.95,1.4) and (2.05,1.4) .. (2.2,1.3);
    \draw[thick] (1,1) circle (14pt);
\end{scope}
\begin{scope}[shift = {(6,0)}] 
    \filldraw[opacity = .2] (0,0) -- (2,0) -- (2,2) -- (0,2) -- (0,0);
    \filldraw[opacity = .15] (0,0) -- (2,0) -- (2,.3) -- (0,.3) -- (0,0);
    \filldraw[opacity = .15] (0,1.7) -- (2,1.7) -- (2,2) -- (0,2) -- (0,1.7);
    \draw[thick] (0,0) -- (2,0) -- (2,2) -- (0,2) -- (0,0);
    \filldraw[white] (1,1) circle (14pt);
    \draw[thick, - stealth] (1.8,1.3) .. controls (1.95,1.4) and (2.05,1.4) .. (2.2,1.3);
    \draw[thick] (1,1) circle (14pt);
\end{scope}
\begin{scope}[shift = {(8,0)}] 
    \filldraw[opacity = .2] (0,0) -- (2.2,0) -- (2.2,2) -- (0,2) -- (0,0);
    \filldraw[opacity = .15] (0,0) -- (2.2,0) -- (2.2,.3) -- (0,.3) -- (0,0);
    \filldraw[opacity = .15] (0,1.7) -- (2.2,1.7) -- (2.2,2) -- (0,2) -- (0,1.7);
    \draw[thick] (0,0) -- (2,0) -- (2,2) -- (0,2) -- (0,0);
    \draw[thick] (2,0) -- (2.2,0);
    \draw[thick] (2,2) -- (2.2,2);
    \filldraw[white] (1,1) circle (14pt);
    \draw[thick, - stealth] (1.8,1.3) .. controls (1.95,1.4) and (2.05,1.4) .. (2.2,1.3);
    \draw[thick] (1,1) circle (14pt);
\end{scope}
\node at (11,1) {\large $\cdots \cdots \cdots $};
\begin{scope}[shift = {(0,-1.2)}]
\filldraw[opacity = .3] (0,.3) -- (10.2,.3) -- (10.2,-.3) -- (0,-.3) -- (0,.3);
\draw[thick] (0,0) -- (10.2,0);
\draw[thick] (0,.3) -- (0,-.3);
\draw[thick] (2,0) -- (2,-.3);
\draw[thick] (4,0) -- (4,-.3);
\draw[thick] (6,0) -- (6,-.3);
\draw[thick] (8,0) -- (8,-.3);
\draw[thick] (10,0) -- (10,-.3);
\draw[thick] (1.8,0) -- (1.8,.3);
\draw[thick] (3.65,0) -- (3.65,.3);
\draw[thick] (5.5,0) -- (5.5,.3);
\draw[thick] (7.45,0) -- (7.45,.3);
\draw[thick] (9.4,0) -- (9.4,.3);
\node at (1,-.5) {$2$};
\node at (3,-.5) {$2$};
\node at (5,-.5) {$2$};
\node at (7,-.5) {$2$};
\node at (9,-.5) {$2$};
\node at (.9,.5) {$2t_1$};
\node at (2.8,.5) {$2t_2$};
\node at (4.65,.5) {$2t_3$};
\node at (6.55,.5) {$2t_4$};
\node at (8.5,.5) {$2t_5$};
\node at (-.4,0) {$V$};
\node at (11,0) {\large $\cdots \cdots \cdots $};
\end{scope}
\end{tikzpicture}
\caption{Top: Neighborhood $V$ (without $\Sigma_n$ shown) and action of $F$ outside $V$.  Bottom: Neighborhood of $V$ with parameters illustrated.} \label{Fig:Defining F}
\end{center}
\end{figure}

The neighborhood $V$ is isometric to the region $V_0$ in the strip model of $\hyp$ given by
\[ V_0 = \left\{z \in \mathbb C \mid \tfrac{\pi-\tau}2 \leq \mbox{Im}(z) \leq \tfrac{\pi+\tau}2, \, \mbox{and Re}(z) \geq 0 \right\}\]
for some $\tau>0$.
To describe $F|_V$, we construct a conjugate by the isometry $V \to V_0$, which we denote $F_0 \colon V_0 \to V_0$.
We can write this explicitly as follows. 
The top/bottom of $V_0$ is the set of points $z \in \mathbb C$ of the form $z = x + \tfrac{\pi \pm \tau}2i$ for $x \in [0,\infty)$, respectively.  In order for the definitions of $F$ on $\overline{U \setminus V}$ and on $V$ to agree on the boundary, we need to make sure that it sends $A_{t_k} \cap V$ to $A_{t_{k+1}} \cap V$.  Along the bottom, we can thus define $F_0$ by
\[ F_0\left( x+\tfrac{\pi - \tau}2i\right) = x+2 + \tfrac{\pi - \tau}2i. \]
Along the top, the map has a more complicated formula. To describe it, we first subdivide $[0,\infty)$ at the points 
\[ s_0 = 0, \, s_1=2t_1, \, s_2 = 2(t_1+t_2) , \, \ldots , \, s_k = 2(t_1+\ldots+t_k), \ldots  \]
Then, for $x \in [s_{k-1},s_k]$, we define
\[F_0\left( x + \tfrac{\pi+\tau}2i \right)  = \tfrac{t_{k+1}}{t_k}(x-s_{k-1}) + s_k +\tfrac{\pi+\tau}2i.\]
For all $k$, $F_0$ linearly maps intervals to intervals:
\[ \{x+\tfrac{\pi+\tau}2i \mid x \in [s_{k-1},s_k]\} \to \{x+ \tfrac{\pi+\tau}2i \mid x \in [s_k,s_{k+1}]\}.\]
From the formula above, it is not difficult to see that one can extend $F_0$ over $V_0$ by a quasiconformal map so that $\mu_{F_0}(z) \to 0$ as $\mbox{Re}(z) \to \infty$.  For example, we can subdivide $V_0$ into rectangles $[s_{k-1},s_k] \times \left[ \tfrac{\pi-\tau}2,\tfrac{\pi+\tau}2 \right]$, and for all $z = x+iy$ in this rectangle, define $F_0$ linearly on horizontal segments by linearly interpolating between $F_0$ on the bottom and $F_0$ on the top, as already defined.  The derivative of this map for $z$ in this rectangle is given by
\[ \left( \begin{array}{cc} a(z) & b(z) \\ 0 & 1 \end{array} \right), \]
where $a(z) \to 1$ and $b(z) \to 0$ uniformly as $\mbox{Re}(z) \to \infty$ since $t_k \to 1$ as $k \to \infty$ implies $s_k-s_{k-1} \to 2$ and $\frac{t_{k+1}}{t_k} \to 1$ as $k \to \infty$. In particular, $\mu_{F_0}(z) \to 0$ as $\mbox{Re}(z) \to \infty$, as required.

Since $t_k \to 1$ as $k \to \infty$, we also see that $A_{t_k}$ converges to $A_1$ as $k \to \infty$, so that we can construct $F$ from $A_{t_k} \setminus V \to A_{t_{k+1}} \setminus V$ with Beltrami coefficients tending to $0$, and which linearly scales the metric on the boundary geodesics.  Similarly, since $\Sigma_k$ converges in $\T(S_{1,1})$, we can choose $F$ to send $\Sigma_k \to \Sigma_{k+1}$ with Beltrami coefficient tending to $0$ and also scaling the metric on the boundary geodesic.  Now we just ensure that the $\Sigma_k$ are glued to $W$ so the definition of $F$ on each $\Sigma_k$ agrees with it on $A_{t_k}$.
Thus, $F$ is asymptotically quasiconformal.

The statement that divergence of the series implies that $X$ does not have bounded pieces follows from the fact that the partial sums measure the drift between where the top of $A_{t_k}$ is along the ray relative to the bottom.
\end{proof}

In light of the above examples, we have the following natural question: 

\begin{question}
\label{Q:unbounded pieces}
    For the hyperbolic structures on $X$ from Proposition~\ref{prop:example unbounded pieces} with unbounded pieces, is it true that $[\Mod_0(X):\SH(S)] < \infty$?  Can the construction be carried out for some/all $p \geq 1$ to get $\SHo(S) < \Mod^p(X)$ while still having unbounded pieces?
\end{question}

\printbibliography

\end{document}